\documentclass{amsart}
\pdfoutput=1
\usepackage{amssymb}
\usepackage[dvipsnames]{xcolor}
\usepackage{tikz-cd}
\usepackage{tikz,pgffor}
\usepackage{ifthen}
\usetikzlibrary{calc}
\usepackage[unicode,colorlinks=true,citecolor=Emerald,linkcolor=Fuchsia]{hyperref}
\usepackage{mathtools} 
\usepackage{enumitem}
\usepackage{mathrsfs}
\usepackage{microtype}

\DeclareSymbolFont{altcal}{U}{eus}{m}{n}
\DeclareMathSymbol{\altc}{\mathord}{altcal}{`C}

\newtheorem{theorem}{Theorem}[section]
\newtheorem{proposition}[theorem]{Proposition}
\newtheorem{lemma}[theorem]{Lemma}
\newtheorem{conjecture}[theorem]{Conjecture}
\theoremstyle{definition}
\newtheorem{example}[theorem]{Example}
\newtheorem{definition}[theorem]{Definition}

\theoremstyle{remark}
\newtheorem{remark}[theorem]{Remark}

\numberwithin{equation}{section}

\DeclareMathOperator{\ndomorphisms}{End}
\DeclareMathOperator{\colors}{Col}
\DeclareMathOperator{\aut}{Aut}

\DeclareMathOperator{\id}{id}
\newcommand{\uc}{\underline{c}}
\newcommand{\ud}{\underline{d}}
\newcommand{\cyc}{\mathbf{Cyc}}
\newcommand{\cycne}{\mathbf{Cyc}^{\uparrow}}
\newcommand{\cat}{\mathbf{Cat}}
\newcommand{\operads}{\mathbf{Opd}}
\newcommand{\sset}{\mathbf{sSet}}
\newcommand{\set}{\mathbf{Set}}
\newcommand{\iset}{\mathbf{iSet}}
\newcommand{\comp}[2]{\circ_{#1}^{#2}}
\newcommand{\numberfunctor}{\mathscr{N}}


\newcommand{\slh}[3]{ 
	#1 - #1 * #3 + #2 * #3
}

\newcommand{\curvemod}[7]{
	\ifthenelse
	{#7=0}
	{\curve{#1}{#2}{#3}{#4}{#5}{#6}}
	{\draw[#6] (#2: #3)--(#4:#5);}
}
\newcommand{\curve}[6]{
	\xdef\mycoords{}
	\foreach \step in
	{0,...,#1}
	{\xdef\mycoords{\mycoords 
		(\slh{#2}{#4}{\step/#1} :\slh{#3}{#5}{\step/#1})
	};
	}
	\draw[#6] plot [smooth] coordinates {\mycoords};
}


\title{Dwyer--Kan homotopy theory for cyclic operads}
\author[G.~C.~Drummond-Cole]{Gabriel C. Drummond-Cole}
\thanks{The first author was supported by IBS-R003-D1.}
\email{gabriel.c.drummond.cole@gmail.com}
\author[P.~Hackney]{Philip Hackney}
\thanks{The second author acknowledges the support of Australian Research Council Discovery Project grant DP160101519.}
\address{Department of Mathematics\\ University of Louisiana at Lafayette\\ Lafayette, LA, USA}
\email{philip@phck.net} 
\urladdr{http://phck.net}
\keywords{cyclic operad, Quillen model category, colored operad, adjoint functor, Dwyer--Kan equivalence}
\subjclass[2010]{55P48, 55U35, 18D50, 18D20}

\begin{document}

\begin{abstract}
We introduce a general definition for colored cyclic operads over a symmetric monoidal ground category, which has several appealing features.
The forgetful functor from colored cyclic operads to colored operads has both adjoints, each of which is relatively simple.
Explicit formulae for these adjoints allow us to lift the Cisinski--Moerdijk model structure on the category of colored operads enriched in simplicial sets to the category of colored cyclic operads enriched in simplicial sets.
\end{abstract}

\maketitle

\section{Introduction}
This paper has three main goals:
\begin{enumerate}
\item to establish a general definition for colored cyclic operads,
\item to prove the existence of a Dwyer--Kan type model structure on the category of positive cyclic operads in $\sset$, and
\item to advertise and publicize the existence of an exceptional right adjoint to the forgetful functor from cyclic operads to operads.
\end{enumerate}
We will briefly discuss and contextualize each of these goals and then outline the structure of the remainder of the paper.
\subsection*{Colored cyclic operads}
Colored operads, or symmetric multicategories, are more or less familiar to mathematicians working in operads and in some parts of category theory. 
Cyclic operads are also well-studied by the former group.
However, the notion of colored cyclic operad has appeared less frequently in the literature.
A partial catalog appears in Section~\ref{section: relation to other definitions}, where we compare our definition to other extant definitions.
In our viewpoint, a general definition of colored cyclic operads should have all of the following features:
\begin{enumerate}
\item it should be categorical, i.e., work over all or most symmetric monoidal ground categories,\label{point categorical}
\item it should be symmetric, i.e., allow for an action of the full symmetric group and not only the cyclic group on its operation objects,
\item it should ``allow the empty profile'' corresponding to operations governed by trees with no leaves, and
\item it should be (anti-)involutive, i.e., color matching for the composition operations should be governed by an involution. 
\end{enumerate}
Shulman, in a paper roughly contemporaneous with this one, arrived at a definition equivalent to ours. See \cite[Example 7.7]{Shulman:2CDCPMA} for a comparison of terminology.
Prior to that, the definition in the literature that came closest to achieving these features was that of a `compact symmetric multicategory' from \cite{JoyalKock:FGNTCSM}, which contains our axioms (in $\set$) as a substructure.
The final item in this list means that our color sets are equipped with an involution, which may not match operadic readers' intuition.
This is a familiar idea from the categorical perspective, however.
For instance, the `cyclic multicategories' of \cite{ChengGurskiRiehl:CMMAM} are essentially the non-symmetric strictly positive version of our cyclic operads (see Proposition~\ref{prop: CGR comparison}).

The extra flexibility of this involution allows us to consider many situations on the same footing, including involution-free definitions of colored cyclic operads as well as colored operads and colored dioperads.
This involution also means that every colored cyclic operad has an underlying anti-involutive category, rather than an underlying dagger category.
Thus, in the present paper, we are trying to subsume the homotopy theory of anti-involutive categories and not that of dagger categories, which should be substantially different (see \cite[2.11]{DrummondColeHackney:CFERIQMSAAIIC} for an explanation).

\subsection*{Dwyer--Kan structures}
Dwyer--Kan equivalences were introduced (not under that name) in \cite{DwyerKan:FCHA} as a notion of weak equivalence between simplicially-enriched categories. Bergner \cite{Bergner:MCSCSC} showed that the collection of all simplicially-enriched categories can be endowed with a Quillen model structure whose weak equivalences are the Dwyer--Kan equivalences.
This constitutes a model for $(\infty,1)$-categories (see the survey \cite{Bergner:SI1C} for an overview of commonly used models and equivalences among them).

The Bergner model structure on simplicial categories has been extended in subsequent years to a variety of other settings, each of which can be equipped with a variation of the notion of Dwyer--Kan equivalence, including operads~\cite{CisinskiMoerdijk:DSSO,Robertson:HTSEM,Caviglia:MSECO}, props~\cite{HackneyRobertson:HTSP,Caviglia:DKMSECP}, dioperads and properads~\cite{HackneyRobertsonYau:SMIP}, wheeled operads, wheeled properads \cite{Yau:DKHTAOC}, and others.
All of these settings have something to do with directed graphs.
Missing from this picture were \emph{undirected} settings, such as anti-involutive categories%
\footnote{We note that in 2014 Giovanni Caviglia proposed a model structure for $\mathcal{V}$-enriched \emph{dagger categories} \cite{Caviglia:DC}, which, unfortunately, never appeared in final form.}, cyclic operads, and modular operads.
The first of these was addressed in our prior paper \cite{DrummondColeHackney:CFERIQMSAAIIC}, and in the present paper we utilize similar techniques to establish a Dwyer--Kan type model structure for positive cyclic operads (Theorem~\ref{dk model structure theorem}).

\subsection*{The exceptional right adjoint}
Our proof uses the existence of an unusual right adjoint to the forgetful functor from colored cyclic operads to colored operads. 
The existence of this kind of adjoint was first established in the monochrome case (in $\set$) in Templeton's unpublished thesis~\cite{Templeton:SGO}.
This construction was apparently forgotten until recently (see Remark~\ref{remark: right and left adjoints}).
The existence of this right adjoint allows us to use a criterion of~\cite{DrummondColeHackney:CFERIQMSAAIIC} to quickly prove the main theorem. 
We consider this short proof an application of the existence of this exceptional right adjoint, which we would like to advertise.
In \cite{DrummondColeHackney:CSOT}, we characterize those maps of colored operads so that pullback of algebras admits a right adjoint.
This recovers Templeton's result, but does not suffice to recover the adjoint from the present paper, which is a kind of ``globalized'' version with variable color set.

\subsection*{Organization}
We now give a brief overview of the contents of this paper.
Section~\ref{section cyclic operads} contains our definition of cyclic operads, as well as several examples (which may be safely skipped).
The purpose of Section~\ref{section adjunctions} is to describe both adjoints to the forgetful functor $\cyc \to \operads$.
This section is fundamental to our approach to the proof of the main theorem.
In Section~\ref{section: relation to other definitions}, we explain how our definition is related to various others in the literature.
The main results of the paper do not depend on these relationships, so the reader can freely skip them if she is not already familiar with other definitions of cyclic operads.
That said, Definition~\ref{def cycne} introduces the notion of positive cyclic operads, a major topic of the rest of the paper.
Thus the first page or so of Section~\ref{section: relation to other definitions} should not be skipped.
Section~\ref{section bifibrations} exists primarily to contain Proposition~\ref{cyc is bicomplete}; there are other ways to prove this proposition, but we have chosen to use Grothendieck bifibrations as our tool.
The final section contains the main theorem, its proof, and a few comments about interesting related topics.
\subsection*{Acknowledgments}
The authors would like to thank Alexander Campbell, Richard Garner, Joachim Kock, Jovana Obradovi\'c, Marcy Robertson, and Ben Ward for useful discussions.

\section{Colored operads and cyclic operads}
\label{section cyclic operads}

\subsection{Notation and Conventions}
We will use interval notation $[m,n]$ to denote intervals \emph{of integers}.
Thus we can write the symmetric group $\Sigma_n$ as $\aut([1,n])$, the group of bijections of the set $\{1, 2, \dots, n \}$.
We consider $\Sigma_n$ as a subset of $\Sigma_n^+ \coloneqq \aut([0,n])$ in the natural way, namely as the set of bijections which fix the element $0$.
Denote by $\tau_j \in \Sigma_{j-1}^+$ the automorphism of $[0,j-1]$ defined by $\tau_j(t) = t+1 \mod j$. 
We will often abbreviate $\tau_j$ to $\tau$ to avoid a proliferation of indices.

Let us fix some conventions about lists of colors.
If $\mathfrak{C}$ is a set (called the set of \emph{colors}), then a \emph{profile} $\uc = c_1, \ldots, c_n$ is just an ordered list of elements of $\mathfrak{C}$; write $|\uc| = n$ for the length of the ordered list.
If $\sigma \in \Sigma_n$, then define $\uc \sigma = c_{\sigma(1)}, \dots, c_{\sigma(n)}$ for the permuted list.
Sometimes we want profiles to be indexed from $0$ rather than $1$; if $\uc = c_0, c_1, \ldots, c_n$ we write $\|\uc\| = n.$ If $\sigma \in \Sigma_n^+$ we write $\uc \sigma = c_{\sigma(0)}, c_{\sigma(1)}, \dots, c_{\sigma(n)}$.

Throughout, $\mathcal{V}$ will denote a closed symmetric monoidal category.
For an arbitrary such monoidal category, we will write $\otimes$ for the tensor, $\mathbf{1}_{\mathcal{V}}$ for the tensor unit, and $\multimap$ for the internal hom.
Monadic definitions of (cyclic) operads require at least some colimits to exist in $\mathcal{V}$; this is not necessary for the definitions in this section.

\subsection{Colored operads} 
As above, suppose that $\mathfrak{C}$ is a set.
Given two profiles $\uc = c_1, \dots, c_n$ and $\ud = d_1, \dots, d_m$ in $\mathfrak{C}$, define (for $1\leq i \leq n$)
\[
	(c_1, \ldots, c_n) \comp{i}{} (d_1, \dots, d_m) = c_1, \dots, c_{i-1}, d_1, \dots, d_m, c_{i+1}, \dots, c_n.
\]
A $\mathfrak{C}$-colored operad $P$ in $\mathcal{V}$ (a closed symmetric monoidal category) consists of
\begin{itemize}
	\item a set of objects $P(\uc; c) = P(c_1,\dots, c_n; c) \in \mathcal{V}$ (where $c_i,c\in \mathfrak{C}$)
	\item for each $\sigma\in\Sigma_n$ a map $\sigma^* : P(c_1, \dots, c_n; c) \to P(c_{\sigma(1)}, \dots, c_{\sigma(n)}; c)$ satisfying $(\sigma')^*\sigma^* = (\sigma\sigma')^*$ and $(\id)^*=\id$,
	\item identity elements $\id_c : \mathbf{1}_{\mathcal{V}} \to P(c;c)$, and 
	\item composition operations (defined when $c_i = d$) \[
		\comp{i}{} : P(\uc; c) \otimes P(\ud; d) \to P(\uc \comp{i}{} \ud; c).
	\]
\end{itemize}
Let us abuse notation and write the axioms in terms of elements (see Remark~\ref{remark on elements}):
\begin{itemize}
	\item Associativity is satisfied:
	\[
(x\circ_j y) \circ_i z = \begin{cases}
(x\circ_i z) \circ_{j+|z|-1} y & 1\leq i \leq j-1 \\
x \circ_j (y\circ_{i-j+1} z) &  j\leq i \leq  j+|y| - 1\\
(x\circ_{i-|y|+1} z) \circ_j y & j+ |y| \leq i \leq |x| + |y| - 1
\end{cases}
\]
\item The identity elements are identities, that is, $\id_c \comp{1}{} x = x$ and $x \comp{i}{} \id_c = x$ whenever these compositions are defined
\item $(\sigma_1^*x) \circ_i (\sigma_2^* y) = \sigma^* (x \circ_{\sigma(i)} y)$ whenever $\sigma_1, \sigma_2, \sigma$ are elements of symmetric groups which fit into a diagram
\[ \begin{tikzcd}
{[1,n]\setminus \{i\}} \amalg {[1,m]} 
\rar{\sigma_1 \amalg \sigma_2} 
\dar 
& 
{[1,n]\setminus \{\sigma_1(i)\}} \amalg {[1,m]} 
\dar
\\
{[1,i-1]} \amalg {[1,m]} \amalg {[i+1,n]}
\dar
&
{[1,\sigma_1(i)-1]} \amalg {[1,m]} \amalg {[\sigma_1(i)+1,n]}
\dar
\\
{[1,n+m-1]} 
\rar[swap]{\sigma} 
&
{[1,n+m-1]}
\end{tikzcd}\]
\end{itemize}
of bijections,
where the upper vertical maps rearrange and the lower vertical maps are `order preserving'.

Given a $\mathfrak{C}$-colored operad $P$, we will write $\colors(P)$ for the set $\mathfrak{C}$.
Given two colored operads $P$ and $Q$ (with unspecified sets of colors), a \emph{morphism} from $P$ to $Q$ consists of a function $f : \colors(P) \to \colors(Q)$ and maps 
\[
	P(\uc; c) = P(c_1,\dots, c_n; c) \to Q(fc_1, \dots, fc_n; fc) = Q(f\uc; fc)
\]
in $\mathcal{V}$ which are compatible with all of the structure.
We will write $\operads(\mathcal{V})$ for the category of all colored operads, omitting the parameter when $\mathcal{V}$ is understood from context. 
If $\mathfrak{C}$ is a set, we write $\operads_{\mathfrak{C}}(\mathcal{V})$ for the subcategory whose objects are those operads $P$ with $\colors(P) = \mathfrak{C}$ and maps those which are the identity on colors.
\subsection{Colored cyclic operads}
\label{subsection: colored cyclic operads}
In order to cut down on the number of cases involved in giving a definition of cyclic operads, we will rely on certain helper functions.
Define functions $\alpha_{i,m}^n$ (abbreviated $\alpha_{i,m}$) and $\beta_{j,i}^{m,n}$ (abbreviated $\beta_{j,i}$), for $0\leq i \leq n$ and $0 \leq j \leq m$, as
\begin{align*}
	\alpha_{i,m} = \alpha_{i,m}^n : [0,n] \setminus \{i\} &\to [0,n+m-1] \\
	\beta_{j,i} = \beta_{j,i}^{m,n}: [0,m] \setminus \{j\} &\to [0,n+m-1]
\end{align*}
by
\begin{align*}
	\alpha_{i,m}|_{[0,i-1]} &= (x \mapsto x) & \alpha_{i,m}|_{[i+1,n]} &= (x \mapsto x-1+m) \\
	\beta_{j,i}|_{[0,j-1]} &= (x \mapsto x+m-j+i) & \beta_{j,i}|_{[j+1,m]} &= (x \mapsto x-j-1+i). \\
\end{align*}
The function $\alpha_{i,m} \amalg \beta_{j,i}$ is a bijection.
Notice that $\beta_{0,i} : [1,m] \to [0,n+m-1]$ is given by $x \mapsto x+i-1$ and $\beta_{j,i} = \beta_{0,i} \tau_{m+1}^{-j}$.

Given two profiles, write
\begin{multline*}
(c_0, \ldots, c_n) \comp{i}{j} (d_0, \dots, d_m) = e_0, \dots, e_{m+n-1}
\\ = c_0, \dots, c_{i-1}, d_{j+1}, \dots, d_m, d_0, \dots, d_{j-1}, c_{i+1}, \dots, c_n
\end{multline*}
where 
\begin{equation}\label{equation ep}
	e_p = \begin{cases}
		c_{\alpha_{i,m}^{-1}(p)} & p \notin [i,m+i-1] \\
		d_{\beta_{j,i}^{-1}(p)} & p \in [i,m+i-1]
	\end{cases}
	= \begin{cases}
		c_{\alpha_{i,m}^{-1}(p)} & p \in \tau_{n+m}^{m+i}[0,n-1] \\
		d_{\beta_{j,i}^{-1}(p)} & p \in \tau_{n+m}^i[0,m-1]
	\end{cases}
\end{equation}
When giving a linear order to the natural cyclic order of the list of colors, this formula  prioritizes $\uc$ over $\ud$; the term
\[
	(\uc \comp{i}{j} \ud) \tau^{m-j+i} = d_0, \dots, d_{j-1}, c_{i+1}, \dots, c_n, c_0, \dots, c_{i-1}, d_{j+1}, \dots, d_m
\]
is another natural choice of total order. See, e.g.,~Axiom~\eqref{cyclic swap axiom} of Definition~\ref{definition: cyclic operad} for an asymmetry arising from this choice. Morally, this issue appears because we used linearly ordered (rather than cyclically ordered) lists.

We have the following equalities:
\begin{itemize}
	\item $\uc \comp{i}{0} (d_0, \ud) = \uc \circ_i \ud$.
	\item $\uc \comp{i}{1} (c_i, d) = \uc$ 	\item 
	$(d_j, c) \comp{1}{j} \ud = d_j, d_{j+1}, \dots, d_m, d_0, \dots, d_{j-1} = \ud \tau^j$.
							\end{itemize}

\begin{definition}
\label{definition: ccc}
	A (colored) \emph{collection} in $\mathcal{V}$ consists of the following:
	\begin{itemize}
		\item An involutive set $\mathfrak{C} = \colors(P)$, where we typically write the involution as $c\mapsto c^\dagger$.
		\item A set of objects $P(c_0, c_1, \dots, c_n) \in \mathcal{V}$, one for each (possibly empty) profile in $\mathfrak{C}$.
		\item If $\sigma \in \Sigma_n^+$, maps 
		\[
			\sigma^* : P(\uc) = P(c_0, c_1, \dots, c_n) \to P(c_{\sigma(0)}, \dots, c_{\sigma(n)}) = P(\uc\sigma)
		\]
		satisfying $(\sigma')^*\sigma^* = (\sigma\sigma')^*$ and $(\id)^*=\id$.
	\end{itemize}
	A morphism $P \to Q$ between two collections consists of: 
	\begin{itemize}
		\item an involutive function $f: \colors(P) \to \colors(Q)$, and
		\item for each profile $\uc$ in $\colors(P)$, a morphism
		\[
			f_{\uc} : P(\uc) = P(c_0, c_1, \dots, c_n) \to Q(fc_0, fc_1, \dots, fc_n) = Q(f\uc)
		\]
		in $\mathcal{V}$,
	\end{itemize}
	so that for each profile $\uc$ the diagram
	\[ \begin{tikzcd}
	P(\uc) \rar{\sigma^*} \dar[swap]{f_{\uc}}& P(\uc \sigma) \dar{f_{\uc \sigma}} \\
	Q(f\uc) \rar[swap]{\sigma^*} & Q(f\uc \sigma) 
	\end{tikzcd} \]
	commutes.
\end{definition}

\begin{remark}
\label{remark collections as fibration}
Suppose that $\mathfrak{C}$ is a (possibly involutive) set.
Then there is a groupoid $\mathbf{\Sigma}_{\mathfrak{C}}^+$ whose objects are profiles $\uc = c_0, \dots, c_n$ in $\mathfrak{C}$ (including the empty profile) and whose morphisms are $\sigma : \uc \to \uc\sigma = c_{\sigma(0)}, \dots, c_{\sigma(n)}$ where $\sigma \in \Sigma_n^+$. 
A $\mathfrak{C}$-colored collection is then nothing but a functor $\mathbf{\Sigma}_{\mathfrak{C}}^+ \to \mathcal{V}$.
If $\mathfrak{C} \to \mathfrak{D}$ is a(n involutive) function, then there is an evident functor $\mathbf{\Sigma}_{\mathfrak{C}}^+ \to \mathbf{\Sigma}_{\mathfrak{D}}^+$.
A morphism of collections is precisely a natural transformation of the following form,
\[ \begin{tikzcd}
	\mathbf{\Sigma}_{\mathfrak{C}}^+  \arrow[rr] \arrow[dr, bend right, "P" swap, ""{name=B		}]  & & \mathbf{\Sigma}_{\mathfrak{D}}^+ \arrow[Rightarrow, from=B, "\eta" description] \arrow[dl, bend left, "Q"]\\
		 & \mathcal{V}
\end{tikzcd} \]
and the category of collections has an evident fibration (as in \S\ref{subsection grothendieck fibrations}) to the category of involutive sets $\iset$. The fiber over $\mathfrak{C} \in \iset$ is precisely the category of functors from $\mathbf{\Sigma}_{\mathfrak{C}}^+$ to $\mathcal{V}$.
A similar story could be told with the `restricted' collections that underlie colored operads.
\end{remark}

We now turn to our definition of colored cyclic operad. 
As our cyclic operads are always colored, we will omit the adjective and refer to \emph{monochrome} cyclic operads to refer to the uncolored setting (that is, where the color set $\mathfrak{C}$ is a singleton and the involution is necessarily trivial).
In the monochrome case, we take inspiration more from \cite[Remark II.5.10]{MarklShniderStasheff:OATP}\footnote{The first published places to make this remark precise are the non-skeletal variants which have appeared in \cite[Definition A.2]{Markl:MEOSFTNMO}, \cite[Definition 3.2]{Obradovic:MDCO}, and \cite[Definition 1.1]{ObradovicCurien:FLCO}.} 
than from \cite[Definition 2.1]{GetzlerKapranov:COCH}.
For the axioms, we will use shorthand and refer to \emph{elements} $x \in P(\uc)$ (but see Remark~\ref{remark on elements}) and for such an element $x \in P(\uc)$ we will write $\| x \| = \| \uc \|$.

\begin{definition}
\label{definition: cyclic operad}
	A \emph{cyclic operad} $P$ in $\mathcal{V}$ consists of a collection (also called $P$) as well as
	\begin{itemize}
		\item distinguished elements $\id_c : \mathbf{1}_{\mathcal{V}} \to P(c^\dagger, c)$ for each $c\in \mathfrak{C}$, and
		\item if $\uc = c_0, \dots, c_n$, $\ud = d_0, \dots, d_m$, $0\leq i \leq n$, $0\leq j \leq m$, and $c_i = d_j^\dagger$, a map
		\[
			\comp{i}{j} : P(\uc) \otimes P(\ud) \to P(\uc \comp{i}{j} \ud). 
		\]
	\end{itemize}
	These data should satisfy a short list of axioms:
\begin{enumerate}[label={C.\arabic*}),ref={C.\arabic*}]
	\item\label{cyclic swap axiom} If $\| x \| = n$ and $\| y \| = m$, we have $(\tau^{m-j+i})^*(x \comp{i}{j} y) = y \comp{j}{i} x$.
	\item\label{cyclic associativity axiom} 
	If in addition $\| z \| = p$, $0\le k\le p$, and $k\neq j$, we have
	\[
	 	(x \comp{i}{j} y) \comp{\beta_{j,i}^{m,n}(k)}{\ell} z = x \comp{i}{\alpha_{k,p}^m(j)} (y \comp{k}{\ell} z),
	\] 
	whenever the indicated compositions are defined.
	\item\label{cyclic symmetry axiom} Suppose that $\sigma_1 \in \Sigma_n^+$, $\sigma_2 \in \Sigma_m^+$, and \[ \sigma = (\alpha_{\sigma_1(i),m} \amalg \beta_{\sigma_2(j), \sigma_1(i)}) (\sigma_1 \amalg \sigma_2) (\alpha_{i,m} \amalg \beta_{j,i})^{-1}, \]
that is, $\sigma$ is defined so that the diagram 
\[ \begin{tikzcd}[column sep=3cm]	
{[0,n]\setminus \{i\}} \amalg {[0,m]\setminus \{j\}} \dar[swap]{\sigma_1 \amalg \sigma_2} \rar{\alpha_{i,m} \amalg \beta_{j,i}}
&
{[0,n+m-1]} \dar{\sigma}
\\
{[0,n]\setminus \{\sigma_1(i)\}} \amalg {[0,m]\setminus \{\sigma_2(j)\}} \rar[swap]{\alpha_{\sigma_1(i),m} \amalg \beta_{\sigma_2(j), \sigma_1(i)}}
&
{[0,n+m-1]}
\end{tikzcd}\]
commutes. Then 	 
	$(\sigma_1^*x)\comp{i}{j}(\sigma_2^*y) = \sigma^*(x \comp{\sigma_1(i)}{\sigma_2(j)} y)$.
	\item\label{cyclic identity axiom} $x \comp{i}{1} \id_c = x$ whenever this is defined, and $\tau^* \id_c = \id_{c^\dagger}$.
\end{enumerate}
Morphisms of cyclic operads are precisely the morphisms of the underlying collections which are compatible with the $\id_c$ and the $\comp{i}{j}$.
We denote the resulting category by $\cyc(\mathcal{V})$. We write $\cyc_{\mathfrak{C}}(\mathcal{V})$ for the subcategory consisting of those maps with $\colors(P) = \colors(Q) = \mathfrak{C}$, and $f = \id_{\mathfrak{C}}$. Often we will omit the ground category $\mathcal{V}$ from the notation, and write $\cyc$ instead of $\cyc(\mathcal{V})$.
\end{definition}
The axioms for a cyclic operad may seem inscrutable on first glance, but as in the case of the operad axioms, all have straightforward interpretations in terms of manipulations of grafting of trees. 
Examples are provided in Figures~\ref{figure: cyclic swap axiom}, \ref{figure: cyclic associativity axiom}, \ref{figure: cyclic symmetry axiom}, and \ref{figure: cyclic identity axiom}.
See Section~\ref{section: relation to other definitions} for some discussion of how Definition~\ref{definition: cyclic operad} relates to others in the literature. 
The above definition, when $\mathcal{V} = \set$, is essentially equivalent to the $\ast$-polycategories of \cite{Shulman:2CDCPMA,Hyland:PTA}.


\begin{figure}
\begin{center}
\begin{tikzpicture}
\begin{scope}[shift={(0:1.6)}, xscale= 88/45]
\draw[thin, red](0,0) circle(45pt);
\foreach \innerangle/\outerangle in
{
	121.218/246.702, 
	68.75/180, 
	29.849/113.298, 
	330.151/436.831, 
	-68.75/35.541, 
	238.782/324.459, 
	180.0/283.169 
}
{
	\curvemod{20}{\innerangle}{45pt}{\outerangle}{60pt}{line width=2pt, white}{0}
	\draw[thin, red](\innerangle-10:45pt)arc[radius = 45pt, start angle =\innerangle-10, end angle =\innerangle+10];
	\curvemod{20}{\innerangle}{45pt}{\outerangle}{60pt}{very thin, blue}{0}
}
\draw(0,0) circle(60pt);
\end{scope}

\node[circle, draw, line width=2, inner sep=1pt, minimum size = .43cm] (xctr) at (0,0) {$x$};
\foreach \label/\angle/\endline/\middledist/\outerlabel/\outerdist in 
{
	1/0/1/1.5/{}/1.5,
	0/90/1.35/1.45/0/2.05,
	3/180/1.49/1.6/6/2.6,
	2/270/1.35/1.55/5/2.15
}
{
	\node[circle] at (\angle+10:1) {$\label$};
	\draw (xctr) edge [red, thin] (\angle:\endline);
	\draw[thick] (xctr) edge [line width=2, line cap=round] (\angle:1.25);	
	\node[color=red] at (\angle+7.5:\middledist){$\scriptstyle \outerlabel$};
	\node[color=blue] at (\angle+5:\outerdist){$\scriptstyle \outerlabel$};
}
\node at (0:1.6) {$\comp{1}{3}$};
\begin{scope}[shift={(0:3.2)}]
\node[circle, draw, line width=2, inner sep=1pt, minimum size = .43cm] (yctr) at (0,0) {$y$};
\foreach \label/\angle/\endline/\middledist/\outerlabel/\outerdist in 
{
	3/180/1/1/{}/0/, 
	2/252/1.55/1.62/4/2.25, 
	1/324/1.34/1.55/3/2.35, 
	0/36/1.34/1.47/2/2.25, 
	4/108/1.55/1.85/1/2.4 
}
{
	\node[circle] at (\angle+10:1) {$\label$};
	\draw (yctr) edge [red, thin] (\angle:\endline);
	\draw (yctr) edge [line width=2, line cap=round] (\angle:1.25);		
	\node[color=red] at (\angle+7.5:\middledist){$\scriptstyle \outerlabel$};
	\node[color=blue] at (\angle+5:\outerdist){$\scriptstyle \outerlabel$};
}
\end{scope}
\begin{scope}[yshift = -4.5cm]
\node[circle, draw, line width=2, inner sep=1pt, minimum size = .43cm] (yctr) at (0,0) {$y$};
\foreach \label/\angle/\endline/\outerdist/\outerlabel in 
{
	3/0/1/1/{}, 
	2/72/1.55/1.64/2, 
	1/144/1.34/1.5/1, 
	0/216/1.34/1.47/0, 
	4/288/1.55/1.8/6
}
{
	\node[circle] at (\angle+10:1) {$\label$};
	\draw (yctr) edge [red, thin] (\angle:\endline);
	\draw[thick] (yctr) edge [line width=2, line cap=round] (\angle:1.25);	
	\node[color=red] at (\angle+7.5:\outerdist){$\scriptstyle \outerlabel$};
}

\draw[thin, red] (0:1.6) ellipse(88pt and 45pt);
\node at (0:1.6) {$\comp{3}{1}$};
\begin{scope}[shift={(0:3.2)}]
\node[circle, draw, line width=2, inner sep=1pt, minimum size = .43cm] (xctr) at (0,0) {$x$};
\foreach \label/\angle/\endline/\outerdist/\outerlabel in 
{
	1/180/1/1.5/{},
	0/270/1.36/1.47/5,
	3/0/1.49/1.6/4,
	2/90/1.36/1.53/3
}
{
	\node[circle] at (\angle+10:1) {$\label$};
	\draw (xctr) edge [red, thin] (\angle:\endline);
	\draw[thick] (xctr) edge [line width=2, line cap=round] (\angle:1.25);	
	\node[color=red] at (\angle+7.5:\outerdist){$\scriptstyle \outerlabel$};
}
\end{scope}
\end{scope}
\end{tikzpicture}
\end{center}
\caption{An illustration of the rule~\eqref{cyclic swap axiom}. The expressions $(\tau^2)^*(x\comp{1}{3} y)$ and $y\comp{3}{1} x$ are equal.
}\label{figure: cyclic swap axiom}
\end{figure}
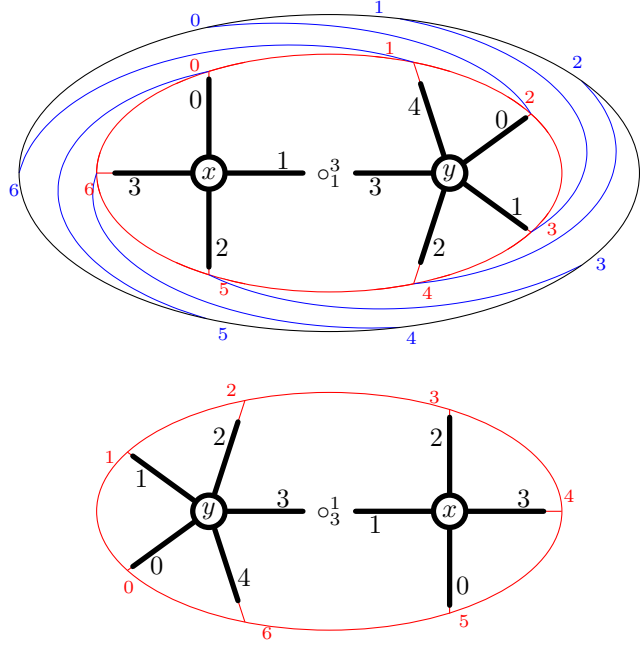


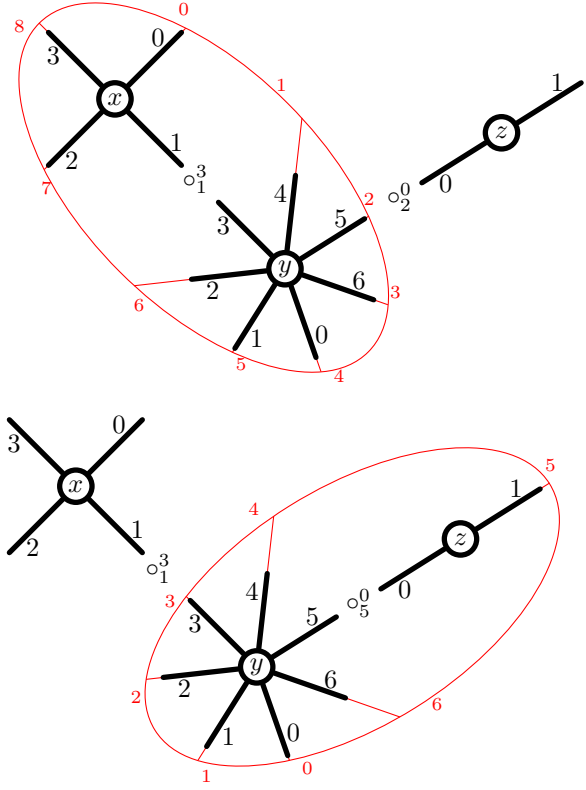
\begin{figure}
\begin{center}
\begin{tikzpicture}
\node[circle, draw, line width=2, inner sep=1pt, minimum size = .43cm] (xctr) at (0,0) {$x$};
\foreach \label/\angle/\endline/\outerdist/\outerlabel in 
{
	0/45/1.33/1.5/0,
	1/315/1/1/{},
	2/225/1.33/1.5/7,
	3/135/1.43/1.6/8
}
{
	\node[circle] at (\angle+10:1) {$\label$};
	\draw (xctr) edge [red, thin] (\angle:\endline);
	\draw[thick] (xctr) edge [line width=2, line cap=round] (\angle:1.25);	
	\node[color=red] at (\angle+7.5:\outerdist){$\scriptstyle \outerlabel$};
}

\node at (1.07,-1.07) {$\comp{1}{3}$};
\draw[rotate around={-45:(1.18,-1.18)}, red, thin] (1.18,-1.18) ellipse(88pt and 45pt);

\begin{scope}[xshift =2.26cm, yshift = -2.26cm]
\node[circle, draw, line width=2, inner sep=1pt, minimum size = .43cm] (yctr) at (0,0) {$y$};
\foreach \label/\angle/\endline/\outerdist/\outerlabel in 
{
	3/135.0/1/1/{}, 
	2/186.429/2.02/2/6, 
	1/237.857/1.32/1.4/5, 
	0/289.286/1.45/1.6/4, 
	6/340.714/1.45/1.5/3, 
	5/392.143/1.32/1.45/2, 
	4/443.571/2.02/2.45/1
}
{
	\node[circle] at (\angle+10:1) {$\label$};
	\draw (yctr) edge [red, thin] (\angle:\endline);
	\draw[thick] (yctr) edge [line width=2, line cap=round] (\angle:1.25);	
	\node[color=red] at (\angle+7.5:\outerdist){$\scriptstyle \outerlabel$};
}

\node at (392.143:1.8) {$\comp{2}{0}$};
\begin{scope}[shift = {(392.143:3.4)}]
\node[circle, draw, line width=2, inner sep=1pt, minimum size = .43cm] (zctr) at (0,0) {$z$};
\foreach \label/\angle in 
{
	0/212.143,
	1/392.143}
{
\node[circle] at (\angle+10:1) {$\label$};
\draw[thick] (zctr) edge [line width=2, line cap=round] (\angle:1.25);	
}
\end{scope}
\end{scope}

\end{tikzpicture}
\begin{tikzpicture}
\node[circle, draw, line width=2, inner sep=1pt, minimum size = .43cm] (xctr) at (0,0) {$x$};
\foreach \label/\angle in 
{
	0/45,
	1/315,
	2/225,
	3/135
}
{
	\node[circle] at (\angle+10:1) {$\label$};
	\draw[thick] (xctr) edge [line width=2, line cap=round] (\angle:1.25);	
}

\node at (-45:1.55) {$\comp{1}{3}$};
\begin{scope}[shift={(-45:3.4)}]
\node[circle, draw, line width=2, inner sep=1pt, minimum size = .43cm] (yctr) at (0,0) {$y$};
\foreach \label/\angle/\endline/\outerdist/\outerlabel in 
{
	3/135.0/1.33/1.45/3,
	2/186.429/1.48/1.65/2,
	1/237.857/1.47/1.6/1,
	0/289.286/1.32/1.5/0,
	6/340.714/2.02/2.45/6,
	5/392.143/1/1.4/{},
	4/443.571/2.02/2.1/4
}
{
	\node[circle] at (\angle+10:1) {$\label$};
	\draw (yctr) edge [red, thin] (\angle:\endline);
	\draw[thick] (yctr) edge [line width=2, line cap=round] (\angle:1.25);	
	\node[color=red] at (\angle+7.5:\outerdist){$\scriptstyle \outerlabel$};
}
\node at (392.143:1.6) {$\comp{5}{0}$};
\draw[rotate around={32:(392.143:1.5)}, red, thin] (392.143:1.5) ellipse(88pt and 45pt);
\begin{scope}[shift = {(392.143:3.2)}]
\node[circle, draw, line width=2, inner sep=1pt, minimum size = .43cm] (zctr) at (0,0) {$z$};
\foreach \label/\angle/\endline/\outerdist/\outerlabel in 
{
	0/212.143/1/1/{},
	1/392.143/1.4/1.55/5
}	
{
	\node[circle] at (\angle+10:1) {$\label$};
	\draw (zctr) edge [red, thin] (\angle:\endline);
	\draw[thick] (zctr) edge [line width=2, line cap=round] (\angle:1.25);	
	\node[color=red] at (\angle+7.5:\outerdist){$\scriptstyle \outerlabel$};
}
\end{scope}
\end{scope}

\end{tikzpicture}
\end{center}
\caption{An illustration of the rule~\eqref{cyclic associativity axiom}. The expressions $(x\comp{1}{3}y)\comp{2}{0} z$ and  $x\comp{1}{3}(y\comp{5}{0} z)$ are equal.}\label{figure: cyclic associativity axiom}
\end{figure}



\begin{figure}
\begin{center}
\begin{tikzpicture}
\node[circle, draw, line width=2, inner sep=1pt, minimum size = .43cm] (xctr) at (0,0) {$x$};
\foreach \label/\angle/\endline/\middledist/\middlelabel/\endlineb/\outerdist/\outerlabel/\outerangle in 
{
	1/0/1/1.5/{}/1/1.5/2/90,
	0/90/1.35/1.45/5/1.9/2/1/180,
	3/180/1.49/1.6/4/2.25/2.35/0/270,
	2/270/1.35/1.55/3/1.9/2.15/3/360
}
{
	\node[circle] at (\angle+10:1) {$\label$};
	\draw[thick] (xctr) edge [line width=2, line cap=round] (\angle:1.25);	
	\node[color=blue] at (\angle+5:2.15){$\scriptstyle \label$};
	\curvemod{20}{\angle}{1.25cm}{\outerangle}{2cm}{very thin, blue}{0}

}

\draw (0,0) circle(2);

\node at (2.5,0) {$\comp{1}{3}$};
\begin{scope}[shift={(0:5)}]
\node[circle, draw, line width=2, inner sep=1pt, minimum size = .43cm] (yctr) at (0,0) {$y$};
\foreach \innerangle/\outerangle in
{	
	36/-108,
	-36/-36,
	252/180,
	180/36,
	108/108
}
{
	\curvemod{20}{\innerangle}{1.25cm}{\outerangle}{2cm}{line width=2, white}{0}
	\curvemod{20}{\innerangle}{1.25cm}{\outerangle}{2cm}{very thin, blue}{0}
}

\draw (0,0) circle(2);

\foreach \label/\angle/\endline/\middledist/\middlelabel/\endlineb/\outerdist/\outerlabel in 
{
	3/180/1/1/{}/0/0/3, 
	2/252/1.55/1.62/4/2.15/2.25/2, 
	1/324/1.34/1.5/3/2/2.17/1, 
	0/36/1.34/1.47/2/2/2.1/0, 
	4/108/1.55/1.85/1/2.15/2.4/6 
}
{
	\node[circle] at (\angle+10:1) {$\label$};
	\draw (yctr) edge [line width=2, line cap=round] (\angle:1.25);	
	\node[color=blue] at (\angle+5:2.15){$\scriptstyle \label$};

}
\end{scope}
\begin{scope}[yshift = -5cm, xshift = .65cm]
\begin{scope}[shift={(0:1.6)}, xscale= 88/45]
\draw[thin, red](0,0) circle(45pt);
\foreach \innerangle/\outerangle/\straighthack in
{
	180.0/246.702/0, 
	121.218/180/0, 
	68.75/76.831/1, 
	29.849/113.298/0, 
	330.151/360-35.541/1, 
	-68.75/35.541/0, 
	238.782/283.169/0 
}
{
	\curvemod{20}{\innerangle}{45pt}{\outerangle}{60pt}{line width=2pt, white}{\straighthack}
	\draw[thin, red](\innerangle-10:45pt)arc[radius = 45pt, start angle =\innerangle-10, end angle =\innerangle+10];
	\curvemod{20}{\innerangle}{45pt}{\outerangle}{60pt}{very thin, blue}{\straighthack}
}
\draw(0,0) circle(60pt);

\end{scope}
\node[circle, draw, line width=2, inner sep=1pt, minimum size = .43cm] (xctr) at (0,0) {$x$};
\foreach \label/\angle/\endline/\middledist/\outerlabel/\outerdist in 
{
	2/0/1/1.5/{}/1.5,
	1/90/1.35/1.45/1/2.05,
	0/180/1.49/1.7/0/2.6,
	3/270/1.35/1.65/6/2.15
}
{
	\node[circle] at (\angle+10:1) {$\label$};
	\draw (xctr) edge [red, thin] (\angle:\endline);
	\draw[thick] (xctr) edge [line width=2, line cap=round] (\angle:1.25);	
	\node[color=red] at (\angle+7.5:\middledist){$\scriptstyle \outerlabel$};
	\node[color=blue] at (\angle+5:\outerdist){$\scriptstyle \outerlabel$};
}
\begin{scope}[shift={(0:3.2)}]
\node[circle, draw, line width=2, inner sep=1pt, minimum size = .43cm] (yctr) at (0,0) {$y$};
\foreach \label/\angle/\endline/\middledist/\outerlabel/\outerdist in 
{
	2/180/1/1/{}/0, 
	1/252/1.55/1.62/5/2.25, 
	0/324/1.34/1.5/4/2.3, 
	4/36/1.34/1.47/3/2.2, 
	3/108/1.55/1.85/2/2.4 
}
{
	\node[circle] at (\angle+10:1) {$\label$};
	\draw (yctr) edge [red, thin] (\angle:\endline);
	\draw (yctr) edge [line width=2, line cap=round] (\angle:1.25);	
	\node[color=red] at (\angle+7.5:\middledist){$\scriptstyle \outerlabel$};
	\node[color=blue] at (\angle+5:\outerdist){$\scriptstyle \outerlabel$};
}
\end{scope}
\node at (0:1.6) {$\comp{2}{2}$};

\end{scope}
\end{tikzpicture}
\end{center}
\caption{An illustration of the rule~\eqref{cyclic symmetry axiom}. The expressions $(\sigma_1^*x)\comp{3}{1} (\sigma_2^*y)$ and $\sigma^*(x\comp{2}{2}y)$ are equal.}\label{figure: cyclic symmetry axiom}
\end{figure}
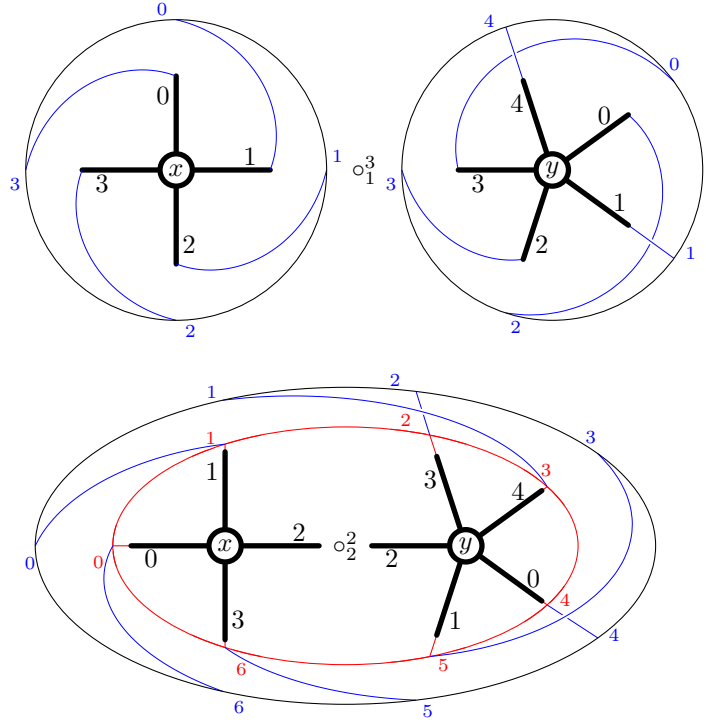



\begin{figure}
\begin{center}
\begin{tikzpicture}
\node[circle, draw, line width=2, inner sep=1pt, minimum size = .43cm] (xctr) at (0,0) {$x$};
\foreach \label/\angle in 
{
	0/120,
	1/0,
	2/240
}	
{
	\node[circle] at (\angle+10:1) {$\label$};
	\draw[thick] (xctr) edge [line width=2, line cap=round] (\angle:1.25);	
}

\node at (1.6,0) {$\comp{1}{0}$};

\begin{scope}[xshift = 3.2cm]
\node[circle, draw, line width=2, inner sep=1pt, minimum size = .43cm] (xctr) at (0,0) {$x$};
\foreach \label/\angle in 
{
	0/180,
	1/0
}	
{
	\node[circle] at (\angle+10:1) {$\label$};
	\draw[thick] (xctr) edge [line width=2, line cap=round] (\angle:1.25);	
}
\end{scope}
\begin{scope}[xshift=6.5cm]
\node[circle, draw, line width=2, inner sep=1pt, minimum size = .43cm] (xctr) at (0,0) {$x$};
\foreach \label/\angle in 
{
	0/120,
	1/0,
	2/240
}	
{
	\node[circle] at (\angle+10:1) {$\label$};
	\draw[thick] (xctr) edge [line width=2, line cap=round] (\angle:1.25);	
}
\end{scope}
\end{tikzpicture}
\end{center}
\caption{An illustration of the rule~\eqref{cyclic identity axiom}. The expressions $x\comp{1}{0}\id_c$ and $x$ are equal.}\label{figure: cyclic identity axiom}
\end{figure}
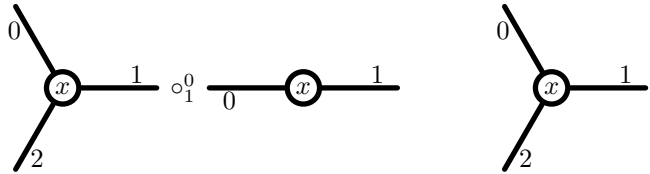


\begin{remark}\label{remark on elements}
In Definition~\ref{definition: cyclic operad}, we have listed all of the axioms as if objects in the symmetric monoidal category $\mathcal{V}$ had underlying sets, as this better represents how we think about cyclic operads.
The reader interested in the general case should have no difficulty in translating these into axioms using the structure maps of $\mathcal{V}$.
This convention could be potentially misleading: in axioms \eqref{cyclic swap axiom} and \eqref{cyclic symmetry axiom}, we use the symmetry morphism of $\mathcal{V}$. 
This could manifest if we are working in, say, the category of dg vector spaces, as \eqref{cyclic swap axiom} should read (using the Koszul sign rule)
\[
	(\tau^{m-j+i})^*(x \comp{i}{j} y) = (-1)^{ab} y \comp{j}{i} x
\] when $x\in P(\uc)_a$ and $y\in P(\ud)_b$.
\end{remark}
\begin{remark}
The involution on colors is essential to our definition. 
Monochrome cyclic operads have a trivial involution and some suggested definitions for (colored) cyclic operads follow this, using color gluing conventions for $\comp{i}{j}$ that amount to restricting our definition to the case of a trivial involution (see Section~\ref{subsec: HRY}).
Allowing general involutions yields a better-behaved category of cyclic operads, as well as better-behaved subsidiary notions.
For instance, in the formalism of \cite{HackneyRobertsonYau:HCO}, the free cyclic operad on a tree is nearly always an infinite object (see Remark 5.3 in loc.\ cit.), while in the present formalism the free cyclic operad on a tree is finite.
\end{remark}

We end this section with several examples.
These may be safely skipped, except perhaps for Example~\ref{example anti-involutive categories} which will be helpful when reading the proof of Theorem~\ref{dk model structure theorem}.

\begin{example}[Pairings]\label{example pairings}
Suppose that $\mathcal{V}$ is a closed symmetric monoidal category containing an initial object as well as objects $A$, $B$, $C$.
If $f : A \otimes B \to C$ is any fixed morphism, then there is a $\{ c, c^\dagger\}$-colored cyclic operad whose underlying collection $P$ is
\[
	P(c_0,\dots, c_n) =\begin{cases}
	A & n=0 \text{ and } c_0 = c \\
	B & n=0 \text{ and } c_0 = c^\dagger \\
	C & n=-1 \\
	\varnothing & \text{otherwise.}
	\end{cases}
\]
The map $\comp{0}{0} : P(c) \otimes P(c^\dagger) \to P(\,\,)$ is just $f$, while $\comp{0}{0} : P(c^\dagger) \otimes P(c) \to P(\,\,)$ precomposes $f$ with the symmetry isomorphism of $\mathcal{V}$. All other $\comp{i}{j}$ are the unique map from the initial object.
Thus $\cyc_{\{ c, c^\dagger\}}(\mathcal{V})$ has as a full subcategory the category of pairings in $\mathcal{V}$.
This in turn has several interesting subcategories:
\begin{itemize}
	\item Taking $C$ to be the tensor unit $\mathbf{1}_{\mathcal{V}}$, we could consider only those pairings $A \otimes B \to \mathbf{1}_{\mathcal{V}}$ whose adjoint $A \to B \multimap \mathbf{1}_{\mathcal{V}}$ is an isomorphism.
	\item Taking $A = B = \varnothing$, we see also that $\mathcal{V}$ is a full subcategory of $\cyc_{\{ c, c^\dagger\}}(\mathcal{V})$.
\end{itemize}
There are other variations, including considering the $\{ c = c^\dagger\}$-colored cyclic operads defined similarly, which accounts for symmetric pairings.
\end{example}

It follows from \eqref{cyclic swap axiom} and \eqref{cyclic identity axiom} that $\id_c \comp{1}{i} x = (\tau^{i})^*x$.
On the other hand, we could drop the identities entirely to get the following.
\begin{definition}
\label{def non-unital markl}
A \emph{non-unital Markl cyclic operad} $P$ in $\mathcal{V}$ consists of a collection (also called $P$) equipped with $\comp{i}{j}$ operations as in Definition~\ref{definition: cyclic operad} satisfying axioms~\eqref{cyclic swap axiom},~\eqref{cyclic associativity axiom}, and~\eqref{cyclic symmetry axiom}.
\end{definition}

We will use this definition only in the following example and in Proposition~\ref{prop: curien obradovich comparison}.

\begin{example}[Endomorphism cyclic operads]
Let $\mathfrak{C}$ be an involutive color set, and for every color $c$ in $\mathfrak{C}$ let $V_c$ be an object of $\mathcal{V}$. 
We write $\underline{V}$ for the collection $\{V_c\}_{c\in \mathfrak{C}}$.
We provide two formalisms to construct an ``endomorphism cyclic operad'' using $\underline{V}$ and pairing data.
\begin{enumerate}
\item First suppose given pairings $(\,,\,)_c:V_c\otimes V_{c^\dagger}\to \mathbf{1}_V$ which are symmetric in the sense that $(\,,\,)_c$ and $(\,,\,)_{c^\dagger}$ are interchanged by the symmetrizer of $\mathcal{V}$. 
Then the \emph{covariant endomorphism cyclic operad} of $\underline{V}$ with respect to these pairings is a non-unital Markl cyclic operad $\ndomorphisms^{\mathrm{co}}(\underline{V})$ which has underlying collection
\[
\ndomorphisms^{\mathrm{co}}(\underline{V})(\underline{c})=
\bigotimes_{k=0}^{\|\underline{c}\|}V_{c_k}.
\]
The $\circ_i^j$ operations are given by applying the appropriate pairing to the specified factors of the monoidal product and rearranging.
\item Instead suppose given copairings  $\Omega^c:\mathbf{1}_{\mathcal{V}}\to V_c\otimes V_{c^\dagger}$ which are symmetric in the sense that $\Omega^c$ and $\Omega^{c^\dagger}$ are interchanged by the symmetrizer of $\mathcal{V}$. 
Then the \emph{contravariant endomorphism cyclic operad} of $\underline{V}$ with respect to these copairings is a non-unital Markl cyclic operad $\ndomorphisms^{\mathrm{contra}}(\underline{V})$ which has underlying collection
\[
\ndomorphisms^{\mathrm{contra}}(\underline{V})(\underline{c})=
\left(\bigotimes_{k=0}^{\|\underline{c}\|}V_{c_k}\right)\multimap \mathbf{1}_\mathcal{V}.
\]
The $\circ_i^j$ operations are given by applying the appropriate copairing to the specified factors of the monoidal product.
\end{enumerate}
If $\mathcal{V}$ is Cartesian closed, then the monoidal unit is terminal so that \begin{enumerate}
\item in the covariant case, the data of the pairings is uniquely determined and the $\circ_i^j$ maps just project away the specified factors, and
\item in the contravariant case, the underlying collection of the endomorphism operad has a terminal object in every profile.
\end{enumerate}
In neither case does the endomorphism cyclic operad have particularly interesting structure. 

In general, if $\mathcal{V}$ is just a closed symmetric monoidal category (such as modules over a ring), the covariant and contravariant endomorphism cyclic operads differ but both can be interesting. 
Only in fairly restricted settings can the two endomorphism cyclic operads be isomorphic.
\end{example}

Every symmetric monoidal category gives rise to a colored operad. 
When the category is equipped with an appropriate dualizing object, this colored operad admits the structure of a cyclic operad, at least morally. 

\begin{example}[$\ast$-autonomous categories]\label{example sac}
A $\ast$-autonomous category (see \cite{Barr:SAC}) is a symmetric monoidal closed category $\mathcal{V}$ which has a global dualizing object $\bot$ so that the adjoint of evaluation
\begin{equation}\label{eq dagger dagger}
	a \to (a \multimap \bot)\multimap \bot
\end{equation}
is an isomorphism for all $a$. 
Writing $a^\dagger := a \multimap \bot$, this is insisting that $(a^\dagger)^\dagger \cong a$. 
A good example is the category of finite dimensional vector spaces over a field $\Bbbk$; in this case, $\bot = \Bbbk$. In fact, any compact closed category is an example of such, with $\bot = \mathbf{1}_{\mathcal{V}}$.

Since a $\ast$-autonomous category is, in particular, a symmetric monoidal category, we already know that $\mathcal{V}$ determines a colored operad $Q$ (in $\set$)\footnote{Alternatively, one may use the internal hom of $\mathcal{V}$ and consider the corresponding operad in $\mathcal{V}$. The discussion that follows will differ only in notation.} with 
\[ Q(a_1, \dots, a_n; a_0) = \mathcal{V} (a_1 \otimes \dots \otimes a_n, a_0)\] 
(using some choice of bracketing of the iterated tensor product, as in \cite[\S 3.3]{Leinster:HOHC}).
Since $a_0 \cong a_0^{\dagger\dagger}$, we have an isomorphism $Q(a_1, \dots, a_n; a_0) \cong \mathcal{V}(a_1 \otimes \dots \otimes a_n \otimes a_0^\dagger; \bot)$.
It is tempting (especially in light of Definition~\ref{definition forgetful functor}) to define a cyclic operad $P$ by
\[
	P(a_0, \dots, a_n) := \mathcal{V}(a_0 \otimes \dots \otimes a_n; \bot).
\]
This works at least in strict\footnote{Note that if \eqref{eq dagger dagger} is not an identity we do not have a strict involution on the set of colors, in contrast with Example~\ref{example anti-involutive categories}.} 
settings (when all structural isomorphisms are identities). 
It would be an interesting problem to make this precise in generality -- currently, it is well-understood how to turn the associated linearly distributive category of a $\ast$-autonomous category into a polycategory \cite{CockettSeely:WDC,CockettSeely:WDC2}.
\end{example}

The previous example extends a relationship between symmetric monoidal categories and operads to the setting of cyclic operads. 
The following example extends a relationship between \emph{ordinary} categories and operads to the setting of cyclic operads.

\begin{example}[Anti-involutive categories]\label{example anti-involutive categories}
An anti-involutive category is a (small) category $\mathcal{C}$ together with a functor $\iota : \mathcal{C}^{op} \to \mathcal{C}$ satisfying $\iota \iota^{op} = \id_{\mathcal{C}}$.
The set of objects, $\mathfrak{C}$, of $\mathcal{C}$ has an involution coming from $\iota$.
Moreover, $\mathcal{C}$ determines a $\mathfrak{C}$-colored cyclic operad whose underlying object is given by
\[
	P(c_0,\dots, c_n) = \begin{cases}
		\mathcal{C}(c_1;\iota c_0) & n=1 \\
		\varnothing & n\neq 1.
	\end{cases}
\]
The $\Sigma_1^+$-action on $P(c_0,c_1)$ sends $f: c_1 \to \iota c_0$ to $\iota f : c_0 \to \iota c_1$.
The composition
\[
	\comp{1}{0} : P(c_0, c_1) \times P(d_0,d_1) \to P(c_0,d_1)
\]
where $c_1 = \iota d_0$ is given by the usual composition $\mathcal{C}(c_1;\iota c_0) \times \mathcal{C}(d_1;\iota d_0) \to \mathcal{C}(d_1;\iota c_0)  $ in $\mathcal{C}$, while the other three compositions are forced by Definition~\ref{definition: cyclic operad}, \eqref{cyclic swap axiom} and \eqref{cyclic symmetry axiom}.
The passage from anti-involutive categories to cyclic operads mirrors the passage from categories to colored operads.
\end{example}

\section{Adjunctions between \texorpdfstring{$\operads$}{Opd} and \texorpdfstring{$\cyc$}{Cyc}}
\label{section adjunctions}

In this section we will describe a forgetful functor from cyclic operads to operads.
Under mild hypotheses on $\mathcal{V}$, this functor admits both a left and right adjoint.
The existence of a left adjoint is common in similar contexts and both the existence and construction of the left adjoint functor should be thought of as variations on a familiar theme.
On the other hand, it is rare for algebraic forgetful functors to admit a right adjoint, and the existence and construction of the right adjoint functor may be unfamiliar, even to experts (see Remark~\ref{remark: right and left adjoints} for some historical discussion of related constructions). 
We will use both adjunctions in Section~\ref{section positive model structure} to apply a criterion of our previous paper~\cite{DrummondColeHackney:CFERIQMSAAIIC}.

We begin by describing how to recover the data of a cyclic operad from a small part of it.

\begin{lemma}\label{lemma cyclic operads circ i zero}
		In any cyclic operad, we have the following equations which define the maps $\comp{i}{j}$ in terms of the maps $\comp{i}{0}$ or $\comp{0}{j}$:
\begin{align*}
x\comp{i}{j} y &= 
x \comp{i}{0} (\tau_{m+1}^j)^*y
\\
&= 
(\tau_{n+m}^{-i})^* \Big[ (\tau_{n+1}^i)^* x \comp{0}{j} y \Big] \\
&=(\tau_{n+m}^{-i})^* \Big[ (\tau_{n+1}^i)^* x \comp{0}{0} (\tau_{m+1}^j)^*y \Big]
\end{align*}
	where $n = \|x \|$ and $m= \| y\|$.
\end{lemma}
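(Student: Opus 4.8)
The plan is to verify each of the three claimed identities directly from the cyclic operad axioms, using the special evaluations of the helper functions $\alpha$, $\beta$, and the reindexing lists computed just before Definition~\ref{definition: ccc}. The key observation driving everything is the identity $\uc \comp{i}{0}(d_0,\ud) = \uc \circ_i \ud$ together with the reformulation $\uc\comp{i}{j}\ud = (\uc\comp{i}{0}(d_0,\dots,d_{j-1},d_{j+1},\dots,d_m,d_j))$ after suitably rotating $\ud$; more precisely, I want to recognize that precomposing with $(\tau_{m+1}^j)^*$ on the second variable rotates $\ud$ so that $d_j$ moves into position~$0$, at which point $\comp{i}{j}$ becomes $\comp{i}{0}$.

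First I would establish the equation $x\comp{i}{j} y = x\comp{i}{0}(\tau_{m+1}^j)^* y$. Write $\ud = d_0,\dots,d_m$ and note $(\tau_{m+1}^j)^*$ sends $P(\ud)$ to $P(\ud\tau^j_{m+1})$ where $\ud\tau^j = d_j, d_{j+1},\dots,d_m,d_0,\dots,d_{j-1}$, so the $0$-th entry of the rotated profile is $d_j$. Apply axiom~\eqref{cyclic symmetry axiom} with $\sigma_1 = \id \in \Sigma_n^+$ and $\sigma_2 = \tau_{m+1}^{-j}$ (so that $\sigma_2^{-1}$ is the rotation carrying $j \mapsto 0$), taking the two-variable composition index on the right side to be $\comp{i}{0}$. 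One then checks that the resulting permutation $\sigma$ on $[0,n+m-1]$ produced by the axiom's commuting square is the identity: this is exactly the content of the relation $\beta_{j,i} = \beta_{0,i}\tau_{m+1}^{-j}$ noted after the definition of the helper functions, combined with $\alpha_{i,m}$ being unchanged since $\sigma_1 = \id$. Hence $(\id^* x)\comp{i}{j}(\sigma_2^* y) = \id^*(x\comp{i}{0} y)$ after re-reading indices, i.e.\ $x\comp{i}{j}(\tau_{m+1}^{-j})^* y = x\comp{i}{0} y$; substituting $y \mapsto (\tau_{m+1}^j)^* y$ and using functoriality $(\tau^{-j})^*(\tau^j)^* = \id$ gives the first identity. (One must be a little careful about the direction of the rotation; I would pin this down using the explicit formula $\beta_{j,i} = \beta_{0,i}\tau_{m+1}^{-j}$ rather than guessing.)

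Next, the identity $x\comp{i}{j} y = (\tau_{n+m}^{-i})^*[(\tau_{n+1}^i)^* x \comp{0}{j} y]$ is proved by the symmetric argument, now taking $\sigma_1 = \tau_{n+1}^{-i}$ and $\sigma_2 = \id$ in axiom~\eqref{cyclic symmetry axiom}, so that the composition index on the first variable moves from $i$ to $0$. This time $\beta$ is unchanged and $\alpha_{i,m}$ transforms under the rotation of $\uc$; the induced $\sigma$ is precisely $\tau_{n+m}^{i}$ (equivalently its inverse, depending on convention), which is why the $(\tau_{n+m}^{-i})^*$ appears on the outside. Again I would verify the shape of $\sigma$ by direct computation with the piecewise formulas for $\alpha_{i,m}$ restricted to $[0,i-1]$ and $[i+1,n]$, rather than by analogy. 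The third identity is then immediate: it is obtained by substituting the first identity (applied to the operad element $(\tau_{n+1}^i)^* x$ and $y$, with composition index $0$ on the left) into the second, i.e.\ $(\tau_{n+1}^i)^* x\comp{0}{j} y = (\tau_{n+1}^i)^* x\comp{0}{0}(\tau_{m+1}^j)^* y$, and no new work is needed.

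The main obstacle is bookkeeping: getting the rotation directions and the precise identification of the induced permutation $\sigma$ in axiom~\eqref{cyclic symmetry axiom} exactly right, since an off-by-sign error in an exponent of $\tau$ would invalidate the formulas. I expect the cleanest route is to lean entirely on the three pre-computed special values ($\uc\comp{i}{0}(d_0,\ud) = \uc\circ_i\ud$, $\uc\comp{i}{1}(c_i,d) = \uc$, and $(d_j,c)\comp{1}{j}\ud = \ud\tau^j$) together with the relation $\beta_{j,i} = \beta_{0,i}\tau_{m+1}^{-j}$, feeding these into \eqref{cyclic symmetry axiom}; everything else is routine verification that two explicit bijections of finite integer intervals agree. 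No step poses a conceptual difficulty once the symmetry axiom is invoked with the right choice of $\sigma_1, \sigma_2$.
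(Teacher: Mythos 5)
Your proposal is correct, and for the first identity it coincides with the paper's argument: apply axiom~\eqref{cyclic symmetry axiom} with $\sigma_1=\id$, $\sigma_2=\tau_{m+1}^{-j}$, observe via $\beta_{j,i}=\beta_{0,i}\tau_{m+1}^{-j}$ that the induced permutation is the identity, then substitute $y\mapsto(\tau_{m+1}^j)^*y$; the third identity is likewise obtained by combining the first two, as in the paper. Where you genuinely diverge is the second identity. The paper does \emph{not} invoke \eqref{cyclic symmetry axiom} again; instead it writes $(\tau^i)^*=(\tau^{j+n})^*(\tau^{m-j+i})^*$ and uses the swap axiom~\eqref{cyclic swap axiom} twice, sandwiching the already-proved first identity, so that the only induced permutation ever computed is the identity. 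You instead apply \eqref{cyclic symmetry axiom} with $\sigma_1=\tau_{n+1}^{-i}$, $\sigma_2=\id$ and identify the induced permutation on $[0,n+m-1]$ explicitly; this does work---checking the four pieces of $\alpha_{i,m}$ and $\beta_{j,i}$ against $\alpha_{0,m}$ and $\beta_{j,0}$ shows the induced permutation is exactly $\tau_{n+m}^{-i}$, which with the contravariance convention $(\sigma')^*\sigma^*=(\sigma\sigma')^*$ yields $x\comp{i}{j}y=(\tau_{n+m}^{-i})^*\bigl[(\tau_{n+1}^i)^*x\comp{0}{j}y\bigr]$ after substituting $x\mapsto(\tau_{n+1}^i)^*x$. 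The trade-off: your route is more uniform (everything reduces to \eqref{cyclic symmetry axiom} plus interval bookkeeping, in the spirit of the computations the paper later records in~\eqref{eq alpha beta tau zero} and~\eqref{eq alpha beta tau nonzero}), while the paper's route trades that bookkeeping for two uses of \eqref{cyclic swap axiom} and a one-line exponent computation in the cyclic group of order $n+m$. One small slip to fix: in your first step the rotation carrying $j\mapsto 0$ is $\sigma_2=\tau_{m+1}^{-j}$ itself, not $\sigma_2^{-1}$; since you actually use the correct $\sigma_2$, this does not affect the argument.
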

\begin{proof}
	As noted near the beginning of Section~\ref{subsection: colored cyclic operads}, $\beta_{j,i} = \beta_{0,i} \tau^{-j}$. Therefore the diagram
	\[ \begin{tikzcd}[column sep=3cm]	
{[0,n]\setminus \{i\}} \amalg {[0,m]\setminus \{j\}} \dar[swap]{\id \amalg \tau^{-j}} \rar{\alpha_{i,m} \amalg \beta_{j,i}}
&
{[0,n+m-1]} \dar{\id}
\\
{[0,n]\setminus \{i\}} \amalg {[0,m]\setminus \{ 0 \}} \rar[swap]{\alpha_{i,m} \amalg \beta_{0, i}}
&
{[0,n+m-1]}
\end{tikzcd}\]
commutes.
Thus, by \eqref{cyclic symmetry axiom} we have
\begin{align*}
	x \comp{i}{j} y &= x \comp{i}{j} \Big[ (\tau^{-j})^* (\tau^j)^* y \Big] \\
	&= x \comp{i}{0} \Big[ (\tau^j)^* y \Big], 
\end{align*}
which was the first statement.

For the second statement, we use use the first statement and \eqref{cyclic swap axiom} twice:
\begin{align*}
	(\tau^i)^* (x \comp{i}{j} y ) &= (\tau^{j+n})^* (\tau^{m-j+i})^*(x \comp{i}{j} y ) \\
	&= (\tau^{j+n})^* (y \comp{j}{i} x) \\
	&= (\tau^{j+n})^* (y \comp{j}{0} (\tau^i)^*x) \\
	&= [(\tau^i)^*x] \comp{0}{j} y.
\end{align*}
The final equality in the statement of the lemma follows from the previous two.
\end{proof}

Lemma~\ref{lemma cyclic operads circ i zero} shows us that the structure of cyclic operad is substantially overdetermined: instead of providing all of the $\comp{i}{j}$, we could just provide the $\comp{i}{0}$.
This provides a connection with the classical definitions of cyclic operads as operads with extra symmetries in each arity.
As we will see in Proposition~\ref{prop: classic cyclic operad comparison}, our cyclic operads are strictly more general than this, even in the monochrome case.
Nevertheless, we can make the following definition.

\begin{definition}[The forgetful functor]\label{definition forgetful functor}
Suppose that $P \in \cyc$.
We define an object $FP \in \operads$ as follows.
First, define $\colors(FP) \coloneqq \colors(P)$ and
\[
	FP(c_1, \dots, c_n; c_0) \coloneqq P(c_0^\dagger, c_1, \dots, c_n).
\]
Now, given biprofiles $(c_1, \dots, c_n; c_0)$ and $(d_1, \dots, d_m; d_0)$ with $d_0 = c_i$, we have 
\[ \begin{tikzcd}
FP(\uc; c_0) \otimes FP(\ud; d_0) \rar{=} \dar[swap, dashed, "\comp{i}{}", color=violet] & P(c_0^\dagger, \uc) \otimes P(d_0^\dagger, \ud) \dar{\comp{i}{0}} \\
FP(\uc \comp{i}{} \ud;c_0) \rar[swap]{=} & P(c_0^\dagger, \uc \comp{i}{} \ud) = P((c_0^\dagger, \uc) \comp{i}{0} (d_0^\dagger, \ud))
\end{tikzcd} \]
The symmetric group actions and the identities are induced from those of $P$.
\end{definition}

\begin{remark}
\label{remark: these cyclic operads have 0-0}
Given a cyclic operad $P$, there are composition operations
\begin{equation}\label{eq zero zero to minus one}
	\comp{0}{0} : P(c^\dagger) \otimes P(c) \to P(\,\,)
\end{equation}
which are not seen by the forgetful functor $F$.
That is, the underlying operad forgets more than just the extra symmetries, it also forgets about the object $P(\,\,)$ and about root-to-root gluing \emph{when both sides have no leaves}.
\end{remark}

We could modify the definition of cyclic operad to exclude the data of the object $P(\,\,)$ and the operations $\comp{0}{0}$ from \eqref{eq zero zero to minus one}. 
This is, of course, equivalent to asking that $P(\,\,)$ is a terminal object (when one exists). 
We will return to this potential modification in Section~\ref{section: relation to other definitions}.

For now, let us discuss the adjoints of $F$. 
As inspiration, consider the inclusion $\iota$ from the trivial group $\{e\}$ into $\Sigma_2$.
This inclusion induces three functors
\begin{equation}\label{cd iset set adjunctions}
\begin{tikzcd}[column sep=large]
		\set^{\Sigma_2} \rar["\iota^*" description]  & \set^{\{e\}} 
	\lar[bend right=20, "\iota_!" swap] 
	\lar[bend left=20, "\iota_*"]  \end{tikzcd}
\end{equation}
with $\iota_! \dashv \iota^* \dashv \iota_*$.
The functor $\iota^* : \iset = \set^{\Sigma_2} \to \set$ just forgets the involution.
The left adjoint is defined by 
\[ \iota_!X = \Sigma_2 \times X \] 
and the right adjoint by 
\[
	\iota_*X = X^{\Sigma_2}.
\]
For convenience, we will write the underlying set of  $\iota_!X$ as 
\[
	\{ x^a \,|\, x \in X, a \in \{0,1\} \}
\]
with $(x^0)^\dagger = x^1$ and the underlying set of $\iota_*X$ as 
$\{ (x_0, x_1) \, | \, x_i \in X \}$
with $(x_0,x_1)^\dagger = (x_1, x_0)$.
The forgetful functor $F: \cyc \to \operads$ lies over $\iota^*$.
We will now show that there are functors $L,R : \operads \to \cyc$ so that $L \dashv F \dashv R$ lies over \eqref{cd iset set adjunctions}.

\subsection{The left adjoint of the forgetful functor}
\label{subsection left adjoint}

Suppose that $P \in \operads$; we will define an object $LP \in \cyc$ as long as the ground category $\mathcal{V}$ has an initial object $\varnothing$.
Set $\colors(LP) = \iota_!\colors(P)$.
If $c_0^{a_0}, c_1^{a_1}, \dots, c_n^{a_n} = \uc^{\underline a}$ is a profile so that there exists a \emph{unique} index $k$ with $a_k = 1$, then define 
\[
	LP(\uc^{\underline a}) \coloneqq P(c_{k+1}, \dots, c_n, c_0, \dots, c_{k-1} ; c_k),
\]
otherwise $LP(\uc^{\underline a}) \coloneqq \varnothing$.
In particular, $LP(\,\,) = \varnothing$ and $LP(\uc^{\underline a}) = LP(\uc^{\underline a}\tau)$.
This leads us naturally to define $\tau^*$ to act as the identity on $LP(\uc^{\underline a})$, and the identity elements come from those in $P$.
To describe the $\comp{i}{j}$ operations, it is sufficient (by Lemma~\ref{lemma cyclic operads circ i zero}) to only define $\comp{0}{0}$.

Consider two profiles $\uc^{\underline a}$ and $\ud^{\underline b}$ so that $c_0^{a_0} = (d_0^{b_0})^\dagger$.
It is easy to define a map 
\begin{equation}\label{eq left adjoint comp ij}
 \comp{0}{0} : LP(\uc^{\underline a}) \otimes LP(\ud^{\underline b}) \to LP(\uc^{\underline a} \comp{0}{0}\ud^{\underline b})
\end{equation}
if either of the tensor factors is $\varnothing$, so we suppose that there is a unique $k_1$ with $a_{k_1} = 1$ and a unique $k_2$ with $b_{k_2} = 1$.
The compatibility condition $c_0^{a_0} = (d_0^{b_0})^\dagger$ implies that exactly one of $k_1, k_2$ is zero.
Compatibility with the symmetry, Definition~\ref{definition: cyclic operad}\eqref{cyclic swap axiom}, allows us to make the definition in just one of the cases, so we suppose that $0 < k_1 \leq n$ and $k_2=0$.
The left hand side of \eqref{eq left adjoint comp ij} is 
\[
	P(c_{k_1+1}, \dots, c_n, c_0, \dots, c_{k_1-1}; c_{k_1}) \otimes P(d_1, \dots, d_m; d_0)
\]
while the right hand side is
\[
	LP(\uc^{\underline a} \comp{0}{0}\ud^{\underline b}) = P(c_{k_1+1}, \dots, c_n, d_1, \dots, d_m, c_1, \dots, c_{k_1-1}; c_{k_1}).
\]
Then define $\comp{0}{0}$ to be the operadic composition $\circ_{n+1-k_1}$.

\begin{proposition}
\label{proposition left adjoint}
Suppose that $\mathcal{V}$ has an initial object.
With the structure from above, $LP$ is a cyclic operad. 
Moreover, the construction above is compatible with the morphisms, so constitutes a functor $L: \operads\to \cyc$.
	The functor $L$ is left adjoint to $F: \cyc \to \operads$.
\end{proposition}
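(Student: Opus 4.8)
The plan is to verify the axioms of Definition~\ref{definition: cyclic operad} for $LP$ and then exhibit the adjunction via unit and counit. For the axioms, the key simplification is that $LP$ is concentrated on profiles $\uc^{\underline a}$ with a unique index $k$ carrying the value $1$, and $\tau^*$ acts as the identity there; consequently the $\Sigma_n^+$-action is pinned down by the pattern of exponents together with the honest $\Sigma_n$-action on $P$. First I would check \eqref{cyclic identity axiom}: $x \comp{i}{1} \id_c = x$ follows from the operadic identity axiom for $P$, and $\tau^*\id_c = \id_{c^\dagger}$ is immediate since $\tau^*$ is the identity and the identity elements of $LP$ are those of $P$ placed in the two possible exponent patterns (which $\tau$ interchanges). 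Then \eqref{cyclic swap axiom}, \eqref{cyclic associativity axiom}, and \eqref{cyclic symmetry axiom}: since all $\comp{i}{j}$ are determined by $\comp{0}{0}$ through the formulas of Lemma~\ref{lemma cyclic operads circ i zero}, it suffices (using that lemma as a \emph{definition} of the $\comp{i}{j}$ and then checking consistency) to verify each axiom after reducing both sides to an expression in $\comp{0}{0}$'s and symmetric-group actions; each such reduced identity unwinds, via the explicit description of $\comp{0}{0}$ as an operadic $\circ_{n+1-k_1}$, to an instance of operadic associativity or operadic equivariance for $P$ (together with bookkeeping of the $\alpha,\beta,\tau$ reindexing functions defined in \S\ref{subsection: colored cyclic operads}). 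Functoriality of $L$ is then routine: a map $f\colon P\to Q$ in $\operads$ induces $\iota_! f$ on colors and the evident maps on each nonempty $LP(\uc^{\underline a})$, compatible with all structure because $f$ was.

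For the adjunction $L\dashv F$ I would construct the unit $\eta_P\colon P \to FLP$ and counit $\varepsilon_Q\colon LFQ \to Q$ and check the triangle identities. Note $FLP$ has color set $\iota^*\iota_!\colors(P)$, which receives the canonical map $c\mapsto c^0$ from $\colors(P)$, and $FLP(c_1,\dots,c_n;c_0) = LP((c_0^0)^\dagger, c_1^0,\dots,c_n^0) = LP(c_0^1,c_1^0,\dots,c_n^0) = P(c_1,\dots,c_n;c_0)$, so $\eta_P$ can be taken to be the identity on each object (and the inclusion on colors); one checks this respects composition and symmetry directly from the definitions of $F$ and $L$. For the counit: given $Q\in\cyc$, a point of $LFQ(\uc^{\underline a})$ is $\varnothing$ unless there is a unique $k$ with $a_k=1$, in which case it is $FQ(c_{k+1},\dots,c_{k-1};c_k) = Q(c_k^\dagger, c_{k+1},\dots,c_{k-1})$; I would define $\varepsilon_Q$ on colors by $c^0\mapsto c$, $c^1\mapsto c^\dagger$, and on objects by the map $Q(c_k^\dagger, c_{k+1},\dots,c_{k-1}) \to Q(c_0,\dots,c_n)$ given by the appropriate power of $\tau^*$ (using that in $Q$ we genuinely have $P(\uc)\cong P(\uc\tau)$, unlike in $LP$). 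The triangle identities then reduce to the statements that $\varepsilon_{LP}\circ L\eta_P = \id_{LP}$ and $F\varepsilon_Q \circ \eta_{FQ} = \id_{FQ}$, each of which is a short check because $\eta$ is essentially an identity and $\varepsilon$ is built from the $\Sigma^+$-action.

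The main obstacle I anticipate is bookkeeping rather than anything conceptual: verifying \eqref{cyclic symmetry axiom} and \eqref{cyclic associativity axiom} for $LP$ requires matching the abstract reindexing bijections $\alpha_{i,m}$, $\beta_{j,i}$, and the powers of $\tau$ appearing in Lemma~\ref{lemma cyclic operads circ i zero} against the concrete cyclic rotations that occur when one rewrites a profile $c_{k+1},\dots,c_n,c_0,\dots,c_{k-1}$ after grafting. It is easy to make sign-of-rotation or off-by-one errors here, so I would organize the computation by first establishing a single lemma identifying, for each pair of exponent patterns, exactly which operadic composition $\circ_\ell$ in $P$ the operation $\comp{i}{j}$ of $LP$ computes, and then feeding that uniformly into the operad axioms for $P$. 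A secondary subtlety worth flagging explicitly is that the well-definedness of $\comp{i}{j}$ via Lemma~\ref{lemma cyclic operads circ i zero} presupposes the lemma's identities hold, so strictly one should either define $\comp{0}{0}$ only and \emph{derive} the rest (taking Lemma~\ref{lemma cyclic operads circ i zero} as the definition), or define all $\comp{i}{j}$ directly and check the lemma's relations; I would take the former route to minimize casework.
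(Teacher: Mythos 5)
Your proposal is correct and follows essentially the same route as the paper's own (sketched) proof: the axioms for $LP$ are reduced to the operad axioms of $P$ (with the trivial $\tau$-action handling the cyclic bookkeeping), and your unit ($d\mapsto d^0$ with identity structure maps) and counit ($c^0\mapsto c$, $c^1\mapsto c^\dagger$ with an appropriate power of $\tau^*$) are exactly the ones the paper uses. Your remark that one should define only $\comp{0}{0}$ and derive the remaining $\comp{i}{j}$ from the formulas of Lemma~\ref{lemma cyclic operads circ i zero} is precisely the convention the paper adopts, so there is no gap.
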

\begin{proof}[Sketch of proof]
The proofs of these statements are both elementary and unpleasant so we only provide a minimal sketch.
\begin{enumerate}
\item Axiom~\eqref{cyclic swap axiom} is more or less trivial because $\tau$ acts trivially and $\circ^j_i$ is more or less symmetric.
\item Axiom~\eqref{cyclic associativity axiom} follows from the operadic associativity axiom.
\item Axiom~\eqref{cyclic symmetry axiom} follows mostly from the operadic symmetry axiom, with a special case coming from the action of $\tau$.
\item Axiom~\eqref{cyclic identity axiom} follows from the operadic identity axiom.
\item The verification of the morphism compatibility requirements is a direct computation.
\item Given a $\mathfrak{D}$-colored operad $O$, the component of the unit at $O$ is supposed to be an operad map $O\to FLO$. 
The operad $FLO$ is $\iota^*\iota_!\mathfrak{D}$-colored and at the level of colors the unit is $d\mapsto d^0$.
At the level of $\mathcal{V}$-objects, the unit is then the identity map from $O(d_1,\ldots, d_n; d_0)$ to
\[
FLO(d_1^0,\ldots,d_n^0;d_0^0)=LO(d_0^1,d_1^0,\ldots,d_n^0)=O(d_1,\ldots,d_n;d_0).
\]
Verifying that this map respects the operad structure amounts to an unfolding of the definitions.
\item Given a $\mathfrak{C}$-colored cyclic operad $P$, the component of the counit at $P$ is supposed to be a cyclic operad map $LFP\to P$.
The cyclic operad $LFP$ is $\iota_!\iota^*\mathfrak{C}$-colored and at the level of colors the counit takes $c^0$ to $c$ and $c^1$ to $c^\dagger$.
At the level of collections, the counit is then $(\tau^{-k})^*$, from
\begin{align*}
LFP(c_1^0,\ldots, c_k^1,\ldots, c_n^0)
&=FP(c_{k+1},\ldots, c_n,c_0,\ldots,c_{k-1};c_k)
\\
&=P(c_k^\dagger,c_{k+1},\ldots, c_n,c_0,\ldots, c_{k-1})
\end{align*}
to $P(c_0,\ldots, c_k^\dagger,\ldots, c_n)$.
Again it is an unfolding of definitions to verify that this map of collections respects the cyclic operad structure
\item It is a direct computation to verify the triangle identities.
\qedhere
\end{enumerate}
\end{proof}

\subsection{The right adjoint of the forgetful functor}
Suppose that $P \in \operads$; we will define an object $RP \in \cyc$ whenever the underlying category $\mathcal{V}$ has finite products.
Set $\colors(RP) \coloneqq  \iota_*\colors(P)$.
It is convenient for the moment to write $c^\bullet = (c^0, c^1)$ for an ordered pair of elements.
Set
\begin{equation}
\label{equation: right adjoint}
	RP\Big(c_0^\bullet, \dots, c_n^\bullet \Big) \coloneqq \prod_{k=0}^n P(c_{k+1}^0, \dots, c_n^0, c_0^0, \dots, c_{k-1}^0; c_k^1).
\end{equation}
Define an identity element \[ \id_{(c,c')} : \mathbf{1}_{\mathcal{V}} \to RP((c',c), (c,c')) = P(c;c) \times P(c';c') \] by $\id_c \times \id_{c'}$.
If $\sigma \in \Sigma_n^+ = \aut([0,n])$, 
define $\sigma_k = \tau^{-\sigma(k)}\sigma\tau^k$.
More explicitly, we can write $\sigma_k(r)\equiv \sigma(k+r)-\sigma(k) \pmod{n+1}$.
Since $\sigma_k (0) = 0$, we regard $\sigma_k$ as an element of $\Sigma_n$.
We then have solid arrows
\[ \begin{tikzcd}
RP\Big(c_0^\bullet, \dots, c_n^\bullet \Big) 
	\dar[swap, "\sigma^*", dashed, color=violet] 
	\rar{\pi_{\sigma(k)}} 
& 
P(c_{{\sigma(k)}+1}^0, \dots, c_{n}^0, c_{0}^0, \dots, c_{{\sigma(k)}-1}^0; c_{\sigma(k)}^1)
\dar{\sigma_k^*} 
\\
RP\Big(c_{\sigma(0)}^\bullet, \dots, c_{\sigma(n)}^\bullet \Big) 
	\rar[swap]{\pi_k} & 
 P(c_{\sigma(k+1)}^0, \dots, c_{\sigma(n)}^0, c_{\sigma(0)}^0, \dots, c_{\sigma(k-1)}^0; c_{\sigma(k)}^1)
\end{tikzcd} \]
with the indicated (co)domains. The map $\sigma^*$ is defined by $\pi_k \sigma^* \coloneqq \sigma_k^* \pi_{\sigma(k)}$.
To see that it acts as indicated, write $d_i=c^0_{\sigma(k)+i}$. 
Then the $i$th entry of $\sigma_k^*(d_1,\ldots, d_n)$ is $d_{\sigma(k+i)-\sigma(k)}=c^0_{\sigma(k+i)}$.

As in the previous section, we will only define the operations $\comp{i}{j}$ when $i=0=j$.
That is, we will define
\[
	\comp{0}{0} : RP( \underline{c^\bullet} ) \otimes RP( \underline{d^\bullet} ) \to RP( \underline{c^\bullet} \comp{0}{0}  \underline{d^\bullet} )
\]
when $(c_0^0,c_0^1) = (d_0^0,d_0^1)^\dagger = (d_0^1,d_0^0)$.
Since 
\[
	\underline{c^\bullet} \comp{0}{0}  \underline{d^\bullet} = d_1^\bullet, \dots, d_m^\bullet, c_1^\bullet, \dots, c_n^\bullet,
\]
the $k$th projection $\pi_k$ goes from $RP( \underline{c^\bullet} \comp{0}{0}  \underline{d^\bullet} )$ to
\[
	\begin{cases}
		P(d_{k+2}^0, \dots, d_m^0, c_1^0, \dots, c_n^0, d_1^0, \dots, d_k^0; d_{k+1}^1) & \text{if } 0 \leq k \leq m-1 \\
		P(c_{k-m+2}^0, \dots, c_n^0, d_1^0, \dots, d_m^0, c_1^0, \dots, c_{k-m}^0; c_{k-m+1}^1) & \text{if } m\leq k \leq n+m-1.
	\end{cases}
\]
The composite $\pi_k (\comp{0}{0})$ is defined by 
\[ \begin{tikzcd} 
RP( \underline{c^\bullet} ) \otimes RP( \underline{d^\bullet} ) \dar{swap} \\
RP( \underline{d^\bullet} ) \otimes RP( \underline{c^\bullet} ) \dar{\pi_{k+1} \otimes \pi_0} \\
P(d_{k+2}^0,\dots,d_m^0,d_0^0,\dots,d_k^0;d_{k+1}^1) \otimes P(c_1^0, \dots, c_n^0; c_0^1) \dar{\circ_{m-k}} \\
P(d_{k+2}^0,\dots,d_m^0,c_1^0, \dots, c_n^0,d_1^0,\dots,d_k^0;d_{k+1}^1)
\end{tikzcd} \]
as long as $0\leq k \leq m-1$, while if $m\leq k \leq n+m-1$ it is defined by
\[ \begin{tikzcd}
RP( \underline{c^\bullet} ) \otimes RP( \underline{d^\bullet} ) \dar{\pi_{k-m+1} \otimes \pi_0} \\
P(c_{k-m+2}^0, \dots, c_n^0, c_0^0, \dots, c_{k-m}^0; c_{k-m+1}^1) \otimes P(d_1^0, \dots, d_m^0; d_0^1) \dar{\circ_{n+m-k}} \\
P(c_{k-m+2}^0, \dots, c_n^0, d_1^0, \dots, d_m^0 ,c_1^0, \dots, c_{k-m}^0; c_{k-m+1}^1).
\end{tikzcd} \]
We have thus defined a morphism $\comp{0}{0} : RP( \underline{c^\bullet} ) \otimes RP( \underline{d^\bullet} ) \to RP( \underline{c^\bullet} \comp{0}{0}  \underline{d^\bullet} )$ in $\mathcal{V}$.

As in the previous section, a lengthy exercise gives the following.
\begin{proposition}
\label{proposition right adjoint}
Suppose that $\mathcal{V}$ admits finite products.
With the structure from above, $RP$ is a cyclic operad.
Moreover, $R$ constitutes a functor $\operads \to \cyc$ which is right adjoint to $F$. 
\end{proposition}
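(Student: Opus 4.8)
The plan is to verify the cyclic operad axioms for $RP$ directly and then construct the adjunction unit and counit, checking the triangle identities. Since $RP$ is built as a product indexed by the "root choices" $k \in [0,n]$, with each factor a value of $P$, all structure maps are specified factorwise via the projections $\pi_k$; thus each axiom reduces to an identity that, after applying a suitable $\pi_k$, becomes a statement purely about the operadic structure of $P$ (associativity, symmetry, or unitality). By Lemma~\ref{lemma cyclic operads circ i zero}, it suffices throughout to work with $\comp{0}{0}$, which considerably shortens the bookkeeping.

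First I would check the $\Sigma_n^+$-action is functorial: the assignment $\sigma \mapsto \{\sigma_k\}$ with $\sigma_k = \tau^{-\sigma(k)}\sigma\tau^k$ must satisfy $(\sigma')_{\bullet}$-compatibility with $\sigma_{\bullet}$, which follows from $(\sigma\sigma')_k = \tau^{-(\sigma\sigma')(k)}\sigma\sigma'\tau^k = (\tau^{-\sigma(\sigma'(k))}\sigma\tau^{\sigma'(k)})(\tau^{-\sigma'(k)}\sigma'\tau^k) = \sigma_{\sigma'(k)}\cdot\sigma'_k$ together with the contravariant functoriality of $(-)^*$ in $P$. Next, for each of \eqref{cyclic swap axiom}, \eqref{cyclic associativity axiom}, \eqref{cyclic symmetry axiom}, \eqref{cyclic identity axiom}, I would postcompose both sides with every $\pi_k$ of the target; using the explicit formulae above for $\pi_k \circ (\comp{0}{0})$ and $\pi_k \circ \sigma^*$, each side becomes an operadic composite of values of $P$, and the required equality is exactly one instance of the corresponding $\operads$-axiom (for \eqref{cyclic associativity axiom}, the three branches of operadic associativity; for \eqref{cyclic symmetry axiom}, equivariance of $\circ_i$ plus the definition of the $\sigma_k$; for \eqref{cyclic identity axiom}, the operadic right-unit law). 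Because the projections are jointly monic, this proves each cyclic operad axiom. Functoriality of $R$ is then immediate since a map $P \to Q$ of operads induces a compatible map on each factor.

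For the adjunction, I would exhibit the counit $\varepsilon_P : FRP \to P$ for $P \in \operads$ and the unit $\eta_Q : Q \to RFQ$ for $Q \in \cyc$. On colors, $\varepsilon$ is $\iota^*\iota_* \mathfrak{C} \to \mathfrak{C}$, $(c^0,c^1)\mapsto c^0$ (the counit of $\iota^* \dashv \iota_*$), and on $\mathcal{V}$-objects it is the projection $\pi_0 : FRP(c_1,\dots,c_n;c_0) = RP((c_0,c_0)^{\dagger}, (c_1,c_1),\dots) \to P(c_1,\dots,c_n;c_0)$ — here I use that $FRP(\uc;c_0) = RP(c_0^{\dagger\bullet}, c_1^{\bullet},\dots,c_n^{\bullet})$ and that the $k=0$ factor is $P(c_1,\dots,c_n;c_0)$. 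The unit $\eta_Q$ on colors is $c \mapsto (c, c^\dagger)$ (the unit of $\iota^* \dashv \iota_*$), and on $\mathcal{V}$-objects the map $Q(c_0,\dots,c_n) \to RFQ((c_0,c_0^\dagger),\dots,(c_n,c_n^\dagger)) = \prod_{k} FQ(\dots;c_k) = \prod_k Q(c_k^\dagger, c_{k+1},\dots,c_{k-1})$ has $k$th component $(\tau^k)^* : Q(\uc) \to Q(\uc\tau^k)$. Checking these are morphisms in the respective categories is an unfolding of definitions: for $\eta$ one uses \eqref{cyclic symmetry axiom} and \eqref{cyclic swap axiom} to see that applying $\tau^k$ and then an operadic composition in $FQ$ agrees with a $\comp{i}{j}$ in $Q$ followed by the appropriate twist; for $\varepsilon$ one uses the very definition of $\comp{0}{0}$ in $RP$ via $\circ$. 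The triangle identities $\varepsilon F \circ F\eta = \mathrm{id}$ and $R\varepsilon \circ \eta R = \mathrm{id}$ then reduce to $\pi_0 \circ (\tau^0)^* = \mathrm{id}$ and a factorwise check that composing $(\tau^k)^*$ with the projection-induced map recovers the identity, which is straightforward.

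The main obstacle is the symmetry axiom \eqref{cyclic symmetry axiom} for $RP$: one must show that the permutation $\sigma$ determined by the commuting square of $\alpha$'s and $\beta$'s interacts correctly with the family $\{\sigma_k\}$ after projecting to each factor, and this requires carefully matching the index arithmetic in $\beta_{j,i} = \beta_{0,i}\tau^{-j}$, the block decompositions $[0,n+m-1] = \tau^{m+i}[0,n-1] \amalg \tau^i[0,m-1]$, and the re-rooting shifts $\tau^{-\sigma(k)}\sigma\tau^k$ — together with the fact that $RP$'s $\comp{0}{0}$ was only defined in "one of the two cases" per \eqref{cyclic swap axiom}, so one must also confirm the swap-compatibility is genuinely well-posed. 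All of this is, as the statement says, a lengthy but mechanical exercise; no new idea beyond bookkeeping is needed once the factorwise reduction is in place.
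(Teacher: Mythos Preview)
Your proposal is correct and follows essentially the same approach as the paper's sketch: the paper likewise describes the unit $\eta_Q$ with $k$th component $(\tau^k)^*$ and color map $c\mapsto(c,c^\dagger)$, and the counit $\varepsilon_O$ given by projection $\pi_0$ with color map $(d^0,d^1)\mapsto d^0$, leaving the axiom verifications and triangle identities as routine unfolding. Your write-up is in fact more detailed than the paper's (e.g.\ the functoriality check $(\sigma\sigma')_k=\sigma_{\sigma'(k)}\sigma'_k$ and the factorwise reduction strategy), though note a small slip in your counit description: the domain $FRP$ has colors in $\iota^*\iota_*\colors(P)$, so its biprofiles are tuples of \emph{pairs} $c_i^\bullet$, not single elements.
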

\begin{proof}[Sketch of proof]
We will give a briefer indication than we did in Proposition~\ref{proposition left adjoint}, simply describing the unit and counit.
Given a $\mathfrak{C}$-colored cyclic operad $P$, the unit is supposed to be a cyclic operad map $P\to RFP$, where the codomain is a $\iota_*\iota^*\mathfrak{C}$-colored cyclic operad. 
At the level of colors, the unit takes $c$ to $(c,c^\dagger)$. 
At the level of collections, the unit is then supposed to be a map from $P(c_0,\ldots, c_n)$ to
\begin{align*}
RFP((c_0,c_0^\dagger),\ldots,(c_n,c_n^\dagger))
&=\prod_{k=0}^n FP(c_{k+1},\ldots, c_n,c_0,\ldots, c_{k-1};c_k^\dagger)
\\
&=\prod_{k=0}^n P(c_k,\ldots,c_n,c_0,\ldots,c_{k-1}).
\end{align*}
The unit then has $(\tau^k)^*$ as its $k$th component.

On the other hand, given a $\mathfrak{D}$-colored operad $O$, the counit is supposed to be an operad map from $FRO$ (which is $\iota^*\iota_*\mathfrak{D}$-colored) to $O$. 
On colors, this map is projection: $(d^0,d^1)\mapsto d^0$. 
The counit map at the level of underlying $\mathcal{V}$-objects has as its domain
\[
FRO(d_1^\bullet,\ldots, d_n^\bullet;d_0^\bullet)=RO((d_0^\bullet)^\dagger,d_1^\bullet,\cdots,d_n^\bullet)
\]
which is a product whose zeroth component is  $O(d_1^0,\ldots, d_n^0;d_0^0)$. 
Then the counit is the projection onto this zeroth factor.
\end{proof}

\begin{remark}
\label{remark: right and left adjoints}
Even in the monochrome case, the right adjoint to the forgetful functor seems little-known and little-studied. It appeared in the unpublished thesis of Templeton~\cite{Templeton:SGO} for monochrome cyclic operads in sets. 
The first published description we are familiar with is the monochrome case in vector spaces or chain complexes, treated in~\cite{Ward:6OFGO}.

We have the impression that the existence and formula for the left adjoint to the forgetful functor was evident to experts from the beginning, although we have not been able to locate any early reference. 
Templeton noted that the existence of such a left adjoint (in the monochrome case in sets) follows from general principles.
\end{remark}

\section{Relation to other definitions: an interlude}
\label{section: relation to other definitions}

In this section we relate Definition~\ref{definition: cyclic operad} to several existing notions of cyclic operad:
\begin{description}
\item[Prop.~\ref{prop: classic cyclic operad comparison}] we describe a full subcategory recovering the original definition of Getzler and Kapranov~\cite{GetzlerKapranov:COCH},
\item[Prop.~\ref{prop: curien obradovich comparison}] we describe a full subcategory of non-unital Markl cyclic operads recovering the more recent variation of Curien and Obradovi\'c~\cite{CurienObradovic:CCO},
\item[Sec.~\ref{subsec: CGR}] we describe a full subcategory of a non-symmetric variant recovering the cyclic multicategories of Cheng--Gurski--Riehl~\cite{ChengGurskiRiehl:CMMAM},
\item[Sec.~\ref{subsec: HRY}] we describe a full subcategory recovering the colored cyclic operads considered by the second author and his collaborators~\cite{HackneyRobertsonYau:HCO}, and
\item[Sec.~\ref{subsec:dioperads}] we describe subcategories recovering various definitions of dioperads.
\end{description}

When comparing to existing definitions in the literature, we often need to restrict to \emph{positive} cyclic operads.
In the following definition we assume that the ground category $\mathcal{V}$ has a terminal object.
\begin{definition}\label{def cycne}
A cyclic operad $P$ is \emph{positive} if $P(\,\,)$ is terminal in $\mathcal{V}$.
We write $\cycne$ for the full subcategory of $\cyc$ consisting of the positive cyclic operads.
\end{definition}
\begin{lemma}
\label{lemma cycne is reflective in cyc}
	The category of positive cyclic operads is a reflective subcategory of cyclic operads. 
\end{lemma}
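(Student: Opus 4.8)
The plan is to construct an explicit left adjoint (reflector) to the inclusion $\cycne \hookrightarrow \cyc$ and verify the universal property directly. Given a cyclic operad $P$, I would define its \emph{positivification} $P^{\mathrm{pos}}$ to agree with $P$ on all profiles of length $\geq 1$ (i.e., $\|\uc\| \geq 0$ with $|\uc| \geq 1$), and to set $P^{\mathrm{pos}}(\,\,) = \ast$, the terminal object. The colors, the symmetric group actions, the identities $\id_c \colon \mathbf{1}_{\mathcal{V}} \to P(c^\dagger, c)$, and all composition operations $\comp{i}{j}$ \emph{except} $\comp{0}{0}\colon P(c^\dagger)\otimes P(c) \to P(\,\,)$ are inherited unchanged from $P$; the operation $\comp{0}{0}\colon P^{\mathrm{pos}}(c^\dagger)\otimes P^{\mathrm{pos}}(c) \to P^{\mathrm{pos}}(\,\,) = \ast$ is forced to be the unique map to the terminal object. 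There is an evident quotient map $P \to P^{\mathrm{pos}}$ which is the identity on colors, the identity on $P(\uc)$ for $|\uc|\geq 1$, and the canonical map $P(\,\,) \to \ast$ in length $-1$.

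The key steps, in order: first, check that $P^{\mathrm{pos}}$ is a well-defined cyclic operad, i.e., that axioms \eqref{cyclic swap axiom}, \eqref{cyclic associativity axiom}, \eqref{cyclic symmetry axiom}, \eqref{cyclic identity axiom} still hold. Most instances are inherited verbatim from $P$; the only instances to worry about are those whose source or target is the length-$(-1)$ object $\ast$, and these hold automatically because any two parallel maps into a terminal object coincide. (One should note that $\comp{0}{0}$ landing in $P(\,\,)$ only ever appears as an outermost operation — it cannot be fed into a further composition, since $P(\,\,)$ has no further $\comp{i}{j}$ out of it except more maps to $\ast$ — so no associativity instance is broken.) Second, check that $P \mapsto P^{\mathrm{pos}}$ is functorial and that $P \to P^{\mathrm{pos}}$ is natural: a map $f\colon P \to Q$ in $\cyc$ induces $f^{\mathrm{pos}}$ agreeing with $f$ above length $-1$ and the unique map $\ast \to \ast$ in length $-1$; naturality of the quotient is immediate. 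Third, verify the universal property: for $Q \in \cycne$ and any map $g \colon P \to Q$ in $\cyc$, the length-$(-1)$ component $g_{()}\colon P(\,\,) \to Q(\,\,)$ factors uniquely through $P(\,\,) \to \ast = Q(\,\,)$ since $Q(\,\,)$ is terminal, and all other components of $g$ are unchanged; hence $g$ factors uniquely as $P \to P^{\mathrm{pos}} \xrightarrow{\bar g} Q$, with $\bar g$ agreeing with $g$ in every length $\geq 1$ and being the unique map $\ast \to \ast$ in length $-1$. The verification that $\bar g$ is a morphism of cyclic operads again only requires checking compatibility with $\comp{0}{0}$ into length $-1$, which is automatic. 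This establishes the adjunction $(-)^{\mathrm{pos}} \dashv (\text{inclusion})$ with the inclusion fully faithful, so $\cycne$ is reflective.

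The step I expect to require the most care is confirming that no cyclic operad axiom links the length-$(-1)$ object to higher-length objects in a way that is \emph{not} automatically satisfied once we collapse $P(\,\,)$ to $\ast$. Concretely: in axiom \eqref{cyclic associativity axiom}, could $(x \comp{i}{j} y)$ have length $-1$ while appearing as the left input of a further $\comp{\cdot}{\cdot}$? No — if $\|x\comp{i}{j}y\| = -1$ then $n = m = 0$ and the composite itself lies in $P(\,\,)$, which only maps to $\ast$; there is no $z$ of length $\geq -1$ that can be composed \emph{after} it in a way producing a new constraint (the relevant associativity instances require the inner composite to have an available slot, which $P(\,\,)$ lacks beyond the forced maps to $\ast$). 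Similarly axiom \eqref{cyclic swap axiom} relating $(x\comp{i}{j}y)$ and $(y\comp{j}{i}x)$ when both have length $-1$ is just an equality of maps into $\ast$, hence trivial. Once one is confident that every axiom instance touching length $-1$ is either inherited from $P$ or valued in a terminal object, the rest is routine bookkeeping.
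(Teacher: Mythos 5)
Your proposal is correct and follows essentially the same route as the paper: replace the empty-profile component by the terminal object, inherit all remaining structure (with $\comp{0}{0}$ into the empty profile forced by terminality), and verify by inspection that this is functorial and left adjoint to the inclusion. The extra care you take in checking that no axiom instance links $P(\,\,)$ to higher profiles nontrivially is exactly the content the paper leaves implicit, and your analysis of it is accurate.
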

\begin{proof}
Given a cyclic operad, construct a positive cyclic operad with the same color set and the same operation objects except in the empty profile.
Induce all data except the composition to the empty profile (which is uniquely determined by terminality).
This construction is functorial and by inspection left adjoint to the inclusion functor. 
\end{proof}
We write $G: \cyc \to \cycne$ for the left adjoint reflector and $I: \cycne \to \cyc$ for the inclusion. 
Explicitly, we have that $\colors(GP) = \colors(P)$ and $GP(c_0,\dots,c_n) = P(c_0,\dots,c_n)$ for $n\geq 0$.

\begin{remark}
\label{remark cycne without terminal}
One can still define a sensible category $\cycne(\mathcal{V})$ when $\mathcal{V}$ does not have a terminal object, modifying Definition~\ref{definition: cyclic operad} to exclude the operations $\comp{0}{0} : P(c^\dagger) \otimes P(c) \to P(\,\,)$.
The functor $G: \cyc(\mathcal{V}) \to \cycne(\mathcal{V})$ will even still exist without this hypothesis, though the inclusion functor $I$ will not.
\end{remark}

Now, if $P \in \operads$, then $RP$ is a positive cyclic operad, while $LP$ is not. 
The cyclic operad $LP$ has the property that if $LP(c^a) \neq \varnothing$, then $LP((c^a)^\dagger) = \varnothing$.
This implies that the domain of $\comp{0}{0} : LP(c^a) \otimes LP((c^a)^\dagger) \to LP(\,\,)$ is always $\varnothing$, so $LP$ can be recovered from $GLP$ simply by replacing $*$ by $\varnothing$.

The cyclic operads involved in Propositions~\ref{prop: classic cyclic operad comparison},~\ref{prop: CGR comparison}, and~\ref{prop: HRY comparison} all have underlying \emph{operads}, so can't have any data in the empty profile level.
This is the reason why all three propositions deal with \emph{positive} cyclic operads.

The proofs of the propositions in the remainder of this section follow from a combination of
\begin{enumerate}
	\item concrete unpacking of the definitions in restricted situations, and 
	\item colored and ground-category independent versions of the interpolation between ``entries-only'' and ``exchangeable-output'' type definitions given explicitly in $\set$ in the monochrome case in~\cite{Obradovic:MDCO}.
\end{enumerate}
These comparisons are not related to the main thrust of the paper and are included primarily for the reader's convenience. Consequently proofs will be omitted.

\begin{proposition}
	\label{prop: classic cyclic operad comparison}
	The category of monochrome positive cyclic operads is isomorphic to the category of cyclic operads of \cite{GetzlerKapranov:COCH}.
\end{proposition}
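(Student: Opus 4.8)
The plan is to exhibit an isomorphism of categories by constructing a functor $\Phi$ from monochrome positive cyclic operads to the cyclic operads of~\cite{GetzlerKapranov:COCH}, a functor $\Psi$ in the reverse direction, and checking $\Phi\Psi = \id$ and $\Psi\Phi = \id$. Throughout we use that in the monochrome case the single color $*$ has trivial involution, so the bookkeeping with $(-)^\dagger$ disappears, and that positivity (Definition~\ref{def cycne}) forces $P(\,\,)$ to be terminal, hence to carry no data; this last point is what makes the arity $-1$ discrepancy between the two notions vanish.

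For $\Phi$: given a monochrome positive cyclic operad $P$, its underlying operad is $FP$ (Definition~\ref{definition forgetful functor}), and we write $\mathcal{P}(n) \coloneqq FP(*,\dots,*;*) = P(*,\dots,*)$, the value on a profile with $n+1$ entries. The $\Sigma_n^+ = \aut([0,n]) \cong \Sigma_{n+1}$-action on the collection $P$ gives an action of $\Sigma_{n+1}$ on $\mathcal{P}(n)$, and by the very definition of $F$ this restricts along the inclusion $\Sigma_n \hookrightarrow \Sigma_{n+1}$ (the subgroup fixing $0$) to the operadic $\Sigma_n$-action. To see that $(FP, \{\Sigma_{n+1}\text{-actions}\})$ is a cyclic operad in the sense of~\cite{GetzlerKapranov:COCH}, one must check the compatibility between the cyclic generator $\tau = \tau_{n+1}$ and the operadic compositions $\circ_i$; since Definition~\ref{definition forgetful functor} identifies $\circ_i$ on $FP$ with the operation $\comp{i}{0}$ of $P$, these are immediate instances of axioms~\eqref{cyclic symmetry axiom} and~\eqref{cyclic swap axiom} (take $\sigma_1 = \tau$, $\sigma_2 = \id$, and read off the resulting $\sigma$). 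A morphism of positive cyclic operads is a morphism of underlying collections commuting with the $\comp{i}{j}$; applying $F$ and recalling that it is the identity on the color $*$, this is exactly a morphism of underlying operads commuting with the cyclic generators. Hence $\Phi$ is a well-defined functor.

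For $\Psi$: given a Getzler--Kapranov cyclic operad $(\mathcal{P}, \{\tau_{n+1}\})$, set $\colors(\Psi\mathcal{P}) = \{*\}$, let $\Psi\mathcal{P}(*,\dots,*) \coloneqq \mathcal{P}(n)$ on a profile with $n+1$ entries ($n \geq 0$), and let $\Psi\mathcal{P}(\,\,)$ be the terminal object of $\mathcal{V}$. The $\Sigma_n^+$-action is the given $\Sigma_{n+1}$-action, and $\id_* \colon \mathbf{1}_{\mathcal{V}} \to \Psi\mathcal{P}(*,*)$ is the operadic unit. By Lemma~\ref{lemma cyclic operads circ i zero} it suffices to specify the operations $\comp{i}{0}$: declare $\comp{i}{0}$ for $i \geq 1$ to be operadic $\circ_i$ (matching Definition~\ref{definition forgetful functor}), and $\comp{0}{0} \colon \Psi\mathcal{P}(*) \otimes \Psi\mathcal{P}(*) \to \Psi\mathcal{P}(\,\,)$ to be the unique map to the terminal object; root-to-root gluing with legs present is then forced by Lemma~\ref{lemma cyclic operads circ i zero}, which rewrites it as an operadic $\circ_i$ after rotating by a power of $\tau$. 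What remains is to verify that this data satisfies axioms~\eqref{cyclic swap axiom}--\eqref{cyclic identity axiom} and that the rotated expressions for the $\comp{i}{j}$ are independent of the chosen rotations: \eqref{cyclic identity axiom} is the operadic unit law, while \eqref{cyclic associativity axiom} and the nontrivial cases of~\eqref{cyclic symmetry axiom} and~\eqref{cyclic swap axiom} unwind into operadic associativity and equivariance together with the Getzler--Kapranov compatibility axioms. This is precisely the colored- and ground-category-independent form of the interpolation between the ``exchangeable-output'' and ``entries-only'' presentations carried out in $\set$, monochrome, in~\cite{Obradovic:MDCO}; it is the only real obstacle, and it is purely bookkeeping — once the definitions are unwound each axiom becomes a tree manipulation already licensed by the operad and cyclic axioms. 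Finally, $\Phi\Psi = \id$ and $\Psi\Phi = \id$ hold on objects because $F$ reads off exactly $P(*,\dots,*) = \mathcal{P}(n)$, the $\Sigma_n$-actions, and $\circ_i = \comp{i}{0}$, and because a positive cyclic operad is recovered from $FP$ together with the operators $\tau^*$ (its empty profile being terminal by positivity); both equalities extend to morphisms by the identifications above, yielding the asserted isomorphism of categories.
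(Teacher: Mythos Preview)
Your proposal is correct and follows exactly the approach the paper indicates: the paper in fact omits the proof entirely, saying only that it follows from ``concrete unpacking of the definitions in restricted situations'' together with the entries-only/exchangeable-output interpolation of \cite{Obradovic:MDCO}. Your sketch supplies precisely these omitted details---using $F$ from Definition~\ref{definition forgetful functor} for one direction, Lemma~\ref{lemma cyclic operads circ i zero} to reconstruct the $\comp{i}{j}$ for the other, and deferring the residual axiom verification to the Obradovi\'c interpolation---so it is both consonant with and more explicit than the paper's own treatment.
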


\begin{proposition}
The category of monochrome cyclic operads is isomorphic to the category of augmented cyclic operads of \cite{HinichVaintrob:COACD}.
\end{proposition}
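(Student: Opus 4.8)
The plan is to rerun the reduction behind Proposition~\ref{prop: classic cyclic operad comparison}, this time \emph{without} positivity, and to match the residual structure with Hinich--Vaintrob's augmentation. Recall from Remark~\ref{remark: these cyclic operads have 0-0} that the only structure of a monochrome cyclic operad $P$ which is discarded by its positive reflection $GP$ (Lemma~\ref{lemma cycne is reflective in cyc}) is the object $P(\,\,)$ together with the operations $\comp{0}{0}\colon P(c^\dagger)\otimes P(c)\to P(\,\,)$ of~\eqref{eq zero zero to minus one}: indeed every $\comp{i}{j}$ whose target has positive leg-count already lives in $GP$, the target of $\comp{i}{j}$ has leg-count zero only when both inputs are one-legged (forcing $i=j=0$), there is no $\comp{i}{j}$ having $P(\,\,)$ as an input, and $\Sigma^+_{-1}$ is trivial. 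Since the involution is trivial in the monochrome case, this residual structure is just an object $F\coloneqq P(\,\,)$ of $\mathcal V$ and a map $\mu\coloneqq \comp{0}{0}\colon E^{\otimes 2}\to F$, where $E\coloneqq P(c)$ is the one-leg part (which carries no symmetric group action, $\Sigma^+_0$ being trivial). Thus a monochrome cyclic operad is exactly a triple $(Q,F,\mu)$ with $Q$ a monochrome positive cyclic operad, $F\in\mathcal V$, and $\mu\colon Q(c)^{\otimes 2}\to F$, subject only to the instances of the axioms of Definition~\ref{definition: cyclic operad} that mention $\comp{0}{0}$.

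I would then enumerate those instances. Axiom~\eqref{cyclic swap axiom} with $\|x\|=\|y\|=0$ reduces (using $\tau^0=\id$) to $x\comp{0}{0}y=y\comp{0}{0}x$, i.e.\ $\mu$ is symmetric in $\mathcal V$ (with the Koszul sign in the graded case, cf.\ Remark~\ref{remark on elements}). Axiom~\eqref{cyclic symmetry axiom} is vacuous whenever $\comp{0}{0}$ appears, since the relevant symmetric groups are all trivial. Axiom~\eqref{cyclic associativity axiom}, in the finitely many configurations in which $\comp{0}{0}$ occurs --- necessarily as the \emph{outer} composition, the three factors then having one, two, and one legs --- records exactly the compatibility of $\mu$ with a single operadic composition of $Q$. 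Axiom~\eqref{cyclic identity axiom} contributes nothing, as $\id_c$ is two-legged and hence never an input of a $\comp{0}{0}$. So the extra data $(F,\mu)$ amounts to a symmetric pairing of the one-legged operations of $Q$, valued in an arbitrary object and compatible with composition.

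Finally I would recall the definition of augmented cyclic operad from~\cite{HinichVaintrob:COACD}, rewrite it over an arbitrary closed symmetric monoidal $\mathcal V$ in the routine way, and observe that its content is precisely a Getzler--Kapranov cyclic operad --- which Proposition~\ref{prop: classic cyclic operad comparison} identifies with a monochrome positive cyclic operad via the colored, ground-category independent form of the ``entries-only''/``exchangeable-output'' interpolation of~\cite{Obradovic:MDCO} --- together with an augmentation object and a symmetric, composition-compatible gluing of its one-leg part into that object, i.e.\ exactly a triple $(Q,F,\mu)$ as above. A morphism on either side being a morphism of the underlying objects respecting all of this data, the bijection on objects upgrades to an isomorphism of categories. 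The one point requiring care is bookkeeping rather than mathematics: pinning down Hinich--Vaintrob's precise form of the augmentation axioms and checking that the very few instances of~\eqref{cyclic associativity axiom} involving $\comp{0}{0}$ encode neither more nor less than compatibility of $\mu$ with composition. As elsewhere in this section, no idea beyond the unpacking already used for Proposition~\ref{prop: classic cyclic operad comparison} is needed, so the routine verifications would be omitted.
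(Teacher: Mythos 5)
Your proposal is correct and follows essentially the route the paper intends: the paper omits the proof as a routine ``concrete unpacking'' (combined with the entries-only/exchangeable-output interpolation already invoked for Proposition~\ref{prop: classic cyclic operad comparison}), and your argument carries out exactly that unpacking, correctly identifying the residual data as $P(\,\,)$ together with the symmetric pairing $\comp{0}{0}\colon P(c)\otimes P(c)\to P(\,\,)$ and correctly isolating the only nontrivial residual axiom instances (symmetry from \eqref{cyclic swap axiom} and the invariance instances of \eqref{cyclic associativity axiom} with leg counts one, two, one). The only remaining work is the bookkeeping you already flag, namely matching this presentation against Hinich--Vaintrob's precise augmentation axioms.
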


We also have the following comparison between our non-unital Markl cyclic operads and one of the definitions of cyclic operad from \cite{CurienObradovic:CCO}.

\begin{proposition}
\label{prop: curien obradovich comparison}
Suppose $\mathcal{V} = \set$ and $\mathfrak{C} = \{c\}$ is a one-element set (the monochrome case).
Then Definition~\ref{def non-unital markl} recovers the \emph{skeletal entries-only cyclic operads} of \cite[4.2.1]{CurienObradovic:CCO}\footnote{We note that the `$j+p-1$' appearing as an index in their {\tt (sA1)} should actually be `$j+p-2$'.}.
Further restricting Definition~\ref{def non-unital markl} to those $P$ satisfying $P(c) = P(\,\,) = \varnothing$, we recover their \emph{constant-free} skeletal entries-only cyclic operads (see the discussion in op.\ cit.\ following Definition 5).
In either case, their $\altc(m)$ corresponds to our $P(\underbrace{c,\ldots,c}_m)$.
\end{proposition}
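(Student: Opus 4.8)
The plan is to fix an explicit dictionary between the two sets of data and then check that structures and axioms correspond termwise. Unwinding Definition~\ref{def non-unital markl} in the monochrome case over $\set$, such a $P$ is a sequence of sets $P(\underbrace{c,\ldots,c}_{m})$, each carrying an action of $\aut([0,m-1])$, equipped with composition maps $\comp{i}{j}\colon P(c^{n+1})\times P(c^{m+1})\to P(c^{n+m})$ for $0\le i\le n$ and $0\le j\le m$, subject only to \eqref{cyclic swap axiom}, \eqref{cyclic associativity axiom}, and \eqref{cyclic symmetry axiom}. Under the renaming $\altc(m):=P(\underbrace{c,\ldots,c}_{m})$, and the standard identification of $\aut([0,m-1])$ with the symmetric group on $m$ letters, this $\Sigma$-module structure matches that of \cite[4.2.1]{CurienObradovic:CCO}; reindexing by $p=n+1$, $q=m+1$ and shifting the composition indices by one turns the $\comp{i}{j}$ into a family of partial operations ${}_i\circ_j\colon\altc(p)\times\altc(q)\to\altc(p+q-2)$ with the expected index ranges. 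On morphisms, a map of collections compatible with all $\comp{i}{j}$ is exactly a sequence of equivariant maps commuting with the composites, i.e.\ a morphism in the sense of \cite{CurienObradovic:CCO}.

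With this dictionary fixed, I would first observe that the bijection $\alpha_{i,m}\amalg\beta_{j,i}\colon([0,n]\setminus\{i\})\amalg([0,m]\setminus\{j\})\to[0,n+m-1]$ describes exactly the relabelling of legs effected by ${}_i\circ_j$, so that the codomain $\altc(p+q-2)$ is produced correctly, and then match the three axioms: \eqref{cyclic swap axiom} against the commutativity relation between ${}_i\circ_j$ and ${}_j\circ_i$, the rotation $\tau^{m-j+i}$ being precisely the permutation by which the two linearizations of the cyclic leg-order differ; \eqref{cyclic associativity axiom} against the sequential associativity axiom {\tt (sA1)} --- this is where the index typo flagged in the footnote intervenes, and with the correction `$j+p-2$' the two identities agree, using that $\beta_{j,i}(k)$ and $\alpha_{k,p}(j)$ track the slots of $y$ in the two bracketings --- together with any remaining associativity instances, which reduce to {\tt (sA1)} by applying \eqref{cyclic swap axiom}; and \eqref{cyclic symmetry axiom} against equivariance, using that the $\sigma$ displayed there is, by construction, the permutation of the $p+q-2$ legs induced by permuting the legs of the two factors. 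Reading the same equations backwards converts a skeletal entries-only cyclic operad into a non-unital Markl cyclic operad, and the two assignments are visibly mutually inverse and functorial, giving the isomorphism of categories.

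For the final sentence of the proposition, the dictionary gives $\altc(1)=P(c)$ and $\altc(0)=P(\,\,)$, so imposing $P(c)=P(\,\,)=\varnothing$ is literally Curien--Obradovi\'c's constant-free condition; since this condition is phrased on sets identified by the dictionary, the isomorphism of categories restricts to the constant-free subcategories, with $\altc(m)$ again corresponding to $P(\underbrace{c,\ldots,c}_{m})$.

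I expect the only real obstacle to be bookkeeping: our $0$-indexed conventions and the helper functions $\alpha$, $\beta$, and $\tau$ must be matched against the (typically $1$-indexed) conventions of \cite{CurienObradovic:CCO}, and the three axioms must be seen to coincide on the nose --- in particular one must confirm that the `$j+p-1$'/`$j+p-2$' discrepancy in {\tt (sA1)} is a typo rather than a genuine difference. There is no conceptual difficulty; all the content lies in making the leg-relabelling combinatorics line up.
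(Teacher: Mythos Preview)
The paper does not actually give a proof of this proposition: it explicitly omits all proofs in Section~\ref{section: relation to other definitions}, stating that they follow from ``concrete unpacking of the definitions in restricted situations'' together with the entries-only/exchangeable-output interpolation of \cite{Obradovic:MDCO}. Your proposal is precisely that concrete unpacking --- a dictionary on underlying data followed by an axiom-by-axiom match --- and so is in complete agreement with what the paper has in mind; there is nothing further to compare.
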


\subsection{Small cyclic multicategories}
\label{subsec: CGR}

For our next comparison, to \cite{ChengGurskiRiehl:CMMAM}, we will introduce a non-$\Sigma$ variant of cyclic operads.
We will examine a specific case of the symmetry axiom~\eqref{cyclic symmetry axiom} where we take $\sigma_1=\tau_{n+1}$ and $\sigma_2=\id$.
In order to express the case we have in mind explicitly, we calculate:
\begin{equation}\label{eq alpha beta tau zero}
	\begin{aligned}
	\alpha_{0,m}^n\tau_{n+1} &= \tau_{n+m}^m \alpha_{n,m}^n & : [0,n-1] \to [0,n+m-1] \\
	\beta_{j,0}^{m,n} & = \tau_{n+m}^m \beta_{j,n}^{m,n} &: [0,m] \setminus \{j\} \to [0,n+m-1]
	\end{aligned}
\end{equation}
and, if $1\leq i \leq n$, then
\begin{equation}\label{eq alpha beta tau nonzero}
	\begin{aligned}
	\alpha_{i,m}^n\tau_{n+1} &= \tau_{n+m} \alpha_{i-1,m}^n & : [0,n]\setminus\{i-1\} \to [0,n+m-1] \\
	\beta_{j,i}^{m,n} & = \tau_{n+m} \beta_{j,i-1}^{m,n} &: [0,m] \setminus \{j\} \to [0,n+m-1].
	\end{aligned}
\end{equation}
Thus, the special case of axiom~\eqref{cyclic symmetry axiom} that we are considering is
\begin{equation}\label{eq non-sigma equations}
\tag{C.$3^{\text{non-}\Sigma}$}
(\tau_{n+1}^*x) \comp{i}{j} y = \begin{cases}
	\tau_{n+m}^* (x \comp{i+1}{j} y) & 0 \leq i \leq n-1 \\
	(\tau_{n+m}^m)^* (x\comp{0}{j} y) & i = n.
\end{cases}
\end{equation}

\begin{definition}
A \emph{non-$\Sigma$ collection} is defined similarly to a collection (Definition~\ref{definition: ccc}), except that $\sigma^*$ is only defined for $\sigma$ in the subgroup $\langle \tau_{n+1} \rangle \leq \Sigma_n^+$.
Likewise, a \emph{non-$\Sigma$ cyclic operad} is defined similarly to a cyclic operad (Definition~\ref{definition: cyclic operad}) except that it only has an underlying non-$\Sigma$ collection and, furthermore, \eqref{cyclic symmetry axiom} is replaced with \eqref{eq non-sigma equations}.
\end{definition}

\begin{proposition}
	\label{prop: CGR comparison}
	The category of positive non-$\Sigma$ cyclic operads is isomorphic to the category of small cyclic multicategories of \cite[Definition 3.3]{ChengGurskiRiehl:CMMAM}.
\end{proposition}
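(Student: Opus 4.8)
The plan is to write down explicit functors in both directions and check that they are mutually inverse, so that the categories are literally isomorphic. First I would record the precise shape of the Cheng--Gurski--Riehl data \cite[Definition 3.3]{ChengGurskiRiehl:CMMAM}: a small cyclic multicategory $M$ has a set of objects with an involution $(-)^{\circ}$, hom-sets $M(c_1,\dots,c_n;c_0)$ (with $n\geq 0$ allowed), an associative and unital single-slot composition $\circ_i$, identities, and for each profile a bijection $\tau$ onto a hom-set with cyclically shifted, involuted profile, subject to $\tau^{n+1}=\id$ together with one axiom describing $\tau$ applied to a composite $g\circ_i f$ in terms of $\tau$ of the factors. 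A morphism is a functor of underlying multicategories strictly compatible with $\tau$.

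From a positive non-$\Sigma$ cyclic operad $P$ I would produce such an $M$ via the non-$\Sigma$ analogue of the forgetful functor of Definition~\ref{definition forgetful functor}: take $\mathrm{ob}(M)=\colors(P)$ with $c^{\circ}\coloneqq c^{\dagger}$, set $M(c_1,\dots,c_n;c_0)\coloneqq P(c_0^{\dagger},c_1,\dots,c_n)$, take $\circ_i$ to be $\comp{i}{0}$, take identities from the $\id_c$, and let $\tau$ be induced by $(\tau_{n+1}^{-1})^{*}$ (replacing $\tau$ by $\tau^{-1}$ if necessary to match CGR's rotation convention). Associativity and unitality of $\circ_i$ follow from \eqref{cyclic associativity axiom} and \eqref{cyclic identity axiom} exactly as for the underlying operad of a cyclic operad; $\tau^{n+1}=\id$ is immediate from $\tau_{n+1}^{n+1}=\id$; and the interaction of $\tau$ with $\circ_i$ is exactly what the restricted symmetry axiom \eqref{eq non-sigma equations}, combined with \eqref{cyclic swap axiom} and the computations \eqref{eq alpha beta tau zero}, \eqref{eq alpha beta tau nonzero} leading to it, provides. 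Conversely, from a small cyclic multicategory $M$ I would take $\colors(P)=\mathrm{ob}(M)$ with its involution, declare $P(\,)\coloneqq\ast$ (forced, and the only place \emph{positivity} is used), put $P(c_0,c_1,\dots,c_n)\coloneqq M(c_1,\dots,c_n;c_0^{\circ})$ for $n\geq 0$, let $\tau^{*}$ be $\tau$, and \emph{define} the operations $\comp{i}{j}$ by the non-$\Sigma$ incarnation of the formulae of Lemma~\ref{lemma cyclic operads circ i zero}: rotate both arguments into standard position, compose along the appropriate single slot of $M$, and rotate back. Showing the result satisfies \eqref{cyclic swap axiom}, \eqref{cyclic associativity axiom}, \eqref{eq non-sigma equations}, and \eqref{cyclic identity axiom} is precisely the ``interpolation between entries-only and exchangeable-output presentations'' flagged in the preamble to this section --- a colored, ground-category-independent reworking of the monochrome computations over $\set$ in \cite{Obradovic:MDCO}.

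It then remains to see the two constructions are mutually inverse on the nose --- each is the identity on colors because the involution is transported along, and the identity on hom-objects because $(c_0^{\circ})^{\dagger}=c_0$ forces $P((c_0^{\circ})^{\dagger},c_1,\dots,c_n)=P(c_0,\dots,c_n)$ --- and that both are functorial, carrying a morphism compatible with $\{\id_c\}$, $\{\comp{i}{0}\}$ and the $\Sigma^{+}$-action to a multicategory functor compatible with $\tau$ and conversely, using Lemma~\ref{lemma cyclic operads circ i zero} once more to pass between ``all $\comp{i}{j}$'' and ``$\comp{i}{0}$ plus $\tau^{*}$''. I expect the one genuinely delicate step to be the axiom-matching in the second paragraph: aligning CGR's ``$\tau$ of a composite'' identity term-for-term with the conjunction of \eqref{cyclic swap axiom}, \eqref{cyclic associativity axiom} and \eqref{eq non-sigma equations}, while keeping straight which input slot of $M$ corresponds to which index pair $(i,j)$ and which powers of $\tau$ appear when one translates back and forth. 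No new idea beyond Lemma~\ref{lemma cyclic operads circ i zero} and the identities \eqref{eq alpha beta tau zero}, \eqref{eq alpha beta tau nonzero}, \eqref{eq non-sigma equations} is needed; the difficulty is purely combinatorial bookkeeping, which is why the paper omits the details.
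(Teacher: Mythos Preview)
Your proposal is correct and follows exactly the approach the paper indicates: the paper explicitly omits proofs of all the comparison propositions in this section, stating only that they follow from ``concrete unpacking of the definitions'' together with a colored, ground-category-independent version of the entries-only / exchangeable-output interpolation of \cite{Obradovic:MDCO}, which is precisely what you outline. Your use of Lemma~\ref{lemma cyclic operads circ i zero} to reduce to $\comp{i}{0}$, and of \eqref{eq non-sigma equations} (with the identities \eqref{eq alpha beta tau zero}, \eqref{eq alpha beta tau nonzero}) to match the CGR $\tau$-axiom, is exactly the bookkeeping the paper alludes to.
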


\begin{remark}[Adjunctions]
\label{remark non-sigma adjoints}
All of the functors from \S\ref{section adjunctions} restrict to adjunctions between non-$\Sigma$ operads and non-$\Sigma$ cyclic operads.
This is the primary reason for the choice of order of entries in the definition of the underlying collection of $LP$ in \S\ref{subsection left adjoint}.
In any case, the only time the symmetric group actions come into play in the proofs are that the counit of $L \dashv F$ (in the proof of Proposition~\ref{proposition left adjoint}) and the unit of $F \dashv R$ (in the proof of Proposition~\ref{proposition right adjoint}) make use of $\tau^*$.
\end{remark}

This notion has been extended from multicategories to symmetric multicategories.

\begin{proposition}
The category of positive cyclic operads is isomorphic to the category of small cyclic symmetric multicategories of \cite{Shulman:2CDCPMA}.
\end{proposition}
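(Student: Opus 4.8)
The plan is to construct an explicit isomorphism of categories, following the template suggested by the surrounding propositions (especially Propositions~\ref{prop: CGR comparison} and~\ref{prop: classic cyclic operad comparison}) together with the two general comparison principles enumerated just before Proposition~\ref{prop: classic cyclic operad comparison}: concrete unpacking of both definitions in the positive case, and the colored, ground-category-independent version of the ``entries-only'' versus ``exchangeable-output'' interpolation. First I would recall the definition of a cyclic symmetric multicategory from \cite{Shulman:2CDCPMA} and observe that it is an ``exchangeable-output'' style definition: one starts from an (ordinary, symmetric) colored multicategory $M$ equipped with a chosen involution on colors and, for each hom-object $M(c_1,\dots,c_n;c_0)$, an action allowing the output $c_0$ to be exchanged with any input $c_i$ (replacing the color with its dual), subject to coherence with the symmetric group actions and with composition. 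The forgetful functor $F:\cycne\to\operads$ of Definition~\ref{definition forgetful functor} already gives, from a positive cyclic operad $P$, an underlying colored operad $FP$ with $FP(c_1,\dots,c_n;c_0)=P(c_0^\dagger,c_1,\dots,c_n)$; the extra ``output exchange'' structure on $FP$ is then read off from the $\Sigma_{n+1}^+$-action on $P$ together with Lemma~\ref{lemma cyclic operads circ i zero}, which shows that all the $\comp{i}{j}$ are determined by $\comp{i}{0}$ and hence by the operadic composition of $FP$.

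Next I would define the functor in each direction. In one direction, a positive cyclic operad $P$ is sent to the colored symmetric multicategory $FP$ equipped with the involution $\colors(P)\to\colors(P)$ and the output-exchange maps induced by the action of the cyclic rotations $\tau_{n+1}\in\Sigma_n^+$ and the transpositions in $\Sigma_{n+1}^+$ not fixing $0$. Positivity of $P$ guarantees there is no residual data in the empty profile, so nothing is lost. In the other direction, given a cyclic symmetric multicategory $(M,\dagger,\text{exchange})$, define a collection by $P(c_0,c_1,\dots,c_n):=M(c_1,\dots,c_n;c_0^\dagger)$ for $n\geq 1$ and $P(\,\,):=*$ (terminal), with the full $\Sigma_{n+1}^+$-action built by combining the $\Sigma_n$-action on inputs of $M$ with the output-exchange operations; one then defines $\comp{i}{j}$ via the formula of Lemma~\ref{lemma cyclic operads circ i zero}, using the multicategorical composition of $M$ for the $\comp{i}{0}$ pieces and the $\Sigma^+$-action for the rotations. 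One checks that axioms~\eqref{cyclic swap axiom}--\eqref{cyclic identity axiom} translate exactly into the coherence axioms of a cyclic symmetric multicategory, and that the two constructions are mutually inverse, including on morphisms (a morphism of cyclic symmetric multicategories is a functor of the underlying multicategories commuting with the involution and the exchange structure, which matches a morphism of collections commuting with the $\comp{i}{j}$). Finally I would note that the previous proposition (the non-symmetric case, Proposition~\ref{prop: CGR comparison}) and this one are compatible, since Shulman's cyclic symmetric multicategories specialize to Cheng--Gurski--Riehl's cyclic multicategories upon discarding the symmetric group actions.

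I expect the main obstacle to be the bookkeeping required to show that the output-exchange coherence axioms of \cite{Shulman:2CDCPMA} are \emph{equivalent} to, and not merely implied by, our axioms~\eqref{cyclic symmetry axiom} and~\eqref{cyclic associativity axiom} once everything is restricted to the output-to-input transpositions and cyclic rotations. Concretely, one must verify that the interaction of the exchange operations with composition in a cyclic symmetric multicategory reproduces precisely the instances of~\eqref{cyclic associativity axiom} and~\eqref{cyclic symmetry axiom} that survive after using Lemma~\ref{lemma cyclic operads circ i zero} to eliminate the $\comp{i}{j}$ with $j\neq 0$; this is a matching of index conventions between the two papers and is exactly the kind of ``concrete unpacking'' the excerpt flags as routine but unilluminating. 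Accordingly, in keeping with the stated policy for this section, the detailed verification is omitted.
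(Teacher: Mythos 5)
The paper offers no proof of this proposition: it explicitly states (just before Proposition~\ref{prop: classic cyclic operad comparison}) that all comparisons in this section follow from concrete unpacking plus the colored, ground-category-independent ``entries-only''/``exchangeable-output'' interpolation, and that the proofs are omitted. Your sketch follows exactly that prescribed strategy --- passing through $F$, recovering the full structure via Lemma~\ref{lemma cyclic operads circ i zero}, handling positivity by making the empty profile terminal, and deferring the index bookkeeping --- so it is consistent with, and at essentially the same level of detail as, the paper's own treatment.
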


\subsection{Colored exchangeable-output cyclic operads}
\label{subsec: HRY}
Suppose that $\mathfrak{C}$ is a set, which we consider as an involutive set with the trivial involution $c^\dagger = c$.
A $\mathfrak{C}$-colored cyclic operad $O$ in the sense of \cite{HackneyRobertsonYau:HCO} gives rise to a $\mathfrak{C}$-colored cyclic operad $P$ in our sense.
The underlying collection of $P$ is given by
\[
	P(c_0, \dots, c_n) = \begin{cases}
		O(c_1, \dots, c_n; c_0) & \text{ if $n \geq 0$ } \\
		* & \text{ if $n=-1$.}
	\end{cases}
\]
The $\Sigma_n^+$ action and identity elements of $P$ are the ones already held by $O$.
We need to define operations 
\[
	\comp{i}{j} : P(\uc) \otimes P(\ud) \to P(\uc \comp{i}{j} \ud)
\]
for $0 \leq i \leq n = \| \uc \|$, $0\leq j \leq m = \| \ud \|$.
For $1 \leq i \leq n$, we let $\comp{i}{0} \coloneqq \comp{i}{}$.
Further, we define
\[
	x\comp{0}{0} y = \begin{cases}
		\tau_{m+n}^* \left( \left( \tau_{n+1}^{-1} \right)^* x \comp{1}{} y\right) & \text{ if $n > 0$,} \\
		(\tau_{m+n}^{n+1})^* \left( \left( \tau_{m+1}^{-1} \right)^* y \comp{1}{} x\right) & \text{ if $n = 0$ and $m > 0$,} \\
		* & \text{ if $n=m=0$.}
	\end{cases}
\]
For other values of $j$, use the formula from Lemma~\ref{lemma cyclic operads circ i zero}.

This process is reversable, and yields the following equivalence of categories.

\begin{proposition}
\label{prop: HRY comparison}
Consider the full subcategory of $\cycne$ consisting of those positive cyclic operads $P$ so that the involution on $\colors(P)$ is trivial. 
This subcategory is equivalent to the category of colored cyclic operads of \cite{HackneyRobertsonYau:HCO}.
\end{proposition}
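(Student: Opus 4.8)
The plan is to establish the claimed equivalence of categories by constructing functors in both directions and checking they are mutually inverse (or at least mutually inverse up to natural isomorphism, since the statement says ``equivalent'' rather than ``isomorphic''). The excerpt already essentially gives one direction: there is an explicit construction sending a $\mathfrak{C}$-colored cyclic operad $O$ in the sense of \cite{HackneyRobertsonYau:HCO} to an object $P \in \cycne$ whose color involution is trivial. First I would check that this assignment is well-defined, i.e., that the $P$ so constructed genuinely satisfies axioms \eqref{cyclic swap axiom}--\eqref{cyclic identity axiom}: axioms \eqref{cyclic swap axiom} and \eqref{cyclic symmetry axiom} are forced on the ``operad part'' by the definition of the $\comp{0}{0}$ operations via Lemma~\ref{lemma cyclic operads circ i zero} together with the exchangeable-output symmetry that $O$ already carries; axiom \eqref{cyclic associativity axiom} and axiom \eqref{cyclic identity axiom} reduce to the operadic associativity and unit axioms for $O$ plus bookkeeping with the cyclic permutations $\tau$. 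Then I would check functoriality in $O$ (a morphism of colored cyclic operads in the sense of loc.\ cit.\ is exactly a morphism of operads compatible with the extra symmetries, which visibly induces a morphism of the associated collections commuting with the $\comp{i}{j}$).

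Next I would construct the reverse functor. Given $P \in \cycne$ with trivial color involution, set $O(c_1, \dots, c_n; c_0) \coloneqq P(c_0, c_1, \dots, c_n)$ for $n \geq 0$, which makes sense precisely because positivity kills the $n = -1$ level. The operadic composition $\comp{i}{}$ on $O$ is $\comp{i}{0}$ on $P$ for $1 \le i \le n$, the symmetric group actions on $O$ restrict the $\Sigma_n^+$-actions on $P$ along $\Sigma_n \hookrightarrow \Sigma_n^+$, and the extra ``exchangeable output'' symmetry is recovered from the full $\Sigma_n^+$-action on $P(c_0, \dots, c_n)$ together with the $\comp{0}{0}$ operations. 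One should verify that the axioms for a colored cyclic operad in the sense of \cite{HackneyRobertsonYau:HCO} follow from Definition~\ref{definition: cyclic operad}; this is the ``concrete unpacking in a restricted situation'' the paragraph before the proposition alludes to, combined with the colored, ground-category-independent interpolation between entries-only and exchangeable-output presentations. I would then exhibit natural isomorphisms between the two round-trip composites and the respective identity functors: one composite is literally the identity on the underlying collection/operad, and the other agrees on objects and on all structure maps by construction, so both natural transformations can be taken to be identities, upgrading ``equivalence'' to ``isomorphism'' if desired --- though since the paper only claims equivalence, it suffices to note the round trips are naturally isomorphic to the identity.

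The main obstacle I anticipate is not any single axiom but rather the clean matching of conventions: the definition in \cite{HackneyRobertsonYau:HCO} uses an ``exchangeable-output'' formalism indexed differently from our ``entries-only'' formalism, so the bulk of the work is tracking how their cyclic action $\tau$ on $O(c_1, \dots, c_n; c_0)$, which cyclically permutes outputs with inputs, corresponds under the reindexing $P(c_0, \dots, c_n) = O(c_1, \dots, c_n; c_0)$ to our $\Sigma_n^+$-action, and dually how their single composition and our $\comp{i}{j}$ for general $j$ are related via the formulas of Lemma~\ref{lemma cyclic operads circ i zero}. In particular the case analysis hidden in the definition of $x \comp{0}{0} y$ (the three branches according to whether $n$, $m$ vanish) must be shown to assemble into something associative and symmetric; this is where one genuinely uses that the involution is trivial, so that $c_i = d_j^\dagger$ becomes $c_i = d_j$ and the color-matching conditions coincide with those of loc.\ cit. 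Since the paper explicitly says these comparison proofs are omitted, I would in the end simply indicate this strategy and defer to the general interpolation machinery, but the honest verification is the reindexing bookkeeping just described.
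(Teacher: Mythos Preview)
Your proposal is correct and follows precisely the approach the paper indicates: the paper omits the proof entirely, merely noting that it follows from ``concrete unpacking of the definitions in restricted situations'' together with the colored, ground-category-independent interpolation between entries-only and exchangeable-output presentations, which is exactly the strategy you outline. Your identification of the reindexing bookkeeping (matching their $\tau$-action to the $\Sigma_n^+$-action, and the case analysis for $\comp{0}{0}$) as the substantive content is on point.
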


\subsection{Dioperads}
A dioperad is usually thought of as parametrizing algebraic structures that have multiple inputs and multiple outputs which are not interchangeable. 
The governing type of graph is then some version of directed trees. 
A priori this seems different from the undirected trees that govern cyclic operads.
Gan suggested that there might be a way to relate dioperads to cyclic operads. 
A judicious choice of color set for cyclic operads realizes this possibility, as reflected in the following proposition. 
To our knowledge no similar explicit statement appears in the literature, although Kaufmann and Lucas~\cite[6.4.1]{KaufmannLucas:DFC} treat it implicitly and it may have been known in other contexts.
\label{subsec:dioperads}
\begin{proposition}
\label{prop: dioperad comparison}
The category of cyclic operads with color set $\iota_!\mathfrak{C}$ is equivalent to the category of $\mathfrak{C}$-colored dioperads~\cite[11.5.1]{YauJohnson:FPAM}. 
Given such a cyclic operad $P$, we define the underlying $\Sigma^\mathcal{V}_S$-module of the dioperad $D\binom{\underline{d}}{\underline{c}}$ as 
\[
D\binom{\underline{d}}{\underline{c}} =P(c_1^0,\ldots,c_k^0,d_1^1,\ldots,d_\ell^1).
\]
Similarly, given a dioperad $D$, we define the underlying collection of the cyclic operad $P$ in profile $\underline{c}$ as $D\binom{\underline{c}''}{\underline{c}'}$ where $\underline{c}'$ consists of those elements of $\underline{c}$ of the form $c_i^0$ and $\underline{c}''$ consists of those elements of $\underline{c}$ of the form $c_i^1$.

More specifically, the category $\cyc_{\iota_!\{*\}}(\mathcal{V})$ is equivalent to the category of monochrome \emph{dioperads}.
It is instructive to rename the elements of the color set to $\iota_!\{*\} = \{*^0, *^1\} \cong \{\mathrm{in},\mathrm{out}\}$ (with free involution).
To match the original definition~\cite[1.1]{Gan:KDD} of a dioperad, we should further restrict to $\{\mathrm{in},\mathrm{out}\}$-colored cyclic operads $P$ such that $P(\uc^{\underline a})=\varnothing$ unless there exist at least one $\mathrm{in}$ index and at least one $\mathrm{out}$ index in $\uc^{\underline{a}}$.
\end{proposition}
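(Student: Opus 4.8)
The plan is to realize the equivalence by the two functors named in the statement and to reduce every verification to an unpacking of the cyclic operad axioms under the dictionary ``$0$-labeled slot $=$ input, $1$-labeled slot $=$ output.'' Throughout we work in $\cyc_{\iota_!\mathfrak{C}}(\mathcal{V})$. Since the involution on $\iota_!\mathfrak{C}$ is free, every entry of a profile carries a well-defined label in $\{0,1\}$ which is reversed by $(-)^\dagger$; hence the color-matching condition $c_i=d_j^\dagger$ of Definition~\ref{definition: cyclic operad} can hold only when one of the two glued slots is a $0$-slot and the other a $1$-slot, so $\comp{i}{j}$ is always a genuine ``output-to-input'' grafting (or, by~\eqref{cyclic swap axiom}, an ``input-to-output'' one). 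Using the $\Sigma_n^+$-action, each profile $\uc$ is canonically isomorphic to its \emph{standard form}, obtained by the unique shuffle that moves all $1$-slots to the right of all $0$-slots while preserving the relative order within each label; write $\uc'$ for the list of colors of the $0$-slots, $\uc''$ for that of the $1$-slots, and $\rho_{\uc}\in\Sigma_n^+$ for this sorting shuffle.

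First I would construct $\Phi$ from $\cyc_{\iota_!\mathfrak{C}}(\mathcal{V})$ to the category of $\mathfrak{C}$-colored dioperads by the stated formula $D\binom{\underline d}{\underline c}\coloneqq P(c_1^0,\dots,c_k^0,d_1^1,\dots,d_\ell^1)$. The two commuting symmetric-group actions of the dioperad come from restricting the $\Sigma_{k+\ell-1}^+$-action of $P$ to the subgroup $\Sigma_k\times\Sigma_\ell$ of shuffles that preserve the partition of slots into $0$-slots and $1$-slots; the dioperadic graftings come from those $\comp{i}{j}$ that glue a $1$-slot of the first factor to a $0$-slot of the second, post-composed with the structural isomorphism returning to standard form. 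One then checks, by direct translation, that~\eqref{cyclic associativity axiom} gives dioperad associativity, that~\eqref{cyclic symmetry axiom} gives equivariance of grafting (and, with~\eqref{cyclic swap axiom}, shows that restricting to ``output-to-input'' compositions loses no information), and that~\eqref{cyclic identity axiom} together with $\id_{c^0}\colon\mathbf{1}_{\mathcal{V}}\to P(c^1,c^0)=D\binom{c}{c}$ gives the dioperad units. The empty-profile object $P(\,\,)=D\binom{\,}{\,}$ and the operations $\comp{0}{0}\colon P(c^0)\otimes P(c^1)\to P(\,\,)$ are exactly the pieces of a Yau--Johnson dioperad lying over biprofiles with an empty input or output list, which is why \emph{no} restriction on profiles is needed for the first assertion.

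Conversely I would construct $\Psi$ on objects by $\Psi(D)(\uc)\coloneqq D\binom{\uc''}{\uc'}$, letting $\sigma\in\Sigma_n^+$ act through the (unique, label-preserving) element of $\Sigma_k\times\Sigma_\ell$ determined by $\sigma$ together with the two sorting shuffles, and reconstructing all $\comp{i}{j}$ from the dioperadic graftings via Lemma~\ref{lemma cyclic operads circ i zero}. That $\sigma^*$ is well-defined and functorial follows formally from the fact that the $\Sigma_k\times\Sigma_\ell$-action on $D$ is a genuine group action. The main obstacle is to verify that $\Psi(D)$ satisfies~\eqref{cyclic swap axiom} and~\eqref{cyclic symmetry axiom}: this is where the ``choice of linear order'' asymmetry flagged after~\eqref{equation ep} enters, and the verification is a color-free, $\mathcal{V}$-independent rerun of Obradovi\'c's interpolation~\cite{Obradovic:MDCO} between entries-only and exchangeable-output presentations. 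Concretely one must show that the composites of sorting shuffles produced on the two sides of each axiom coincide, which is a finite combinatorial identity about how standard forms behave under grafting and follows from the equivariance and associativity of $D$ after a case analysis on the relative positions of the glued slots. Axiom~\eqref{cyclic associativity axiom} for $\Psi(D)$ is a similar but milder bookkeeping exercise, and~\eqref{cyclic identity axiom} is immediate.

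Finally I would check that $\Phi$ and $\Psi$ are mutually quasi-inverse. One has $\Phi\Psi(D)=D$ on the nose, while $\Psi\Phi(P)$ agrees with $P$ on standard profiles and is carried to $P$ on every profile by the structural isomorphism associated to $\rho_{\uc}$; these assemble into an isomorphism of cyclic operads precisely because the structure maps of $\Psi\Phi(P)$ were built so as to make the $\rho_{\uc}$ compatible with all of the structure. On morphisms, a map in $\cyc_{\iota_!\mathfrak{C}}(\mathcal{V})$ is a map of underlying collections compatible with the structure, and restriction to standard profiles gives a bijection between such maps and morphisms of the associated dioperads. The monochrome statement is the special case $\mathfrak{C}=\{\ast\}$, where $\iota_!\{\ast\}=\{\mathrm{in},\mathrm{out}\}$. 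For the comparison with Gan's original definition~\cite[1.1]{Gan:KDD} one restricts $\Phi$ and $\Psi$ to the full subcategory of those $P$ with $P(\uc)=\varnothing$ whenever $\uc$ has no $0$-slot or no $1$-slot (this uses an initial object in $\mathcal{V}$); this subcategory is closed under all the structure maps, since grafting two such operations always retains a $0$-slot from one factor and a $1$-slot from the other and the identities live in profiles of the form $c^1,c^0$, so the equivalence restricts, Gan's dioperads being identified with the Yau--Johnson dioperads supported on biprofiles with nonempty input and output lists.
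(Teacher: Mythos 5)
Your proposal is correct and matches the paper's (largely omitted) argument: the proposition itself already records the object-level dictionary you use, and the paper explicitly declines to give details, saying only that these comparisons follow from concrete unpacking of the definitions together with a colored, ground-category-independent version of Obradovi\'c's interpolation between ``entries-only'' and ``exchangeable-output'' presentations --- precisely the two ingredients your sketch invokes. Your free-involution/label dictionary, the sorting-shuffle reduction to standard form, and the closure argument for the restriction recovering Gan's definition are all sound, so this is essentially the paper's intended proof carried out in outline.
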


\begin{remark}
\label{remark full dioperads category}
The preceding proposition indicates how the category of dioperads (for varying color sets, as in \cite[\S 2]{HackneyRobertsonYau:SMIP}) sits naturally inside of $\cyc$.
We have a commutative diagram of functors
\[ \begin{tikzcd}
	\mathbf{Dioperads}_{\mathfrak{C}} \rar{\simeq} \dar & \cyc_{\iota_!\mathfrak{C}} \dar \\
	\mathbf{Dioperads}\dar \rar  & \cyc \dar \\
	\set \rar{\iota_!} & \iset
	\end{tikzcd} \]	
whose middle arrow can be considered as a (non-full) subcategory inclusion.
Since $\operads$ is a (full) subcategory of $\mathbf{Dioperads}$, we can thus also regard $\operads$ as a subcategory of $\cyc$.
The composite $\operads \to \mathbf{Dioperads} \to \cyc$ is isomorphic to the functor $L$ from Section~\ref{subsection left adjoint}.
\end{remark}

\section{Bifibrations and colored cyclic operads}
\label{section bifibrations}
In this section, we establish bicompleteness of the category of cyclic operads given bicompleteness of the ground category. 
The route we have chosen uses the technology of Grothendieck fibrations to pass from the fixed-color case to the general case.
The reader comfortable with the existence of limits and colimits may safely skip this section.

\subsection{A quick recollection of Grothendieck fibrations}
\label{subsection grothendieck fibrations}
Suppose that $p: \mathcal{E} \to \mathcal{B}$ is a functor.
A morphism $\phi : d\to e$ of $\mathcal{E}$ is \emph{Cartesian} if for each object $u\in \mathcal{E}$, the function
\begin{equation}\label{eq: Cartesian morphism}
\begin{aligned}
	\mathcal{E}(u,d) &\to \mathcal{E}(u,e) \times_{\mathcal{B}(pu,pe)} \mathcal{B}(pu,pd) \\
	\gamma &\mapsto (\phi\gamma, p(\gamma))
\end{aligned}
\end{equation}
is a bijection. 
Dually, $\phi$ is called \emph{coCartesian} if the function 
\[ 
\mathcal{E}(e,u) \to \mathcal{E}(d,u) \times_{\mathcal{B}(pd,pu)} \mathcal{B}(pe,pu) 
\] 
is a bijection for every $u$.
The functor $p$ is a \emph{(Grothendieck) fibration} if for each object $e \in \mathcal{E}$ and each morphism $\alpha$ in $\mathcal{B}$ with codomain $p(e)$, there exists a Cartesian morphism $\phi : \alpha^*(e) \to e$ with $p(\phi) = \alpha$.
As this Cartesian morphism $\phi$ is necessarily unique up to unique isomorphism, the notation $\alpha^*(e)$ for the domain of the chosen $\phi$ is relatively harmless.
Dually, the functor $p$ is an \emph{opfibration} if for each object $d\in \mathcal{E}$ and each morphism $\alpha$ in $\mathcal{B}$ with domain $p(d)$, there exists a coCartesian morphism $\phi : d \to \alpha_!(d)$ so that $p(\phi) = \alpha$.
A \emph{bifibration} is a functor which is both a fibration and an opfibration.

If $p: \mathcal{E} \to \mathcal{B}$ is a fibration, 
then for every morphism $\alpha : a \to b$ in $\mathcal{B}$, there is an induced functor $\alpha^* : p^{-1}(b) \to p^{-1}(a)$.
To define this functor, first \emph{choose} Cartesian lifts $\phi_x : \alpha^*x \to x$ for each object $x\in p^{-1}(b)$.
On objects, $\alpha^*$ sends $x$ to the domain of $\phi_x$.
On morphisms, the functor sends $\beta : x \to y$ (with $p(\beta) = \id_{b}$) to the inverse image of $(\beta\phi_x, \id_a) \in \mathcal{E}(\alpha^*x,y) \times_{\mathcal{B}(a,b)} \mathcal{B}(a,a)$ under the bijection \eqref{eq: Cartesian morphism} associated to $\phi_y$.
The assignment $b\mapsto p^{-1}(b)$ constitutes a \emph{pseudo}functor $\mathcal{B}^{op} \to \cat$, as we had to make a choice of the Cartesian lifts.
This assignment, which takes a fibration and produces a pseudofunctor, is part of an equivalence of 2-categories between pseudofunctors $\mathcal{B}^{op} \to \cat$ and fibrations over $\mathcal{B}$.
The reverse direction is the \emph{Grothendieck construction}.
If $\Psi : \mathcal{B}^{op} \to \cat$ is a pseudofunctor, then there is a category $\int \Psi$ with objects those pairs $(b,x)$ with $b$ an object of $\mathcal{B}$ and $x$ and object of $\Psi(b)$.
A morphism from $(b,x) \to (b',x')$ consists of a pair $u : b\to b'$ and $f: x \to u^*(x') = \Psi(u)(x')$.
The evident functor $p: \int \Psi \to \mathcal{B}$ is a fibration, with $p^{-1}(b) = \Psi(b)$; this fibration is commonly referred to as the Grothendieck construction.

Likewise, pseudofunctors $\mathcal{B} \to \cat$ are essentially the same things as opfibrations over $\mathcal{B}$.
Bifibrations over $\mathcal{B}$ are the same things as pseudofunctors from $\mathcal{B}$ into a 2-category of adjunctions.

\subsection{The category of colored cyclic operads}\label{ss: category of colored cyclic operads}

If $\mathfrak{C}$ is a set (possibly equipped with involution), let $\mathbf{M}\mathfrak{C}$ denote the set\footnote{Notice that $\mathbf{M}\mathfrak{C}$ is precisely the set of objects of the category $\mathbf{\Sigma}_{\mathfrak{C}}^+$, briefly introduced in Remark~\ref{remark collections as fibration}.} of $\mathfrak{C}$ profiles, that is, the set of finite ordered lists whose elements are chosen from $\mathfrak{C}$.
If $\mathfrak{C}$ is a set equipped with an involution $\dagger$, then there is an $\mathbf{M}\mathfrak{C}$-colored operad $\mathscr{C}_{\mathfrak{C}}$ (in $\set$) so that $\mathscr{C}_{\mathfrak{C}}$ algebras are precisely the $\mathfrak{C}$-colored cyclic operads. The category $Alg(\mathscr{C}_{\mathfrak{C}})$ is equivalent to the category $\cyc_{\mathfrak{C}}(\set)$ from above.
We can also consider the colored operad $\mathscr{C}_{\mathfrak{C}}^{\mathcal{V}}$ in $\mathcal{V}$, which is obtained from $\mathscr{C}_{\mathfrak{C}}$ by using the unique cocontinuous symmetric monoidal functor $\set \to \mathcal{V}$; in this case we again have $Alg(\mathscr{C}_{\mathfrak{C}}^{\mathcal{V}}) \simeq \cyc_{\mathfrak{C}}(\mathcal{V}).$

The operad $\mathscr{C}_{\mathfrak{C}}$ is generated by binary operations 
\[ \comp{i}{j} \in \mathscr{C}_{\mathfrak{C}}(\uc , \ud; \uc \comp{i}{j} \ud)
\]
when $c_i = d_j^\dagger$, unary operations
\[
	\sigma^* \in \mathscr{C}_{\mathfrak{C}}(\uc; \uc\sigma)
\]
when $\sigma \in \Sigma_{\|\uc\|}^+$, and nullary operations
\[
	\id_c \in \mathscr{C}_{\mathfrak{C}}(\,\,; (c^\dagger, c)).
\]
If $\alpha: \mathfrak{C} \to \mathfrak{D}$ is a morphism in $\iset$, then there is a map of colored operads
$\mathscr{C}_{\mathfrak{C}} \to \mathscr{C}_{\mathfrak{D}}$ which has color map $\mathbf{M}\alpha : \mathbf{M}\mathfrak{C} \to \mathbf{M}\mathfrak{D}$ and sends generators to generators.
The assignment $\mathfrak{C} \mapsto \mathscr{C}_{\mathfrak{C}}^{\mathcal{V}}$ thus constitutes a functor $\iset \to \operads(\mathcal{V})$.

\begin{lemma}
	Let $\mathcal{V}$ be a cocomplete closed symmetric monoidal category. 
	Suppose that $F : \mathcal{B} \to \operads(\mathcal{V})$ is any functor and let $p : \mathcal{E} \to \mathcal{B}$ 
	be the Grothendieck fibration associated to the composite \[ \mathcal{B}^{op} \xrightarrow{F^{op}} \operads(\mathcal{V})^{op} \xrightarrow{Alg(-)} \cat.\]
	Then $p$ is also an opfibration.
\end{lemma}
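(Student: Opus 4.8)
The plan is to recognize $p$ as a \emph{bi}fibration by checking the criterion recalled at the end of \S\ref{subsection grothendieck fibrations}: a Grothendieck fibration coming from a pseudofunctor $\Psi \colon \mathcal{B}^{op} \to \cat$ is simultaneously an opfibration if and only if $\Psi(\alpha)$ admits a left adjoint for every morphism $\alpha$ of $\mathcal{B}$. Here $\Psi = Alg(-) \circ F^{op}$ (a strict functor, since restriction of algebras is strictly functorial), and $\Psi$ carries a morphism $\alpha \colon a \to b$ of $\mathcal{B}$ to the restriction functor
\[
	\Psi(\alpha) = F(\alpha)^* \colon Alg(F(b)) \longrightarrow Alg(F(a))
\]
along the colored operad morphism $F(\alpha) \colon F(a) \to F(b)$. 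So the lemma follows once we establish the following general fact, which mentions neither $\mathcal{B}$ nor $F$: \emph{if $g \colon P \to Q$ is a morphism of colored operads in a cocomplete closed symmetric monoidal category $\mathcal{V}$, then the restriction functor $g^* \colon Alg(Q) \to Alg(P)$ has a left adjoint $g_!$}. Given this, each $\Psi(\alpha)$ has a left adjoint, and the coCartesian lift of $\alpha$ at an object $(a, X)$ of $\mathcal{E}$ is the morphism $(a, X) \to (b, F(\alpha)_! X)$ classified by the unit of the adjunction $F(\alpha)_! \dashv F(\alpha)^*$; this exhibits $p$ as an opfibration.

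To construct $g_!$ I would appeal to the adjoint lifting theorem. Say $P$ is $\mathfrak{C}$-colored and $Q$ is $\mathfrak{D}$-colored. The forgetful functors $U_P \colon Alg(P) \to \mathcal{V}^{\mathfrak{C}}$ and $U_Q \colon Alg(Q) \to \mathcal{V}^{\mathfrak{D}}$ are monadic, with left adjoints the respective free-algebra functors, and the color function $\colors(g) \colon \mathfrak{C} \to \mathfrak{D}$ induces a restriction functor $\mathcal{V}^{\mathfrak{D}} \to \mathcal{V}^{\mathfrak{C}}$ which --- as $\mathcal{V}$ has coproducts --- admits a left adjoint (left Kan extension, i.e.\ summing over the fibres of $\colors(g)$). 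These fit into a commuting square
\[ \begin{tikzcd}
	Alg(Q) \rar{g^*} \dar[swap]{U_Q} & Alg(P) \dar{U_P} \\
	\mathcal{V}^{\mathfrak{D}} \rar & \mathcal{V}^{\mathfrak{C}}
\end{tikzcd} \]
of right adjoints whose vertical legs are monadic and whose bottom leg has a left adjoint. The adjoint lifting theorem then yields the left adjoint $g_!$ of $g^*$, as soon as $Alg(Q)$ is known to have coequalizers of reflexive pairs.

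The crux --- and the only place the hypotheses on $\mathcal{V}$ genuinely enter --- is thus to verify that $Alg(Q)$ has reflexive coequalizers; in fact I would prove it is cocomplete. Since $\mathcal{V}$ is closed symmetric monoidal, $a \otimes (-)$ preserves all colimits, and from this (together with the standard passage from separate to joint preservation for multivariable functors) one deduces that the free-algebra monad $T_Q$ on $\mathcal{V}^{\mathfrak{D}}$ --- built from coproducts, quotients by symmetric-group actions, and iterated tensor products of its argument --- preserves reflexive coequalizers. By the classical theorem of Linton that the category of algebras for a reflexive-coequalizer-preserving monad on a cocomplete category is again cocomplete, $Alg(Q)$ is cocomplete, so the hypothesis of the adjoint lifting theorem is met. (One could instead simply cite the literature, where the existence of the ``extension of scalars'' functor $g_!$ along a map of colored operads over a cocomplete closed symmetric monoidal base is standard.) Having produced $g_!$ for every $g$, we conclude that every $\Psi(\alpha)$ has a left adjoint and hence that $p$ is an opfibration.
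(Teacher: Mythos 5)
Your proposal is correct and takes essentially the same route as the paper: the paper's proof likewise reduces the claim to the fact that for any map $f\colon O \to O'$ of colored operads the restriction functor $f^*\colon Alg(O') \to Alg(O)$ admits a left adjoint $f_!$ (guaranteed by cocompleteness of $\mathcal{V}$), and then invokes the standard criterion (citing Jacobs, Lemma 9.1.2) that a fibration whose reindexing functors all have left adjoints is also an opfibration. Your adjoint-lifting-theorem construction of $g_!$ merely fills in the detail that the paper treats as standard, and it is sound.
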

\begin{proof}
	If $f: O \to O'$ is any map of colored operads, cocompleteness of $\mathcal{V}$ assures that the map $f^* : Alg(O') \to Alg(O)$ admits a left adjoint $f_!$.
	This is enough to ensure that $p$ is an opfibration as well (see, for instance, \cite[Lemma 9.1.2]{Jacobs:CLTT}).
\end{proof}

\begin{lemma}
\label{lemma bifibration}
If $\mathcal{V}$ is cocomplete, then the functor $\cyc(\mathcal{V}) \to \iset$ which sends a cyclic operad to its involutive set of colors is a Grothendieck bifibration.
\end{lemma}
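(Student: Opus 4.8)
The plan is to realize $\cyc(\mathcal{V}) \to \iset$ as a category equivalent over $\iset$ to the Grothendieck fibration treated in the preceding lemma, and then simply read off the conclusion. Take $\mathcal{B} = \iset$ and let $F \colon \iset \to \operads(\mathcal{V})$ be the functor $\mathfrak{C} \mapsto \mathscr{C}_{\mathfrak{C}}^{\mathcal{V}}$ constructed above, which sends a map $\alpha \colon \mathfrak{C} \to \mathfrak{D}$ in $\iset$ to the map of colored operads that is $\mathbf{M}\alpha$ on colors and the identity on generators. Composing with $Alg(-)$ produces a pseudofunctor $\Psi \colon \iset^{op} \to \cat$; by the identification $Alg(\mathscr{C}_{\mathfrak{C}}^{\mathcal{V}}) \simeq \cyc_{\mathfrak{C}}(\mathcal{V})$ recalled above, $\Psi(\mathfrak{C})$ is (equivalent to) the fiber $\cyc_{\mathfrak{C}}(\mathcal{V})$, and $\Psi(\alpha) = Alg(F\alpha)$ is the restriction-of-algebras functor.

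Next I would identify the Grothendieck construction $p \colon \int\Psi \to \iset$ with $\cyc(\mathcal{V}) \to \iset$. On objects this is just the equivalence $Alg(\mathscr{C}_{\mathfrak{C}}^{\mathcal{V}}) \simeq \cyc_{\mathfrak{C}}(\mathcal{V})$ fiberwise. On morphisms, an arrow $(\mathfrak{C}, P) \to (\mathfrak{D}, Q)$ of $\int\Psi$ is a pair consisting of $\alpha \colon \mathfrak{C}\to\mathfrak{D}$ together with a map $P \to \alpha^*Q$ of $\mathfrak{C}$-colored cyclic operads, where $\alpha^*Q$ is the cyclic operad with $(\alpha^*Q)(\uc) = Q(\alpha\uc)$ and all structure maps induced from those of $Q$; this is again a cyclic operad precisely because $\alpha$ commutes with the involutions, so that $\id_c \colon \mathbf{1}_{\mathcal{V}} \to Q((\alpha c)^\dagger, \alpha c) = (\alpha^*Q)(c^\dagger, c)$ makes sense. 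Unwinding Definition~\ref{definition: cyclic operad}, giving such a pair is exactly the same as giving a morphism of cyclic operads $P \to Q$ whose color map is $\alpha$, and these bijections are compatible with composition, so they assemble into an equivalence of categories over $\iset$. (Equivalently, one checks directly that each map $\alpha^*Q \to Q$ is Cartesian for the projection $\cyc(\mathcal{V}) \to \iset$, exhibiting it as a fibration classified by $\Psi$.)

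The conclusion is then immediate: the preceding lemma — which is where cocompleteness of $\mathcal{V}$ enters, via the left adjoints $f_!$ to restriction of algebras — says that $p \colon \int\Psi \to \iset$ is an opfibration as well as a fibration, hence a bifibration; and being a bifibration transfers across the equivalence over $\iset$ to $\cyc(\mathcal{V}) \to \iset$. The only real work is the bookkeeping in the middle step, namely checking that the fiberwise equivalences $\cyc_{\mathfrak{C}}(\mathcal{V}) \simeq Alg(\mathscr{C}_{\mathfrak{C}}^{\mathcal{V}})$ are pseudonatural in $\mathfrak{C}$, i.e., that restriction of $\mathscr{C}^{\mathcal{V}}_{\bullet}$-algebras along $F\alpha$ agrees with the naive reindexing $\alpha^*$ of cyclic operads; since both operations simply relabel colors and reindex operation objects along $\alpha$, this is routine.
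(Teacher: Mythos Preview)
Your proposal is correct and follows essentially the same approach as the paper: apply the preceding lemma to the functor $\mathfrak{C} \mapsto \mathscr{C}_{\mathfrak{C}}^{\mathcal{V}}$ and identify $\cyc(\mathcal{V}) \to \iset$ with the Grothendieck construction for $\mathfrak{C} \mapsto Alg(\mathscr{C}_{\mathfrak{C}}^{\mathcal{V}}) \simeq \cyc_{\mathfrak{C}}(\mathcal{V})$. The paper's proof is terser, simply asserting that the morphisms of Definition~\ref{definition: cyclic operad} are precisely those of the Grothendieck construction; you have spelled out this identification (including the role of the involution in making $\alpha^*Q$ a cyclic operad) in more detail, but the argument is the same.
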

\begin{proof}
Apply the previous lemma to the functor $\mathfrak{C} \mapsto \mathscr{C}_{\mathfrak{C}}^{\mathcal{V}}$ from $\iset$ to $\operads(\mathcal{V})$. The morphisms from Definition~\ref{definition: cyclic operad} are precisely the morphisms in the Grothendieck construction associated to the contravariant functor $\mathfrak{C} \mapsto Alg(\mathscr{C}_{\mathfrak{C}}^{\mathcal{V}}) \simeq \cyc_{\mathfrak{C}}(\mathcal{V})$  from $\iset$ to $\cat$.
\end{proof}

\begin{proposition}\label{cyc is bicomplete}
Suppose that $\mathcal{V}$ is a closed symmetric monoidal category. 
If $\mathcal{V}$ is bicomplete, then so is $\cyc(\mathcal{V})$.
\end{proposition}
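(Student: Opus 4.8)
The plan is to exploit the bifibration $p \colon \cyc(\mathcal{V}) \to \iset$ produced by Lemma~\ref{lemma bifibration}, together with the standard principle that the total category of a bifibration is bicomplete as soon as its base and all of its fibers are bicomplete (see, e.g., \cite{Jacobs:CLTT}). First I would check the two input hypotheses. The base $\iset$ is the category of functors from the group $\mathbb{Z}/2$ into $\set$, so it is bicomplete with (co)limits computed on underlying sets. Each fiber is $\cyc_{\mathfrak{C}}(\mathcal{V}) \simeq Alg(\mathscr{C}_{\mathfrak{C}}^{\mathcal{V}})$, a category of algebras over a colored operad in $\mathcal{V}$; here limits are created by the forgetful functor to the bicomplete category of $\mathbf{M}\mathfrak{C}$-indexed families in $\mathcal{V}$, while colimits exist because this forgetful functor is monadic for a monad that preserves reflexive coequalizers (as $\otimes$ is cocontinuous in each variable in a closed symmetric monoidal category), so colimits of algebras may be built by the usual reflexive-coequalizer argument. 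This last point is where cocompleteness of $\mathcal{V}$ — and its closedness — genuinely enters.

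With the fibers and base under control, I would invoke the bifibration structure from Lemma~\ref{lemma bifibration}: for every morphism $\alpha$ of $\iset$ the reindexing functor $\alpha^\ast$ between fibers is a right adjoint (its left adjoint being $\alpha_!$), hence preserves all limits, and dually $\alpha_!$ preserves all colimits. This is exactly the hypothesis needed to transfer (co)limits from the fibers to $\cyc(\mathcal{V})$. Concretely, for a diagram $D \colon J \to \cyc(\mathcal{V})$ I would form $\mathfrak{C} = \lim_J pD$ in $\iset$ with projections $\pi_j$, use the Cartesian lifts (and the attendant coherence isomorphisms $\pi_j^\ast\,(pD(f))^\ast \cong \pi_{j'}^\ast$) to replace $D$ by the diagram $j \mapsto \pi_j^\ast D(j)$ valued in the single fiber $p^{-1}(\mathfrak{C})$, take its limit $E$ there, and verify — using that each $g^\ast$ preserves limits — that $E$ equipped with the composites $E \to \pi_j^\ast D(j) \to D(j)$ is a limit of $D$ in $\cyc(\mathcal{V})$. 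Colimits are handled by the formally dual construction, building the diagram $j \mapsto (\iota_j)_! D(j)$ in the fiber over $\colim_J pD$ via coCartesian lifts and using that each $\alpha_!$ preserves colimits. I would cite the general transfer statement rather than write out the diagram chase.

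The main obstacle is not the formal transfer — which is routine once one is in a bifibration — but the asymmetry of the two input facts: bicompleteness of $\iset$ and existence of limits in the fibers are immediate, whereas \emph{co}completeness of $Alg(\mathscr{C}_{\mathfrak{C}}^{\mathcal{V}})$ uses the closed monoidal structure of $\mathcal{V}$ in an essential way and is the only place where more than bare bicompleteness of $\mathcal{V}$ is invoked. The other point requiring care is checking that the reindexing functors preserve the relevant (co)limits; this is precisely where being a \emph{bi}fibration (rather than merely a fibration or an opfibration) is used, since it makes each $\alpha^\ast$ a right adjoint and each $\alpha_!$ a left adjoint, so the preservation is automatic.
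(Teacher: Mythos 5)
Your proposal is correct and takes essentially the same route as the paper: the paper's proof likewise combines Lemma~\ref{lemma bifibration} (the bifibration $\cyc(\mathcal{V})\to\iset$), bicompleteness of the base $\iset$ and of the fibers $\cyc_{\mathfrak{C}}(\mathcal{V})\simeq Alg(\mathscr{C}_{\mathfrak{C}}^{\mathcal{V}})$ as algebra categories over a colored operad, and the classical transfer of (co)limits along such a (bi/op)fibration, which it cites to Jacobs and to Harpaz--Prasma. The details you spell out (reflexive-coequalizer cocompleteness of the fibers, and the adjoint reindexing functors used to verify the universal property of fiberwise (co)limits) are precisely the content of those citations, so nothing further is needed.
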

\begin{proof}
If $p : \mathcal{E} \to \mathcal{B}$ is any opfibration with $\mathcal{B}$ and each fiber $\mathcal{E}_b$ cocomplete, then $\mathcal{E}$ is cocomplete as well. The dual statement for fibrations and completeness also holds.  This is classical -- see Exercise 9.2.4, p.531 of \cite{Jacobs:CLTT}; a proof appears of the first statement appears in Remark 2.4.3 and Proposition 2.4.4 of \cite{HarpazPrasma:GCMC}.
In our case, we know that $\cyc_{\mathfrak{C}}(\mathcal{V})$ is cocomplete and complete since it is algebras over an operad,  hence the result follows from Lemma~\ref{lemma bifibration} and the fact that $\iset$ is bicomplete.
\end{proof}

This proof goes through equally well for $\cycne(\mathcal{V})$, as long as one uses the variation on $\mathscr{C}_{\mathfrak{C}}$ which omits the color $(\,\,)$ from $\mathbf{M}\mathfrak{C}$ along with the associated generators $\comp{0}{0} \in \mathscr{C}_{\mathfrak{C}}((c^\dagger), (c); (\,\,))$ (see Remark~\ref{remark cycne without terminal}). 
Alternatively, since the subcategory $\cycne(\mathcal{V})$ of $\cyc(\mathcal{V})$ is reflective, $\cycne(\mathcal{V})$ has all limits and colimits that $\cyc(\mathcal{V})$ has (\cite[Proposition 4.5.15]{Riehl:CTC}).

\section{The positive model structure}\label{section positive model structure}

In this final section we establish the main result of the paper: the existence of a Dwyer--Kan type model category structure on the category of positive cyclic operads enriched in $\sset$.
We also conjecture that this model structure can be extended to the category of \emph{all} cyclic operads, and give some indication of how one might proceed. 

Suppose that $f: P \to Q$ is a map in $\cyc(\mathcal{V})$ (resp.\ in $\operads(\mathcal{V})$). If X is a property of maps of $\mathcal{V}$, we say that $f$ satisfies X locally if for each list of colors $c_0, c_1, \dots, c_n \in \colors(P)$, the induced map
\[
	P(c_0, \dots, c_n) \to Q(fc_0, \dots, fc_n) \,\, \Big( \text{resp.\ } P(c_1, \dots, c_n; c_0) \to Q(fc_1, \dots, fc_n; fc_0) \Big)
\]
satisfies X. 
For example, if $\mathcal{V}$ is a model category, we can refer to $f$ being locally a fibration or locally a weak equivalence.

We will momentarily introduce a model structure on the category $\cyc(\sset)$ whose objects are cyclic operads enriched in simplicial sets.
Let $U: \operads(\set) \to \cat$ be the `underlying category' functor.
We have a diagram
\[
\begin{tikzcd}
	\cyc(\sset) \rar{F} \dar{\pi_0} & \operads(\sset) \rar{U} \dar{\pi_0} & \cat(\sset) \dar{\pi_0} \\
	\cyc(\set) \rar[swap]{F} & \operads(\set) \rar[swap]{U} & \cat 
\end{tikzcd}
\]
where the vertical arrows are induced from the path components functor $\pi_0 : \sset \to \set$. 
Write $[-] : \cyc(\sset) \to \cat$ for the composite of this diagram, and use the same notation for the functor $\operads(\sset) \to \cat$ (in particular, this means that $[-] = [F(-)]$, which we hope causes no confusion).

\begin{definition}\label{def dk and fibrations}
Suppose that $f: P \to Q$ is a map in $\cyc(\sset)$ or $\operads(\sset)$.
We say that $f$ is a \emph{Dwyer--Kan equivalence} (resp.\ \emph{isofibration}) if 
\begin{itemize}
	\item $f$ is locally a weak equivalence (resp.\ Kan fibration) of simplicial sets, and
	\item the functor $[f]: [P] \to [Q]$ is an equivalence of categories (resp.\ isofibration of categories).
\end{itemize}
If $f: P \to Q$ is a map in $\cyc(\sset)$, we say that $f$ is a \emph{positive Dwyer--Kan equivalence} (resp.\ \emph{positive isofibration}) if $G(f) : G(P) \to G(Q)$ is a Dwyer--Kan equivalence (resp.\ an isofibration).
\end{definition}

Here, $G : \cyc(\sset) \to \cycne(\sset)$ is the reflection from Lemma~\ref{lemma cycne is reflective in cyc}.
The only difference between a map $f: P \to Q$ being a Dwyer--Kan equivalence or a positive Dwyer--Kan equivalence is that in the latter case there is no restriction at all on the map $P(\,\,) \to Q(\,\,)$.

\begin{remark}[On Dwyer--Kan equivalences]
Let us make a philosophical comment.
In the case that we are working in situations where all maps $f$ are bijective on color sets, it is relatively clear how one should define the notion of equivalence: it should simply be a local weak equivalence.
In the general setting, we should insist that our equivalences induce bijections on \emph{``homotopy classes''} of colors.
Being homotopic should be a binary relationship, rather than a many-to-many or many-to-one relationship, which is why we can detect it at the categorical level.
Indeed, the idea of homotopy classes of colors is made precise by looking at the isomorphism classes of objects of $[P]$, and the second condition from Definition~\ref{def dk and fibrations} ensures that we have a bijection between the isomorphism classes of objects of $[P]$ and $[Q]$. (Note that the first condition actually implies that $[f]$ is fully-faithful).
\end{remark}

\begin{theorem}\label{dk model structure theorem}
	The category $\cycne(\sset)$ admits a model structure with isofibrations as the fibrations and with Dwyer--Kan equivalences as the weak equivalences.
	The category $\cyc(\sset)$ admits a model structure whose fibrations are positive isofibrations and whose weak equivalences are positive Dwyer--Kan equivalences.
\end{theorem}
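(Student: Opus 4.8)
The plan is to transport the Cisinski--Moerdijk model structure on $\operads(\sset)$ along the adjunctions of Section~\ref{section adjunctions}, using the criterion from our previous paper~\cite{DrummondColeHackney:CFERIQMSAAIIC}. Recall from that paper that if $\mathcal{M}$ is a cofibrantly generated model category and $F \colon \mathcal{N} \to \mathcal{M}$ is a functor with both a left adjoint $L$ and a right adjoint $R$, and $\mathcal{N}$ is bicomplete, then under a short list of verifiable hypotheses one obtains a right-induced model structure on $\mathcal{N}$ in which $f$ is a weak equivalence (resp.\ fibration) exactly when $Ff$ is. The bicompleteness of $\cyc(\sset)$ (hence of $\cycne(\sset)$, by Lemma~\ref{lemma cycne is reflective in cyc} together with Proposition~\ref{cyc is bicomplete}) is already in hand, and the existence of $L$ and $R$ is exactly Propositions~\ref{proposition left adjoint} and~\ref{proposition right adjoint}.

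First I would treat $\cycne(\sset)$. The forgetful functor $F \colon \cyc(\sset) \to \operads(\sset)$ restricts, after composing with the reflector $G$, to a functor $\cycne(\sset) \to \operads(\sset)$ which still has $GL$ as a left adjoint (note $GL \cong L$ up to replacing $\varnothing$ by $*$, as observed after Remark~\ref{remark cycne without terminal}) and $R$ as a right adjoint, since $RP$ is always positive. One then checks the hypotheses of the transfer criterion: that $F$ preserves filtered colimits (it does, since colimits in $\cyc_{\mathfrak{C}}$ are computed as for algebras over an operad and the bifibration of Lemma~\ref{lemma bifibration} computes general colimits fiberwise over the bicomplete base $\iset$, with $F$ lying over $\iota^*$ which has both adjoints), that the generating (trivial) cofibrations of $\operads(\sset)$ have the required lifting behavior after applying $L$, and crucially the acyclicity condition: every map obtained by pushing out a generating trivial cofibration along $LF \to \id$-type maps, and transfinitely composing, must become a weak equivalence after applying $F$. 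By the very definition of Dwyer--Kan equivalence and isofibration (Definition~\ref{def dk and fibrations}), the classes on $\cycne(\sset)$ are created by $F$ followed by the Cisinski--Moerdijk classes on $\operads(\sset)$, so once the model structure exists its weak equivalences and fibrations are exactly the Dwyer--Kan equivalences and isofibrations as stated.

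The second statement then follows formally: the adjunction $G \dashv I$ between $\cyc(\sset)$ and $\cycne(\sset)$, with $G$ the reflector, lets us left-induce a model structure on $\cyc(\sset)$ along $G$ — or, more directly, define the positive classes by applying $G$ and invoking the $\cycne(\sset)$ structure, checking that $G$ preserves the relevant cofibrations and that the resulting classes satisfy the model category axioms. Since $G$ only alters the empty-profile level and $GI \cong \id$, the factorization and lifting axioms transfer with no difficulty; the only subtlety is that cofibrations in $\cyc(\sset)$ must be generated so as to include maps detecting the empty-profile object, but since $I$ is fully faithful and $G$ is a reflector this is routine.

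\textbf{The main obstacle.} The delicate point is the acyclicity condition for the transfer to $\cycne(\sset)$: one must show that relative $L(J)$-cell complexes (for $J$ the generating trivial cofibrations of $\operads(\sset)$) are sent by $F$ to weak equivalences. This is where the \emph{exceptional right adjoint} $R$ earns its keep — following the strategy of~\cite{DrummondColeHackney:CFERIQMSAAIIC}, the existence of $R$ (equivalently, that $F$ preserves not just colimits but is itself part of a two-sided adjunction) is precisely what allows one to identify $F$-images of pushouts along $L(J)$ with honest pushouts in $\operads(\sset)$ along $J$, reducing acyclicity to the corresponding, already-known fact in the Cisinski--Moerdijk structure. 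Verifying the compatibility of $F$ with these specific pushouts — i.e.\ that $F$ sends the canonical comparison map to an isomorphism in the situations that arise — is the one computation that genuinely requires the explicit formulae for $L$ and $R$ from Section~\ref{section adjunctions}, and is the step I expect to occupy the bulk of the proof.
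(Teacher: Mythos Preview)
Your overall strategy is right—use the criterion from \cite{DrummondColeHackney:CFERIQMSAAIIC} together with the double adjunction $L \dashv F \dashv R$—but you have misremembered what that criterion actually asks for, and consequently misidentified the main work.

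The criterion of \cite[Theorem~2.3]{DrummondColeHackney:CFERIQMSAAIIC} does \emph{not} ask you to verify acyclicity of relative $L(J)$-cell complexes directly; its whole point is that, in the presence of the further right adjoint $R$, the acyclicity condition reduces to the single requirement that the composite $FR \colon \operads(\sset) \to \operads(\sset)$ be right Quillen. So the ``main obstacle'' you describe—analyzing $F$-images of pushouts along $L(j)$—never arises. (And your claim that $F$ sends such pushouts to pushouts \emph{along $j$} is in any case false: $F$ preserves pushouts since it has a right adjoint, but the resulting square is a pushout along $FLj$, and the unit $\id \to FL$ is not an isomorphism.) What the paper actually checks is that $FR$ preserves fibrations and trivial fibrations, and this is almost immediate from the product formula~\eqref{equation: right adjoint}: locally, (trivial) Kan fibrations are stable under finite products; for the categorical part one observes that $[FR(-)] = FR[-]$ and invokes the $\cat$-level fact from \cite[2.7]{DrummondColeHackney:CFERIQMSAAIIC}. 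The whole argument is a short paragraph, not the lengthy computation you anticipate.

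Your order of attack is also inverted relative to the paper, and this introduces an unnecessary complication. The paper first establishes the model structure on $\cyc(\sset)$ directly from $\operads(\sset)$ via $L \dashv F \dashv R$, and then notes that $\cycne(\sset)$ follows by the identical argument using $GL \dashv FI \dashv GR$ (since $FIGR = FR$, the same right-Quillen check applies verbatim). Your plan to recover the $\cyc(\sset)$ structure from $\cycne(\sset)$ by transferring along $G \dashv I$ is a detour with no clean general principle behind it: $G$ is a left adjoint with no further left adjoint, so neither standard left- nor right-transfer applies, and the ad hoc verification you sketch is unnecessary once you see that both structures come directly from $\operads(\sset)$ by the same one-line check.
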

\begin{proof}
We have already established bicompleteness of these categories in the previous section (see Proposition~\ref{cyc is bicomplete}).

	We first prove the second statement of the theorem.
	Recall from \cite[Theorem 1.14]{CisinskiMoerdijk:DSSO} that $\operads(\sset)$ admits a model structure with Dwyer--Kan equivalences as the weak equivalences and the isofibrations as the fibrations.
	By \cite[Theorem 2.3]{DrummondColeHackney:CFERIQMSAAIIC}, it is enough to show that the composite $FR: \operads(\sset) \to \cyc(\sset) \to \operads(\sset)$ is a right Quillen functor.
	Suppose that $f$ is locally a fibration. Since fibrations are stable under products, the map
	\begin{equation}\label{eq FRP to FRQ}
		FRP(c_1^\bullet, \dots, c_n^\bullet; (c_0^\bullet)^\dagger) \to FRQ(fc_1^\bullet, \dots, fc_n^\bullet; (fc_0^\bullet)^\dagger)
	\end{equation}
	is a fibration (see Equation~\eqref{equation: right adjoint} for the formula for $R$).
	Likewise, trivial fibrations are stable under products, so if $f$ is locally a trivial fibration then \eqref{eq FRP to FRQ} is again a trivial fibration.
	Thus $FRf$ is locally a (trivial) fibration if $f$ is a (trivial) fibration.

	We now turn from the local structure to the categorical structure.
	The diagram
	\[
	\begin{tikzcd}
		\operads(\sset) \rar{R} \dar[swap]{\pi_0} & \cyc(\sset) \rar{F} \dar{\pi_0} & \operads(\sset) \dar{\pi_0} \\
		\operads(\set) \rar{R} \dar[swap]{U} & \cyc(\set) \rar{F} \dar{U} & \operads(\set) \dar{U} \\
		\cat \rar[swap]{R} & \mathbf{iCat} \rar[swap]{F} & \cat
	\end{tikzcd}
	\]
	commutes, where the maps on the bottom line are those from \cite[2.7]{DrummondColeHackney:CFERIQMSAAIIC}.
	We thus have $[FR(-)] = U\pi_0FR(-) = FRU\pi_0(-) = FR[-]$.
	If $f$ is a (trivial) fibration in $\operads(\sset)$, then $[f]$ is a (trivial) fibration in $\cat$. As we saw in \cite[2.7]{DrummondColeHackney:CFERIQMSAAIIC}, this implies that $FR[f] = [FR(f)]$ is a (trivial) fibration in $\cat$.
	Thus if $f$ is a (trivial) fibration, so is $FR(f)$. The characterization of weak equivalences and fibrations on $\cyc(\sset)$ follows immediately, since these are both created by $F$.

	The proof of the existence of the model structure on $\cycne(\sset)$ follows exactly as in the previous paragraphs, using the adjunctions $GL \dashv FI \dashv GR$ in place of $L \dashv F \dashv R$ since $FIGR = FR$.
	\end{proof}

Definition~\ref{def dk and fibrations} can be adapted to other monoidal model categories $\mathcal{V}$.
The main point is to
\begin{itemize}
\item change the local condition to the suitable condition in $\mathcal{V}$, and 
\item to replace the functor $\pi_0 : \sset \to \set$ with $X \mapsto Ho(\mathcal{V})(\mathbf{1}, X)$; this yields a functor from $\mathcal{V}$-enriched categories to ordinary categories $\cat(\mathcal{V}) \to \cat$.
\end{itemize} 
The second point means that we have functors $[-] : \cyc(\mathcal{V}) \to \cat$ and $[-]: \operads(\mathcal{V}) \to \cat$.

In nice situations, one will have a model structure on $\operads(\mathcal{V})$ with Dwyer--Kan equivalences as the weak equivalences.
Existence of this model structure is a major topic of \cite{Caviglia:MSECO,Yau:DKHTAOC}, with ideas going back to the setting of $\cat(\mathcal{V})$ from \cite{Muro:DKHTEC,BergerMoerdijk:HTEC}.
We will not dwell on the technical requirements here, and instead say that $\mathcal{V}$ is \emph{DK-admissible} if a model structure exists on $\operads(\mathcal{V})$ with weak equivalences and fibrations as in Definition~\ref{def dk and fibrations}.
The proof of the following theorem is essentially identical to that of Theorem~\ref{dk model structure theorem}.

\begin{theorem}
If $\mathcal{V}$ is DK-admissible, then $\cycne(\mathcal{V})$ admits a model structure with weak equivalences and fibrations as in Definition~\ref{def dk and fibrations}. \qed
\end{theorem}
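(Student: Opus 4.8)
The plan is to rerun the proof of Theorem~\ref{dk model structure theorem} \emph{mutatis mutandis}, replacing $\sset$ by $\mathcal{V}$. Since $\mathcal{V}$ is DK-admissible it is in particular a bicomplete closed symmetric monoidal model category, so Proposition~\ref{cyc is bicomplete} (and the remark following its proof) shows $\cyc(\mathcal{V})$ and $\cycne(\mathcal{V})$ are bicomplete, and $\operads(\mathcal{V})$ carries the model structure of Definition~\ref{def dk and fibrations} by the very definition of DK-admissibility. I would then apply the transfer criterion \cite[Theorem 2.3]{DrummondColeHackney:CFERIQMSAAIIC} to the chain of adjunctions $GL \dashv FI \dashv GR$ between $\operads(\mathcal{V})$ and $\cycne(\mathcal{V})$, with $G \dashv I$ as in Lemma~\ref{lemma cycne is reflective in cyc} and $L, R$ as in Propositions~\ref{proposition left adjoint} and~\ref{proposition right adjoint}. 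Exactly as in the cited proof, $FIGR = FR$, so the criterion reduces the whole theorem to showing that $FR \colon \operads(\mathcal{V}) \to \operads(\mathcal{V})$ is right Quillen; the weak equivalences and fibrations of the resulting model structure on $\cycne(\mathcal{V})$ are then automatically those created by $FI$, which by inspection are precisely the ones named in Definition~\ref{def dk and fibrations}.

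To see that $FR$ is right Quillen I would argue separately on the local and categorical parts of Definition~\ref{def dk and fibrations}. Locally: if $f$ is a (trivial) fibration of $\operads(\mathcal{V})$ then each map $P(c_1,\dots,c_n;c_0) \to Q(fc_1,\dots,fc_n;fc_0)$ is a (trivial) fibration of $\mathcal{V}$, and since a finite product of (trivial) fibrations is a composite of pullbacks of those maps along projections, hence again a (trivial) fibration in any model category, the formula \eqref{equation: right adjoint} for $R$ presents each component
\[
	FRP(c_1^\bullet, \dots, c_n^\bullet; (c_0^\bullet)^\dagger) \longrightarrow FRQ(fc_1^\bullet, \dots, fc_n^\bullet; (fc_0^\bullet)^\dagger)
\]
as a finite product of such maps. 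So $FRf$ is locally a (trivial) fibration.

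Categorically: for general $\mathcal{V}$ the functor $[-]$ is built as for $\sset$ but with $\pi_0$ replaced by $X \mapsto Ho(\mathcal{V})(\mathbf{1}_{\mathcal{V}}, X)$, and I would reproduce the commuting square of functors from the proof of Theorem~\ref{dk model structure theorem}, whose bottom row is the composite $\cat \xrightarrow{R} \mathbf{iCat} \xrightarrow{F} \cat$ of \cite[2.7]{DrummondColeHackney:CFERIQMSAAIIC}; that square yields $[FR(-)] = FR[-]$. Since $[-]$ sends (trivial) fibrations of $\operads(\mathcal{V})$ to (trivial) fibrations of $\cat$ (this is part of Definition~\ref{def dk and fibrations}) and the categorical functor $FR \colon \cat \to \cat$ preserves (trivial) fibrations of categories (checked in \cite[2.7]{DrummondColeHackney:CFERIQMSAAIIC}), it follows that $[FRf] = FR[f]$ is a (trivial) fibration of categories whenever $f$ is. Combining the two halves, $FRf$ is a (trivial) fibration of $\operads(\mathcal{V})$, so $FR$ is right Quillen and the criterion applies.

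The only step that is not pure bookkeeping is the verification that the commuting square above still commutes once $\pi_0$ is replaced by $Ho(\mathcal{V})(\mathbf{1}_{\mathcal{V}}, -)$; concretely, one must check that this functor carries the finite products together with the induced symmetric-group actions and compositions appearing in \eqref{equation: right adjoint} to the corresponding data of the categorical right adjoint $R$ of \cite[2.7]{DrummondColeHackney:CFERIQMSAAIIC}. This comes down to the fact that $Ho(\mathcal{V})(\mathbf{1}_{\mathcal{V}}, -)$ preserves finite products (mapping out of the monoidal unit does, and finite products in the homotopy category are computed by the underlying products up to the requisite fibrancy) together with its evident compatibility with restriction and composition. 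Everything else is identical to the simplicial case, so no genuinely new argument is required.
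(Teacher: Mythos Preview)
Your proposal is correct and follows exactly the approach the paper intends: the paper's own proof is the single sentence ``essentially identical to that of Theorem~\ref{dk model structure theorem}'', and you have spelled out precisely that argument, including the use of the adjunctions $GL \dashv FI \dashv GR$, the reduction to $FR$ being right Quillen, and the local/categorical split. Your flagging of the one nonformal point---compatibility of $Ho(\mathcal{V})(\mathbf{1}_{\mathcal{V}},-)$ with the finite products in \eqref{equation: right adjoint}---is apt and is the only place where anything beyond bookkeeping is needed.
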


Likewise, the category of small cyclic multicategories (of \cite{ChengGurskiRiehl:CMMAM}, see Proposition~\ref{prop: CGR comparison}) enriched in simplicial sets admits a proper model structure.

\begin{theorem}
The category of non-$\Sigma$ positive cyclic operads in $\sset$ admits a \emph{proper} model structure with isofibrations as the fibrations and with Dwyer--Kan equivalences as the weak equivalences.
\end{theorem}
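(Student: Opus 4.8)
The plan is to run the proof of Theorem~\ref{dk model structure theorem} with non-$\Sigma$ operads in place of symmetric operads, and then to verify the two properness conditions; bicompleteness is handled as in Proposition~\ref{cyc is bicomplete}. By Remark~\ref{remark non-sigma adjoints}, the adjunctions $L\dashv F\dashv R$ of \S\ref{section adjunctions}, and hence their positive counterparts $GL\dashv FI\dashv GR$ obtained from Lemma~\ref{lemma cycne is reflective in cyc}, restrict to adjunctions between non-$\Sigma$ colored operads and non-$\Sigma$ positive cyclic operads in $\sset$. The first ingredient is a Dwyer--Kan model structure on non-$\Sigma$ colored $\sset$-operads, with isofibrations as fibrations and DK-equivalences as weak equivalences; this is produced exactly as in \cite{CisinskiMoerdijk:DSSO,Caviglia:MSECO}, the non-symmetric case being no more difficult. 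Granting it, the product-stability argument from the proof of Theorem~\ref{dk model structure theorem} --- fibrations and trivial fibrations of simplicial sets are stable under products, with $R$ given by~\eqref{equation: right adjoint} --- shows $FR=FIGR$ is right Quillen, so \cite[Theorem 2.3]{DrummondColeHackney:CFERIQMSAAIIC} supplies a model structure on non-$\Sigma$ positive cyclic operads whose fibrations and weak equivalences are created by $FI$, and the diagram relating $F$, $R$, $\pi_0$ and $[-]$ identifies these with isofibrations and DK-equivalences precisely as before.

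The genuinely new content is that the non-$\Sigma$ operad model structure is proper; here the failure of symmetry is what helps. Right properness should go through as for $\sset$-enriched categories \cite{Bergner:MCSCSC}: pullbacks along isofibrations are homotopy pullbacks, which is checked locally (where $\sset$ is right proper) together with the condition on $[-]$ (where $\cat$ is right proper). For left properness, the point is that the free non-$\Sigma$ colored operad monad is a polynomial grafting monad involving \emph{no} quotients by symmetric group actions; consequently pushouts along the generating cofibrations are built by grafting trees at their leaves without any identifications, and a pushout-product style analysis shows they preserve local weak equivalences --- combined with left properness of $\sset$ and of $\cat$ (through $[-]$), this gives left properness. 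It is exactly this point that is unavailable in the symmetric setting, which is why Theorem~\ref{dk model structure theorem} is not asserted to be proper.

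It remains to propagate properness across the transfer. The functor $FI$ preserves limits and colimits (being right adjoint to $GL$ and left adjoint to $GR$) and creates fibrations and weak equivalences; moreover $GR$ is right Quillen (same product-stability argument together with the identity $[GR(-)]=FR[-]$ used for $FR$ above), so $FI$ is \emph{simultaneously left and right Quillen} and in particular preserves cofibrations. Right properness then transfers by applying $FI$ to a pullback of a DK-equivalence along an isofibration and invoking right properness of non-$\Sigma$ $\sset$-operads; left properness transfers by applying $FI$ to a pushout along a cofibration and invoking left properness downstairs. Properness of the model structure on the simplicial cyclic multicategories of \cite{ChengGurskiRiehl:CMMAM} then follows through Proposition~\ref{prop: CGR comparison}. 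The main obstacle, and the only part not essentially reduced to Theorem~\ref{dk model structure theorem}, is establishing the Dwyer--Kan model structure on non-$\Sigma$ colored $\sset$-operads and, within that, its left properness.
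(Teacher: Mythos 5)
Your overall architecture is the paper's own. The existence part is exactly the intended argument: by Remark~\ref{remark non-sigma adjoints} the forgetful functor from non-$\Sigma$ positive cyclic operads has both adjoints given by the same formulas, so the right-induction of Theorem~\ref{dk model structure theorem} (via \cite[Theorem 2.3]{DrummondColeHackney:CFERIQMSAAIIC}, with $FR$ right Quillen by product-stability of (trivial) fibrations and the compatibility of $[-]$ with $R$) goes through verbatim, and the base Dwyer--Kan structure on non-$\Sigma$ simplicial operads is available in the literature already cited (\cite{Robertson:HTSEM,Caviglia:MSECO}) rather than needing to be rebuilt ``exactly as in'' \cite{CisinskiMoerdijk:DSSO}. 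Your transfer-of-properness mechanism is also sound and is in substance what the paper invokes: since the structure is right-induced along a functor that creates fibrations and weak equivalences, preserves limits and colimits, and is simultaneously left and right Quillen (because $FIGR=FR$ is right Quillen, so $GR$ is right Quillen and $FI$ is left Quillen), properness descends from the operad level; this is precisely the role of \cite[Proposition 2.4]{DrummondColeHackney:CFERIQMSAAIIC} in the paper's proof, which you have essentially re-derived.

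The genuine gap is the properness input at the operad level, above all \emph{left} properness of the Dwyer--Kan model structure on non-$\Sigma$ simplicial operads, which you yourself flag as ``the main obstacle'' and then only sketch. The claim that absence of symmetric-group quotients means pushouts along generating cofibrations ``are built by grafting trees at their leaves without any identifications'' and hence preserve local weak equivalences is not an argument: a free operadic extension changes \emph{every} hom-object by an intricate colimit of grafted trees (including new operations in profiles mixing old and newly attached data), and the corresponding left-properness statement already for the Bergner model structure on simplicial categories is a nontrivial theorem, not something one reads off from the shape of the monad. The paper does not reprove this either; it supplies the missing ingredient by citation, deducing properness from \cite[Corollary 8.9]{CisinskiMoerdijk:DSSO} together with \cite[Proposition 2.4]{DrummondColeHackney:CFERIQMSAAIIC} and right-inducedness. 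So your proposal matches the paper's route for existence and for the transfer step, but the decisive properness claim rests on an unproved assertion; it needs either a real pushout analysis or an appeal to the existing result, as in the paper.
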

\begin{proof}
As we observed in Remark~\ref{remark non-sigma adjoints}, the forgetful functor from non-$\Sigma$ positive cyclic operads to non-$\Sigma$ operads admits both adjoints, which are given by the same formulas as in the symmetric case.
The proof of Theorem~\ref{dk model structure theorem} goes through with the appropriate minor modifications.
That this model structure is proper follows from \cite[Corollary 8.9]{CisinskiMoerdijk:DSSO}, \cite[Proposition 2.4]{DrummondColeHackney:CFERIQMSAAIIC}, and the fact that this is a right-induced model structure.
\end{proof}

\subsection{Future directions}

In Theorem~\ref{dk model structure theorem} we showed that $\cyc(\sset)$ admits a model structure with weak equivalences the \emph{positive} Dwyer--Kan equivalences; let us call this the \emph{positive} model structure.
This required relatively little work to prove, as we were able to lift this model structure from the Cisinski-Moerdijk model structure on $\operads(\sset)$ using the criterion from \cite{DrummondColeHackney:CFERIQMSAAIIC}.
Most earlier definitions of cyclic operads restrict themselves to the positive case (see \S\ref{section: relation to other definitions}).
However, we still find this situation somewhat unsatisfying---we would like to have a model structure with weak equivalences the Dwyer--Kan equivalences.

\begin{conjecture}\label{better dk theorem}
	The category $\cyc(\sset)$ admits a model structure with fibrations the isofibrations and with weak equivalences the Dwyer--Kan equivalences.
\end{conjecture}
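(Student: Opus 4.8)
The plan is to promote the positive model structure on $\cyc(\sset)$ to a model structure with the (non-positive) Dwyer--Kan equivalences by a left Bousfield--type localization in the fibrant--cofibrant generators, or more directly, by exhibiting $\cyc(\sset)$ with this class of weak equivalences as right-induced along a different functor. The obstruction to reusing the proof of Theorem~\ref{dk model structure theorem} verbatim is precisely the behavior in the empty profile: the functor $FR$ controls only $P(\,\,)$ up to no constraint, so a right-induced structure along $F$ automatically inverts all maps $P(\,\,)\to Q(\,\,)$. We need the structure on $P(\,\,)$ to be genuinely homotopical, which means we must enlarge the diagram of categories we right-induce from.

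First I would try to set up a composite adjunction whose right adjoint sees both the operadic data \emph{and} the object $P(\,\,)$ together with the $\comp{0}{0}$ operations from \eqref{eq zero zero to minus one}. Concretely, one wants a category $\mathcal{A}$ (a product or comma-type construction built from $\operads(\sset)$ and an auxiliary piece recording the ``nullary sector'') carrying a known model structure, an adjunction $\mathcal{A} \rightleftarrows \cyc(\sset)$ whose left adjoint creates cofibrations, and the right adjoint detects exactly the Dwyer--Kan equivalences and isofibrations of Definition~\ref{def dk and fibrations}. The candidate on the operadic side is still $FR$; the new ingredient must encode $P(\,\,)$ as a module-like object over the colored pieces $P(c^\dagger,c)$ via $\comp{0}{0}$. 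Since $P(\,\,)$ receives maps from all such $P(c^\dagger,c)\otimes P(c)$ with $c^\dagger = c$, the cleanest formalization is probably as an algebra/module over a colored operad obtained by adjoining the color $(\,\,)$ to $\mathscr{C}_\mathfrak{C}$ (as in the variant mentioned just after Proposition~\ref{cyc is bicomplete}), and then applying the bifibration machinery of Section~\ref{section bifibrations} fiberwise, checking that the right-induction criterion of \cite[Theorem 2.3]{DrummondColeHackney:CFERIQMSAAIIC} (or its iterated/transferred form) still applies. The fibrancy-of-adjoint computation in each fixed-color fiber should be formally similar to the computation with \eqref{eq FRP to FRQ}: fibrations and trivial fibrations in $\sset$ are closed under the relevant products and pullbacks.

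The hard part will be twofold. The first difficulty is verifying the ``acyclicity'' or ``unit is a weak equivalence on cofibrant objects'' condition that the transfer lemma requires once we are no longer right-inducing from a single well-understood functor with an exact left adjoint: one must check that the candidate generating trivial cofibrations actually map to weak equivalences in $\cyc(\sset)$, which amounts to controlling the interaction of the free cyclic operad functor with the empty profile. Here the finiteness of free cyclic operads on trees in our formalism (noted in the remark after Definition~\ref{definition: cyclic operad}) should be a genuine asset, as it keeps the relevant colimits manageable, but one still needs an explicit ``filling'' argument showing, e.g., that adjoining an invertible nullary operation, or a path object in $P(\,\,)$ compatible with the $\comp{0}{0}$ structure, can be done functorially. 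The second difficulty is the categorical-level condition: one must produce, for a trivial cofibration $f: P\to Q$, a deformation retraction that is simultaneously compatible with the operad structure, the nullary operations, \emph{and} surjective-equivalence behavior of $[f]$ --- essentially reproving the analogue of \cite[2.7]{DrummondColeHackney:CFERIQMSAAIIC} in the presence of the extra nullary sector. I expect this second point to be the true obstacle, since it is exactly the place where ``undirected'' phenomena (root-to-root gluing with no leaves) have no analogue in the operadic or properadic literature; a successful proof will likely require a direct small-object-argument construction of functorial path objects for cyclic operads together with a careful verification that the empty-profile piece of such a path object can be built from the fixed-color model structures via the bifibration $\cyc(\sset)\to\iset$.
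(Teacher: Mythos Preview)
The statement you are attempting to prove is a \emph{conjecture} in the paper, not a theorem; the authors do not prove it and explicitly identify what is missing. Your proposal is a strategy sketch rather than a proof, and it does not close the gap.

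The paper's own approach is different from yours in setup but converges on the same obstruction. Rather than right-inducing from a larger category $\mathcal{A}$ that sees the nullary sector, the authors interpret the conjectural structure as an \emph{intersection} (in the sense of Intermont--Johnson) of the positive model structure already established and the model structure lifted along the adjunction $\numberfunctor : \sset \rightleftarrows \cyc(\sset) : E$ with $EP = P(\,\,)$. This gives a concrete candidate set of generating trivial cofibrations. They then isolate the missing ingredient as Conjecture~\ref{pushout conjecture}: that pushouts in $\cyc(\sset)$ along the maps $L\{x\} \to L\mathcal{H}$ (Bergner's set (A2)) are Dwyer--Kan equivalences. Since such pushouts are already positive Dwyer--Kan equivalences, the remaining content is exactly that $P(\,\,) \to Q(\,\,)$ is a weak equivalence of simplicial sets, and the authors state they ``do not know of any shortcut'' to this.

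Your first difficulty---the acyclicity condition for the transfer, i.e., that the candidate generating trivial cofibrations push out to weak equivalences---is precisely this same obstruction, rephrased. You correctly flag it as hard but offer no mechanism to resolve it: the appeal to finiteness of free cyclic operads on trees does not obviously help, because the pushout $Q$ is a colimit in $\cyc(\sset)$ computed over a non-free base $P$, and the nullary piece $Q(\,\,)$ receives contributions from all root-to-root gluings created by the new invertible $1$-cell. Your second difficulty (the categorical-level compatibility) is not where the paper locates the problem; the isofibration and essential-surjectivity conditions are already handled by the positive structure and do not interact with $P(\,\,)$. So your proposal neither proves the conjecture nor reduces it further than the paper already does.
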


One can interpret this proposed model structure as an \emph{intersection} of model structures (in the sense of \cite[Definition 8.5]{IntermontJohnson:MSCES}).
The first model structure involved is the positive model structure from the previous section. The second model structure involved is the one lifted along the adjunction 
$\numberfunctor : \sset \rightleftarrows \cyc(\sset) : E$ where $EP = P(\,\,)$, $\colors(\numberfunctor X) = \varnothing$, and $\numberfunctor X(\,\,) = X$.
Unfortunately, recognizing that this should be an intersection of model structures is not much help in actually proving that the stated model structure exists (the hypotheses of \cite[Proposition 8.7]{IntermontJohnson:MSCES} are often difficult to check, including in this instance). 

This perspective does, however, give a sensible choice for the generating (trivial) cofibrations of the conjectural model structure.
Most aspects of a partial proof of Conjecture~\ref{better dk theorem} are formal, with one exception.
We thus make the following conjecture, which is the key missing component to establishing the previous conjecture.

\begin{conjecture}\label{pushout conjecture}
	Suppose that $\{x\} \to \mathcal{H}$ is in the set (A2) from \cite[p.2046]{Bergner:MCSCSC}.
	If
	\[ \begin{tikzcd}
	L\{x\} \dar \rar & P \dar \\
	L\mathcal{H} \rar & Q
	\end{tikzcd} \]
	is a pushout in $\cyc(\sset)$, then $P\to Q$ is a Dwyer--Kan equivalence.
\end{conjecture}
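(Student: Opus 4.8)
\textbf{The plan} is to reduce the statement to the empty profile. Both $F\colon\cyc(\sset)\to\operads(\sset)$ and the reflector $G\colon\cyc(\sset)\to\cycne(\sset)$ are left adjoints, so they preserve the pushout; in particular $GQ\cong GP\sqcup_{GL\{x\}}GL\mathcal H$ in $\cycne(\sset)$. Now $F$ is right Quillen for the positive model structures (it creates fibrations and weak equivalences, by the proof of Theorem~\ref{dk model structure theorem}), so its left adjoint $L$ is left Quillen; and $G$ is left Quillen (its right adjoint $I$ preserves fibrations and trivial fibrations because $G$ detects them and $GI=\id$); hence $GL$ is left Quillen $\operads(\sset)\to\cycne(\sset)$. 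The underlying-category functor $\operads(\sset)\to\cat(\sset)$ is right Quillen --- it carries a Cisinski--Moerdijk fibration to a Bergner fibration, since $[-]$ depends only on the arity-one part --- so the Bergner generating trivial cofibration $\{x\}\to\mathcal H$ is a trivial cofibration in $\operads(\sset)$, and applying $GL$ shows $GL\{x\}\to GL\mathcal H$ is a trivial cofibration in $\cycne(\sset)$. Trivial cofibrations are stable under pushout, so $GP\to GQ$ is a trivial cofibration, in particular a Dwyer--Kan equivalence; that is, $f\colon P\to Q$ is a \emph{positive} Dwyer--Kan equivalence. Unwinding Definition~\ref{def dk and fibrations}, this already gives that $[P]\to[Q]$ is an equivalence of categories and that $P(\uc)\to Q(\uc)$ is a weak equivalence for every \emph{nonempty} profile $\uc$ in $\colors(P)$. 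The entire remaining content of the conjecture is thus that $P(\,\,)\to Q(\,\,)$ is a weak equivalence of simplicial sets.

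\textbf{The empty profile.} The folding map $r\colon\mathcal H\to\{x\}$ with $r\iota=\id$ induces, by the universal property of the pushout, a retraction $s\colon Q\to P$ of $f$, so $P(\,\,)$ is a retract of $Q(\,\,)$; it remains to understand which operations get added to the empty profile. To that end, pass to a fixed colour set: the colimit over the bifibration $\cyc(\sset)\to\iset$ is computed by first forming $\colors(Q)=\colors(P)\sqcup\{e,e^{\dagger}\}$ in $\iset$ and then a colimit in the fibre, so $Q\cong P^{+}\sqcup_{L\{x\}^{+}}L\mathcal H^{+}$ in $\cyc_{\colors(Q)}(\sset)=Alg(\mathscr C^{\sset}_{\colors(Q)})$, where $(-)^{+}$ denotes extension along $\colors(P)\hookrightarrow\colors(Q)$, which merely adjoins the two new colours with empty operation objects. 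Applying the standard cellular analysis of a pushout of algebras over a coloured operad, and using that the operations of $L\mathcal H$ live only in length-two profiles while the new colours $e,e^{\dagger}$ never occur in $P$, one sees that any corolla coming from $\mathcal H$ can be grafted onto an operation of $P$ only along the colour $c$ or $c^{\dagger}$ (the images of $x^{0},x^{1}$); hence the new operations of $Q$ arise from operations of $P$ by inserting finite paths of $\mathcal H$-morphisms along $c$- or $c^{\dagger}$-coloured internal edges, and in the empty profile they are built in this way from operations of $P(\,\,)$ and from root-to-root composites $P(c^{\dagger})\comp{0}{0}P(c)$.

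\textbf{The main obstacle} is to show that attaching these new operations leaves the weak homotopy type of the empty profile unchanged, i.e.\ that $P(\,\,)\to Q(\,\,)$ is a weak equivalence --- this is the one step I expect to be genuinely hard. Morally it holds because every hom-space of $\mathcal H$ is contractible, so each inserted path of $\mathcal H$-morphisms from $c$ to $c$ contracts canonically to the constant path at $\id_{c}$, making every subdivided operation homotopic to the original one in $P(\,\,)$. Turning this into a proof requires organizing a filtration of $Q(\,\,)$ over $P(\,\,)$ whose successive stages are pushouts along coproducts of generating trivial cofibrations of $\sset$: this is the cyclic-operadic analogue of the Dwyer--Kan/Bergner proof that pushout along $\{x\}\to\mathcal H$ is a Dwyer--Kan equivalence of simplicial categories, but now entangled with the root-to-root gluings into the empty profile (the $\comp{0}{0}$ operations of Remark~\ref{remark: these cyclic operads have 0-0}), which have no categorical counterpart. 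Carrying out this bookkeeping and checking that the new cells assemble into trivial cofibrations is precisely the delicate point that has been isolated as this conjecture.
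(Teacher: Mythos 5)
The statement you set out to prove is stated in the paper as Conjecture~\ref{pushout conjecture}, and the paper contains no proof of it. The authors only make the reduction that you also make: since $L\{x\}\to L\mathcal{H}$ lies among the generating trivial cofibrations of the positive model structure of Theorem~\ref{dk model structure theorem} (being $L$ applied to a generating trivial cofibration of the Cisinski--Moerdijk structure), any pushout $P\to Q$ of it is automatically a \emph{positive} Dwyer--Kan equivalence, so the entire content of the conjecture is the single assertion that $P(\,\,)\to Q(\,\,)$ is a weak equivalence of simplicial sets --- and they state explicitly that they know no way to establish this. Your first two paragraphs reproduce exactly this reduction, by a somewhat more roundabout route (arguing that $G$, $GL$, etc.\ are left Quillen, where the paper simply observes that these maps are generating trivial cofibrations for the positive structure); up to that point you agree with the paper, modulo small slips of phrasing (e.g.\ ``$\{x\}\to\mathcal{H}$ is a trivial cofibration in $\operads(\sset)$'' should be its image under the left adjoint of the underlying-category functor).

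The gap is that your third step is not a proof but a proposal for one, and that step is the whole conjecture. The retraction $s\colon Q\to P$ and the description of the new empty-profile operations as operations of $P(\,\,)$ and root-to-root composites $P(c^\dagger)\comp{0}{0}P(c)$ decorated by inserted strings of $\mathcal{H}$-morphisms is asserted rather than established: the free pushout in $\cyc(\sset)$ produces composites in which generators of $\mathcal{H}$ and operations of $P$ alternate in arbitrarily long chains before being closed up into the empty profile by the $\comp{0}{0}$ operations of Remark~\ref{remark: these cyclic operads have 0-0}, and controlling exactly these closures --- which, as you note yourself, have no counterpart in the simplicial-category or operad setting, so the Dwyer--Kan/Bergner argument does not transfer directly --- is the hard point. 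You neither construct the filtration of $Q(\,\,)$ over $P(\,\,)$ nor verify that its stages are pushouts of trivial cofibrations of $\sset$; the appeal to contractibility of the hom-spaces of $\mathcal{H}$ is only a heuristic. So what you have written is the reduction the paper already contains, together with a plausible but unexecuted strategy for the remaining step: a restatement of the open problem rather than a solution of it.
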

Since these maps $L\{x\} \to L\mathcal{H}$ are contained in the set of generating trivial cofibrations for the positive model structure, we already know that $P \to Q$ is a positive Dwyer--Kan equivalence.
In order to prove Conjecture~\ref{pushout conjecture}, it thus suffices to show that $P(\,\,) \to Q(\,\,)$ is a weak equivalence of simplicial sets.
We do not know of any shortcut to establish this weak equivalence.

\begin{remark}[Towards a dendroidal model]
In analogy with the Moerdijk--Weiss category $\Omega$ from \cite{MoerdijkWeiss:DS}, the category of positive cyclic operads admits a full subcategory $\Omega_{cyc}^{\Sigma}$ of \emph{unrooted trees}.
The nonsymmetric version (for planar unrooted trees), $\Omega_{cyc}$, is explained in the work of Tashi Walde \cite{Walde:2SSIIO}, while the symmetric version appears in work of the second author, Robertson, and Yau \cite{HackneyRobertsonYau:GCHMO}. 
The category $\Omega_{cyc}^{\Sigma}$ admits a bijective-on-objects full functor to the category $\Xi$ from \cite{HackneyRobertsonYau:HCO}.
Further, there is a functor $f : \Omega \to \Omega_{cyc}^{\Sigma}$ which just forgets the root of each tree.
We conjecture that the category of simplicial-set valued presheaves on $\Omega_{cyc}^{\Sigma}$ admits a Rezk-type (that is, `complete Segal space'-type) model structure, which is equivalent to the model structure from Theorem~\ref{dk model structure theorem}.
This model structure should be lifted along $f^*$
\[
\begin{tikzcd}[column sep=large]
	\sset^{\Omega_{cyc}^{\Sigma, op}}  \rar["f^*" description]  & \sset^{\Omega^{op}}
	\lar[bend right=20, "f_!" swap] 
	\lar[bend left=20, "f_*"] 
\end{tikzcd}
\]
when $\sset^{\Omega^{op}}$ is endowed with the model structure from \cite[Definition 6.2]{CisinskiMoerdijk:DSSIO}.
We expect that existence of this model structure should follow from the techniques of \cite{DrummondColeHackney:CFERIQMSAAIIC}, and that the equivalence could be established using \cite[Theorem 5.6]{DrummondColeHackney:CFERIQMSAAIIC}.
The chief difficulty is the first part of this: one would like to establish that $(f^*f_!, f^*f_*)$ is a Quillen adjunction, but the combinatorics of the situation are not particularly straightforward.
\end{remark}

\bibliographystyle{amsalpha}
\raggedright
\bibliography{references-2018}
\end{document}